\documentclass[11pt]{amsart}
\usepackage{dynkin-diagrams}
 \usepackage{tikz}

\usepackage{listings}[2013/08/05]
\usepackage{footmisc}

\lstdefinelanguage{GAP}{%
  morekeywords={%
    Assert,Info,IsBound,QUIT,%
    TryNextMethod,Unbind,and,break,%
    continue,do,elif,%
    else,end,false,fi,for,%
    function,if,in,local,%
    mod,not,od,or,%
    quit,rec,repeat,return,%
    then,true,until,while%
  },%
  sensitive,%
  morecomment=[l]\#,%
  morestring=[b]",%
  morestring=[b]',%
}[keywords,comments,strings]

\usepackage[T1]{fontenc}
\usepackage[variablett]{lmodern}
\usepackage{xcolor}
\lstset{
  basicstyle=\ttfamily,
  keywordstyle=\color{red},
  stringstyle=\color{blue},
  commentstyle=\color{green!70!black},
  columns=fullflexible,
}

\usepackage{lmodern}

\usepackage[all,cmtip]{xy}

\textheight=620pt
\textwidth=360pt


\usepackage[new]{old-arrows}
\usepackage{geometry}
\geometry{margin=95pt}

\usepackage{graphicx}
\usepackage{lipsum}
\newsavebox{\myimage}

\usepackage{adjustbox}

\usepackage{hyperref}

\newcommand{\sk}{\smallskip}
\newcommand{\mk}{\medskip}
\newcommand{\bk}{\bigskip}

\usepackage{shuffle}

\usepackage{bigints}

\usepackage{tikz}
\makeatletter
\makeatother

\newcommand{\xdasharrow}[2][->]{
\tikz[baseline=-\the\dimexpr\fontdimen22\textfont2\relax]{
\node[anchor=south,font=\scriptsize, inner ysep=1.5pt,outer xsep=2.2pt](x){#2};
\draw[shorten <=3.4pt,shorten >=3.4pt,dashed,#1](x.south west)--(x.south east);
}
}

\usepackage{amsmath}
\usepackage{amssymb}
\usepackage{amsthm}
\usepackage{latexsym}
\usepackage{pstricks}
\usepackage{amsbsy}
\usepackage{amscd}
\usepackage{mathrsfs}
\usepackage{tipa}
\usepackage{txfonts}
\usepackage{amsfonts}
\usepackage{graphicx}
\usepackage{tabularx}
\usepackage{listings}
\usepackage{tikz}
\usepackage{hyperref}

\usepackage{scalerel,stackengine}
\stackMath
\newcommand\reallywidehat[1]{%
\savestack{\tmpbox}{\stretchto{%
  \scaleto{%
    \scalerel*[\widthof{\ensuremath{#1}}]{\kern-.6pt\bigwedge\kern-.6pt}%
    {\rule[-\textheight/2]{1ex}{\textheight}}
  }{\textheight}%
}{0.5ex}}%
\stackon[1pt]{#1}{\tmpbox}%
}
\parskip 1ex

\newtheorem{thm}{Theorem}[section]

\newtheorem{lem}[thm]{Lemma}

\newtheorem{prop}[thm]{Proposition}
\newtheorem{fact}[thm]{Fact}

\newtheorem{rem}[thm]{Remark}
\newtheorem{defn-prop}[thm]{Definition-Proposition}

\usepackage{graphicx,epsfig,amscd,graphics,xypic}

 \newcommand{\eq}[1][r]
   {\ar@<-3pt>@{-}[#1]
    \ar@<-1pt>@{}[#1]|<{}="gauche"
    \ar@<+0pt>@{}[#1]|-{}="milieu"
    \ar@<+1pt>@{}[#1]|>{}="droite"
    \ar@/^2pt/@{-}"gauche";"milieu"
    \ar@/_2pt/@{-}"milieu";"droite"}

 \newcommand{\incl}[1][r]
  {\ar@<-0.2pc>@{^(-}[#1] \ar@<+0.2pc>@{-}[#1]}

\author[L. Pirio]{\href{mailto:luc.pirio@uvsq.fr}{Luc Pirio}\textsuperscript{$\dagger$}}
\thanks{${}^{}$\hspace{-0.4cm}\textsuperscript{$\dagger$}
\href{https://lmv.math.cnrs.fr}{Univ.\,Paris-Saclay, UVSQ, CNRS (UMR 8100), Laboratoire de Math\'ematiques de Versailles}, Versailles, France.}







\title[Webs by conics on del Pezzo surfaces and hyperlogarithmic functional identities] {Webs by conics on del Pezzo surfaces and  \\  hyperlogarithmic functional identities}

\begin{document}

\maketitle


\newcommand{\homeo}{\textup{Homeo}^+(S,M)}
\newcommand{\homeoo}{\textup{Homeo}_0(S,M)}
\newcommand{\mg}{\mathcal{MG}(S,M)}
\newcommand{\mmg}{\mathcal{MG}_{\bowtie}(S,M)}
\newcommand{\Z}{\mathbf{Z}}
\newcommand{\Q}{\mathbf{Q}}
\newcommand{\N}{\mathbf{N}}

\hyphenation{ap-pro-xi-ma-tion}

\begin{abstract}
For $d$ ranging from 2 to 6, we prove that the web by conics naturally defined on any smooth del Pezzo surface of degree $d$ carries 
an interesting functional identity whose components all are a certain antisymmetric 
hyperlogarithm of weight $7-d$. Our approach is uniform with respect to $d$ and at the end relies on classical results about the action of Weyl groups on the set of lines contained in the considered del Pezzo surface. 
 This series of `del Pezzo's hyperlogarithmic functional identities' is a natural generalization of the famous and well-know 
 3-term and 
 5-term identities of the logarithm and dilogarithm  ({\it `Abel's relation'})  which correspond to the cases when $d=6$ and $d=5$ respectively.  
This text 
 ends with a section containing several questions and some possibly interesting perspectives.
\end{abstract}

\section{\bf Introduction}
\label{S:Introduction}

In the whole paper, we work over $\mathbf C$, in the analytic and/or algebraic categorie(s).   
In the whole paper,  unless there is a specific warning, $d$ denotes a fixed integer belonging to $\{2,3,\ldots,5\}$  
and $r$ stands for $9-d$. 


\subsection{Abel's 5-term relation of the dilogarithm and del Pezzo's quintic surface}
Several authors of the XIXth and XXth centuries have independently discovered equivalent versions of the nowadays so-called Abel's 5-term relation
$$
\boldsymbol{\big(\mathcal Ab\big)}
\hspace{3cm}
R\big(x\big)-R\big(y\big)-R\bigg(\frac{x}{y}\bigg)-R\left(\frac{1-y}{1-x}\right)
+R\left(\frac{x(1-y)}{y(1-x)}\right)=0\,
\hspace{3cm} {}^{}
$$
which is identically satisfied for  any $(x,y)\in \mathbf R^2$ such that $0<x<y<1$, by the famous \href{https://mathworld.wolfram.com/RogersL-Function.html}{\it Rogers' dilogarithm} $R$ defined by 
\begin{equation}
\label{Eq:R}
R(x)= {\bf L}{\rm i}_2(x) + \frac{1}{2}\,{\rm Log}(x)\,{\rm Log}(1 - x) - \frac
{{}^{}\hspace{0.1cm} \pi^2}{6}\, 
  \end{equation}
for $x\in ]0,1[$, where ${\bf L}{\rm i}_2$ stands for the classical bilogarithm, which is the weight 2 element of the famous class of special functions, the so called `polylogarithms' ${\bf L}{\rm i}_n$, (for any $n\geq 1)$, defined 
on the unit disk ${\bf D}=\{\, z\in \mathbf C\,, \, \lvert z\lvert<1\, \}$ 
as the sums of the convergent series
   ${\bf L}{\rm i}_n(z)=\sum_{k\geq 1} z^k/k^n$. 
 
The classical logarithm ${\rm Log}$  can be seen as a version of the first polylogarithm since 
${\bf L}{\rm i}_n(z)=-{\rm Log}(1-z)$ for any $z$ in ${\bf D}$. As is well known, it satisfies `Cauchy identity' 
\begin{equation}
\label{Eq:EFA-3-terms-Log}
{\rm Log}(x)+{\rm Log}(y)-{\rm Log}(xy) = 0
\end{equation}
 for $x,y \in {\bf D}$, which is known to be fundamental in mathematics (and in particular characterizes the logarithm up to a multiplicative constant).  Abel's relation is nowadays acknowledged as another fundamental functional identity as well, one reason for this being that it appears in hence somehow connects several distinct branches of mathematics, see \cite{Zagier}.
Some versions of the higher weight polylogarithms ${\bf L}{\rm i}_n$ satisfy similar but much more involved functional equations, but this is known only for $n\leq 7$ and no general pattern of what might be the functional identities satisfied by the $n$-th polylogarithm for $n$ arbitrary has emerged yet (see \S\ref{SS:Context} below for more perspectives and references). 
\begin{center}
\vspace{-0.4cm}
$\star$
\end{center}

In this text, we describe some generalizations in weight 3,4 and 5,  of 
the 3-term and 5-term identities of the logarithm and dilogarithm respectively, 
 which formally appear as very similar to the two latter classical identities, but concern hyperlogarithms, a class of functions admitting the polylogarithms and their variants as a specific case.  
To this end, let us take a geometric viewpoint on Abel's identity 
$\boldsymbol{\big(\mathcal Ab\big)}$, or more precisely on the 
five arguments $x,y,x/y$, $(1-x)/(1-y)$ and $x(1-y)/(y(1-x))$ of $R$ in it.\footnote{The case of the 3-term logarithm identity can be considered in the same way, but it is much less telling and so we will only consider the case of Abel's equation of dilogarithm in this Introduction.} Instead of viewing them as rational functions on $\mathbf P^2$, it is actually more natural to consider their lifts to the blow up, denoted by $X_4$,  of the projective plane at the four points $[1:0:0]$, $[0:1:0]$, $[0:0:1]$ and $[1:1:1]$. Its anticanonical class $-K_{X_4}$ is very ample and induces an embedding of it  into $\mathbf P^5$ whose image 
$\varphi_{\lvert -K_{X_4} \lvert}(X_4)$ 
is  the classical quintic del Pezzo surface, denoted here by ${\rm dP}_5$.

It is well known that $X_4$ carries exactly five fibrations by conics 
$$U_1,\ldots,U_5: X_4\simeq {\rm dP}_5\longrightarrow \mathbf P^1$$ and that these precisely correspond via the blow-up $X_4\rightarrow \mathbf P^2$ to the five rational functions $\mathbf P^2 \dashrightarrow \mathbf P^1$  associated to the five arguments of $R$ in  $\boldsymbol{(\mathcal Ab\big)}$. 

 All the classical facts recalled above allow to write down Abel's identity in the following form $$
\boldsymbol{\big(\mathcal Ab_{X_4}\big)}
\hspace{5cm}
\sum_{i=1}^5 \epsilon_i \,R\big(U_i\big)=0\, ,
\hspace{6cm} {}^{}
$$
for some constants $\epsilon_1,\ldots,\epsilon_5$ equal to $1$ or $-1$, this identity holding true locally at any sufficiently generic point of $X_4$, for some suitable branches of 
Rogers' dilogarithm.\footnote{The bilogarithm ${\bf L}{\rm i}_2$ hence $R$ (considering \eqref{Eq:R}) both extend as multivalued holomophic functions on $\mathbf P^1\setminus \{0,1,\infty\}$.} 

It is a general fact that any del Pezzo surface admits a finite number of conic fibrations. 
Considering this, it is very natural to wonder whether,  for any del Pezzo surface ${\rm dP}_d$ of degree $d\leq 5$, the identity $\boldsymbol{\big(\mathcal Ab_{X_4}\big)}$ admits a generalization with respect to the 
conic fibrations ${\rm dP}_d \rightarrow \mathbf P^1$. The answer is affirmative for 
$d$ ranging from 2 to 5  and this is what we are going to explain 
 now.

\subsection{Generalization to del Pezzo's surfaces of lower degree.} 
In order to state our result, we recall basic facts about del Pezzo surfaces. 
More details and explanations will be given in \S\ref{S:Main}.

Let $d\in \{2,\ldots,6\}$ and  set $r=9-d \in \{3,\ldots,7\}$.  
The blow-up $X_r={\bf Bl}_{p_1,\ldots,p_r}(\mathbf P^2)$ of the projective plane in $r$ points is a del Pezzo surface of degree $d$.  The anticanonical class $-K_{X_r}$ is ample and the degree will be taken with respect to it.  By definition, a `conic (fibration) stucture' on $X_r$ is the equivalence class, up to post-composition by projective automorphisms, of morphisms $f: X_r\rightarrow \mathbf P^1$ such that $f^{-1}(z)$ is a smooth conic (that is a smooth rational curve of degree 2) for all except for 
 a finite number of $z\in \mathbf P^1$.
 \newpage

The following facts are well known: 
\vspace{-0.18cm}
\begin{itemize}
\item[$\boldsymbol{(a).}$]  the number $l_r$ of `lines' contained in $X_r$ is finite; \sk

\item[$\boldsymbol{(b).}$]  the number $\kappa_r$ of conic structures on $X_r$ is finite as well;\sk 
\item[$\boldsymbol{(c).}$]   
any conic fibration $X_r\rightarrow \mathbf P^1$ 
has exactly $r-1$ non irreducible fibers, each such reducible fiber being the union of two `lines' included in $X_r$ intersecting transversely in one point.
\end{itemize}
\vspace{-0.1cm}
The values of $l_r$ and $\kappa_r$ in function of $r$ are given in the following table: 
$$ \begin{tabular}{|c||c|c|c|c|c|}
\hline
$r$ & 3 & 4& 5  & 6 & 7 \\ \hline \hline
$l_r$ & 6 & 10 &  16  & 27 & 56  \\ \hline
$\kappa_r$ & 3 & 5 &    10 & 27 & 126\\
\hline
\end{tabular}
$$

Let $U_1,\ldots,U_{\kappa_r}: X_r\rightarrow \mathbf P^1$ be $\kappa_r$ pairwise non equivalent conic fibrations. 
They are rational first integrals for the `web by conics on $X_r$', denoted by $ \boldsymbol{\mathcal W}_{X_r}$: one has $$ \boldsymbol{\mathcal W}_{X_r}= \boldsymbol{\mathcal W}\big( U_1,\ldots,U_{\kappa_r}\big) \, .$$ 
Since two smooth fibers of two distinct conical first integrals $U_i$ and $U_j$ intersect transversely, 
the singular set of the $\kappa_r$-web 
$ \boldsymbol{\mathcal W}_{X_r}$ is supported on a union of lines. Actually, 
it can be verified that this singular set  
precisely is the locus $L_r$  of lines 
included in $X_r$ hence that $ \boldsymbol{\mathcal W}_{X_r}$ is a regular web on  $Y_r=X_r \setminus L_r$. \sk

From $\boldsymbol{(c).}$ above, we know that 
the complement $\mathfrak R_i$ of $U_i(Y_r)$ in $\mathbf P^1$ is a finite set with $r-1$ elements denoted by $\rho_i^{1},\ldots,\rho_i^{r-1}$.  One assumes that $U_i$ 
has been chosen such that one of the $\rho_i^{t}$, say $\rho_i^{r-1}$, coincides with $\infty\in \mathbf P^1$.  Then the  rational differentials $\omega_i^{t}= dz/(z-\rho_i^{t})$ for $t=1,\ldots,r-2$ form a basis of the 
space of logarithmic 1-forms on $\mathbf P^1$ with poles on $\Sigma_i$.  Then 
let $AI_i^{r-2}$ be 
the hyperlogarithm of weight $r-2$ on $Z_i=\mathbf P^1\setminus \mathfrak R_i$,  whose symbol 
$\boldsymbol{\mathcal S}(AI_i^{r-2})$
is the $\mathfrak S_{r-2}$-antisymmetrization of $ 
\Omega_i^{r-2}=
\omega_i ^1\otimes \cdots \otimes \omega_i ^{r-2}$, namely 
$$
\boldsymbol{\mathcal S}\big(AI_i^{r-2}\big)=
{\rm Asym}\big(\Omega_i^{r-2} \big) = 
\sum_{\sigma \in \mathfrak S_{r-2}}\boldsymbol{\epsilon}(\sigma) \cdot
\omega_i^{\sigma(1)}\otimes \cdots \otimes \omega_i^{\sigma(r-2)}
$$
where $\boldsymbol{\epsilon}: \mathfrak S_{r-2}\rightarrow \{\, \pm 1\, \}$ stands for the signature morphism.

Our main result is the following: 

\begin{thm}
\label{Thm:main}
There exists a
$\kappa_r$-tuple $(\epsilon_i)_{i=1}^{\kappa_r}\in \{ \pm 1 \}^{\kappa_r}$, 
 unique up to sign, 
  such that 
for any $y\in Y_r$ and for a suitable choice of the branch of the hyperlogarithm $AI_i^{r-2}$ at $y_i=U_i(y)\in \mathbf P^1$ for each $i=1,\ldots,\kappa_r$, the following functional identity 
holds true  on an open neighbourhood of $y$ in $ Y_r$: 
$$ {}^{} \hspace{-6cm}
\boldsymbol{\big({\bf H Log}(X_r)\big) }
\hspace{3.7cm}
\sum_{i=1}^{\kappa_r} \epsilon_i \,AI_i^{r-2}\big(U_i\big)=0\, .
$$
\end{thm}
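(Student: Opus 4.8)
The plan is to establish $\boldsymbol{({\bf HLog}(X_r))}$ at the level of symbols first, and then upgrade this to an identity of multivalued functions by a standard monodromy/primitive argument. The key structural input is fact $\boldsymbol{(c).}$: every conic fibration $U_i$ has exactly $r-1$ reducible fibers, each a transverse pair of lines of $X_r$, so the $r-1$ points $\rho_i^1,\dots,\rho_i^{r-1}\in\mathbf P^1$ are in canonical bijection (counted with the pair structure) with the $2(r-1)$ lines occurring in the degenerate fibers of $U_i$. Pulling back, each $1$-form $\omega_i^t=dz/(z-\rho_i^t)$ becomes, via $U_i$, a logarithmic $1$-form on $X_r$ whose polar divisor is the corresponding reducible fiber, i.e. a sum $[\ell]+[\ell']$ of two lines, up to the exact form $dU_i/U_i$ coming from $\rho_i^{r-1}=\infty$. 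Thus the symbol $\boldsymbol{\mathcal S}(AI_i^{r-2}(U_i))$ is an antisymmetrized tensor of weight $r-2$ built from logarithmic forms indexed by lines of $X_r$ appearing in the degenerate fibers of $U_i$.

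First I would fix, once and for all, the combinatorial data: the set $L_r$ of lines, the intersection pairing on $\mathrm{Pic}(X_r)$, and the Weyl group $W(E_r)$ acting on $L_r$ and permuting the $\kappa_r$ conic structures transitively. The whole point of the uniform approach is that $\boldsymbol{\mathcal S}(\sum_i\epsilon_i AI_i^{r-2}(U_i))$ is a $W(E_r)$-equivariant element (for a suitable sign character, which is exactly what forces the $\epsilon_i$) of the degree-$(r-2)$ part of the tensor algebra on the $\mathbf Q$-span of $\{d\log \ell \mid \ell\in L_r\}$, and I would want to show this element is forced to vanish. Concretely: (i) express $\boldsymbol{\mathcal S}(AI_i^{r-2}(U_i))$ as $\mathrm{Asym}$ of a tensor product of the $r-2$ forms attached to the finite reducible fibers of $U_i$, reduced modulo the relation $\sum_{t=1}^{r-1}(\text{form at }\rho_i^t)=0$ (the total residue being zero on $\mathbf P^1$), which lets me symmetrically replace the $\infty$-normalization; (ii) observe that two distinct conic structures $U_i,U_j$ share exactly the lines in their common reducible fibers, so the overlap of their associated form-sets is controlled by the incidence geometry of $L_r$; (iii) verify the vanishing $\sum_i\epsilon_i\,\mathrm{Asym}(\Omega_i^{r-2})=0$ by a $W(E_r)$-orbit computation — it suffices to check it on one $W(E_r)$-orbit of ``opposite'' collinear configurations and propagate by equivariance, or alternatively to exhibit the sum as a total antisymmetrization of something that is visibly symmetric in two of its slots. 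The existence and uniqueness-up-to-sign of $(\epsilon_i)$ comes out of the same computation: the space of $W(E_r)$-equivariant sign assignments making the symbol closed (i.e. a genuine symbol of an iterated integral, integrable in the Chen sense) is one-dimensional.

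Once the symbol vanishes, I would finish as follows. The function $F=\sum_{i=1}^{\kappa_r}\epsilon_i AI_i^{r-2}(U_i)$ is, on the simply connected neighbourhood of $y\in Y_r$ obtained after choosing branches, a holomorphic function of weight $\le r-2$ in the sense of iterated integrals; its total differential is
$$
dF=\sum_{i=1}^{\kappa_r}\epsilon_i\,AI_i^{r-3}(U_i)\cdot \frac{dU_i}{U_i-\rho_i^{1}}+\cdots
$$
more precisely $dF$ is governed by the coproduct/derivative of the symbol, so the vanishing of $\boldsymbol{\mathcal S}(F)$ implies, by an induction on the weight $r-2$ (using that $AI_i^{1}=\mathrm{Log}$ and the classical $3$-term relation as the base case, and that a weight-$w$ hyperlogarithm with vanishing symbol and vanishing weight-$(w-1)$ derivatives is constant), that $dF\equiv 0$, hence $F$ is locally constant. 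Finally the constant is $0$: choosing the branches so that all $AI_i^{r-2}$ vanish simultaneously at a conveniently chosen base point (for instance a point where several $U_i$ take the value $0$ or $1$ and the iterated integrals degenerate), or simply evaluating along a path shrinking to a point where $F\to 0$, pins the constant down. The main obstacle is step (iii): verifying the symbol-level vanishing $\sum_i\epsilon_i\,\mathrm{Asym}(\Omega_i^{r-2})=0$ uniformly in $d\in\{2,\dots,6\}$, i.e. identifying the correct signs $\epsilon_i$ as a character of (a subgroup of) $W(E_r)$ and reducing the identity to a single orbit computation — this is where the classical results on the $W(E_r)$-action on lines of del Pezzo surfaces do the real work, and where a case-by-case sanity check (the known $d=6,5$ identities, then $d=4,3,2$) is needed to make sure no sign subtlety is overlooked.
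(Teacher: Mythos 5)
Your overall architecture matches the paper's: reduce the functional identity to a tensorial (symbol-level) identity, then conclude via the injectivity of the iterated-integral map (this is exactly Proposition \ref{Prop:Symbolic}; note that once $\sum_i\epsilon_i\Omega_i=0$ the germ identity follows at once, so your monodromy/primitive/constant-pinning epilogue is unnecessary, though harmless). The genuine gap is your step (iii), which you yourself flag as the main obstacle: you never give a mechanism that actually forces $\sum_i\epsilon_i\,{\rm Asym}(\Omega_i^{r-2})=0$. ``Check it on one $W(E_r)$-orbit and propagate by equivariance'' does not work as stated: the conic classes form a single orbit, so the sum is a single equivariant (sign-twisted) vector, and equivariance of one element under a transitive action gives no vanishing whatsoever; the alternative suggestion (exhibiting the sum as an antisymmetrization of something symmetric in two slots) is not substantiated and is not what happens. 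What the paper actually does is: (a) pass from logarithmic $1$-forms to the combinatorial space $\mathbf C^{\mathcal L_r}$ via the residue map, whose injectivity uses that $X_r$ carries no nonzero holomorphic $1$-form (a step you gesture at but do not justify); and then (b) either verify by explicit linear algebra (minors of the incidence matrices $M_k$, \S\ref{SS:Elementary-Proof}) that the relation space of the vectors $\varpi_k\in\wedge^{r-2}\mathbf Z^{\mathcal L_r}$ is one-dimensional and spanned by a $\pm1$ vector, or (c) show that ${\bf hlog}^{r-2}=\sum_{\mathfrak c}\varpi_{\mathfrak c}$ spans a copy of the signature representation in the source of the equivariant map $\Psi^{r-2}$, and invoke the nontrivial fact (Fact \ref{Fact-mult-signature}, obtained by character computations) that the signature has multiplicity zero in $\wedge^{r-2}\mathbf C^{\mathcal L_r}$ for $r=4,\ldots,7$. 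That multiplicity-zero statement is the entire content of the vanishing; nothing in your sketch produces it, and it is not a formal consequence of transitivity of $W(E_r)$ on lines or conic classes (indeed it fails for $r=8$, where the multiplicity is $5$, which shows a purely soft equivariance argument cannot suffice).

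Your treatment of existence and uniqueness of $(\epsilon_i)$ is also not a proof: ``the space of $W(E_r)$-equivariant sign assignments making the symbol closed is one-dimensional'' conflates two different things. Chen integrability is automatic here because all the forms $dU_i/(U_i-\rho_i^t)$ are closed, so closedness imposes no condition; the signs are produced in the paper either as the unique (up to sign) solution of the explicit linear system in \S\ref{SS:Elementary-Proof}, or intrinsically via Lemma \ref{L:Fc-module} (the stabilizer $F_{\mathfrak c}\simeq W(D_{r-1})$ acts on $\wedge^{r-2}\mathcal U_{\mathfrak c}$ by the sign character), which is what makes the twisted sum $\sum_{\mathfrak c}\varpi_{\mathfrak c}$ well defined and sign-isotypic. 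To complete your proposal you would need to supply either the explicit computation or the representation-theoretic input (the decomposition of $\mathbf C^{\mathcal L_r}$ as a $W(E_r)$-module and the absence of the sign character in its $(r-2)$-th exterior power); as it stands the central identity is asserted, not proved.
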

A few comments are in order regarding this result. 
\begin{itemize}
\item 
Of course, $\boldsymbol{\big({\bf H Log}(X_3)\big) }$ 
 is nothing else but the logarithm identity  \eqref{Eq:EFA-3-terms-Log}
and $\boldsymbol{\big({\bf H Log}(X_4)\big) }$ coincides  with  the `geometric identity' 
$\boldsymbol{\big(\mathcal Ab_{X_4}\big)}$ hence 
is equivalent to Abel's relation $\boldsymbol{\big(\mathcal Ab\big)}$.  In constrast, the three identities $\big({\bf H Log}(X_r)\big)$ 
  for $r=5,6,7$ are entirely new although that from a  formal point of view, they  appear as rather direct generalizations to the case of lower degree del Pezzo surfaces of the very classical functional identities \eqref{Eq:EFA-3-terms-Log} and  $\boldsymbol{\big(\mathcal Ab\big)}$.
  \sk
  \item 
  Given $r$,  the suitable branch $AI_i^{r-2}$ of the hyperlogarithm  which is referred to in the statement above is defined 
 in a natural constructive way for any $i=1,\ldots,\kappa_r$. Furthermore, we  actually prove a more invariant version of this theorem (see  Proposition \ref{Prop:Invariant-Formulation} below) 
by constructing each term $\epsilon_i \,AI_i^{r-2}(U_i)$ by means of the natural action of the suitable Weyl group on only one of these, say $\epsilon_1 AI_1^{r-2}(U_1)$ which can be assumed to be $AI_1^{r-2}(U_1)$. 
\sk  \item 
Actually, there is a nice conceptual interpretation of why $\boldsymbol{\big({\bf H Log}(X_r)\big) }$ holds true in terms of the space $\mathbf C^{{\mathcal L}_r}$ freely spanned by the set of lines contained in $X_r$. This space is acted upon in a natural way by a certain Weyl group $W(E_r)$ and from a representation-theoretic perspective, the LHS  of 
$\boldsymbol{\big({\bf H Log}(X_r)\big) }$ can be interpreted as the image 
of the  signature representation ${\bf sign}_r$ of the Weyl group $W(E_r)$ into the $(r-2)$-th wedge product of $\mathbf C^{{\mathcal L}_r}$.  And the reason why $\sum_{i=1}^{\kappa_r} \epsilon_i \,AI_i^{r-2}\big(U_i\big)$ vanishes identically follows from the fact that  ${\bf sign}_r$ does not appear with positive multiplicity in the decomposition 
of $\wedge^{r-2}\mathbf C^{{\mathcal L}_r}$ into irreducible $W(E_r)$-modules (see \S\ref{SS-Key-Clef} for more details). 
\end{itemize}

\subsection{\bf Context and motivations}
\label{SS:Context}
In this paper, we establish a link between fields whose relationship was noticed before, namely the theory of del Pezzo surfaces on the one hand, and that of functional equations on the other hand. This connection is made through the webs formed by the pencils of conics carried by del Pezzo surfaces. Below we quickly discuss these different fields from a historical perspective.\footnote{The subsection \S\ref{SSS:Surfaces generated by circles and conics} is a short version of a quite longer historical text we are currently writing on the subject. An interesting historical and mathematical overview of the  theory of del Pezzo surfaces can also be found in Dolgachev's book \cite{Dolgachev}, see the Historical Notes of the 8th chapter therein.}

\subsubsection{Surfaces generated by circles and conics} 
\label{SSS:Surfaces generated by circles and conics}
Several nice examples of doubly-ruled surfaces were known since Greek Antiquity (Archimedes) but it seems that it took  more than two thousands years 
for mathematicians to give examples of surfaces generated in two different ways by a 1-dimensional family of circles. 

In his thesis under the supervision of Monge (defended in 1803), \href{https://www.persee.fr/doc/inrp_0298-5632_1994_ant_19_1_8440}{Dupin} discovered 
 a special class of non spherical surfaces enjoying the remarkable property that all their curvature lines are circles.  These surfaces were named 
 `{\it cyclides}', and their 
 discovery by Dupin led to a huge amount of researches during the XIX-th century. 
 The study of cyclides is still active nowadays. 
 
 Several authors (Liouville, Lord Kelvin, etc) discovered that cyclides can be obtained by inversions with respect to spheres from simpler surfaces, such as tori. Since inversions transform circles into circles  (possibly of infinite radius), it followed from the discovery by Villarceau (in 1848) of the circles named after him on any  revolution torus
  that  through a generic point of a cyclide pass at least four circles contained in it. 
  The fact that cyclides carry several families of circles led to 
  quite some works during the XIX-th century, however with a gap from Villarceau's discovery until the year 1863. We believe that the reason behind that is the important 1849 breakthrough by Cayley and Salmon that a generic cubic surface contains 27 lines, to each of which is naturally associated a pencil of conics.\footnote{Given a line in a cubic surface, the associated pencil is the one whose elements are the residual intersections with the cubic of the 2-planes containing the considered line (see the last remark of \S\ref{SSS:C-Kr} for more details).} Most geometers of that time have mainly focused on the case of cubics for some time, leaving aside the case of other surfaces 
  ({\it e.g.}\,cyclides\footnote{Up to some degenerated cases, the cyclid are rational quartic surfaces with a conic of double points 
``at infinity''.})  
  for about fifteen years. \sk
  
  Cayley and Salmon's discovery led to an intense activity about the geometry of cubic surfaces, by many authors: Steiner, Schl\"affli, Clebsch were among the first ones and they were followed by a lot of others. That a cubic surface carries 27 pencils of conics in general\footnote{It was known already at that time that singular real cubic surfaces may contain strictly less than 27 lines.} led several geometers to study surfaces carrying several families of conics or even of circles (for Euclidean sufaces). The first important work in this direction beyond the case of cubic surfaces is the text
   \cite{Kummer1863} in which Kummer studied cyclides from the point of view of classical algebraic geometry. In particular, he proved that the most general cyclid in the ordinary projective space carries 10 families of conics. 
   
   Interesting results were obtained about Euclidean surfaces by the french geometers Darboux and Moutard, pretty much at the same time and seemingly independently. 
   In some 1863 letters to Poncelet, most certainly  inspired by the case of cubic surfaces, Moutard indicated how to  construct  a 27-web on any sufficiently generic surface in the Euclidean 3-space. He then turned the following year to the case of cyclides and obtained  (seemingly unaware of Kummer's work) that such a surface carries 10 families of conics, a result also obtained (independently) by Darboux at the same time.\footnote{However, Darboux and Moutard were dealing with real surfaces in the Euclidean 3-space $\mathbb E^3$ and it turns out that  the maximal number of circles included in a  given 
   cyclide  of  $\mathbb E^3$  and passing through one of its generic points  is 6.} 
Following Kummer's and Darboux-Moutard's works, many authors investigated surfaces carrying several families of conics or of circles, using an algebraic geometry approach or within the perspective of the Gaussian theory of surfaces in the Euclidean 3-space.  The most relevant ones considering the purpose of the present work are those which are part of the first approach  and some of the most famous contributors along it are the following geometers (in addition to Cayley, Salmon and Kummer already cited above and according to  a rough chronological order):   Steiner, Schl\"afli, Clebsch, Cremona, Sturm, and Del Pezzo.

Among the numerous works on the subject  produced at this time, 
some papers by Clebsch  are worth mentioning,   such as the famous 1866 paper 
\cite{Clebsch1866} in which he established that a general cubic surface is `{\it representable in the plane}' via a linear system of cubic curves passing through six fixed base points in general position. This allowed him to give very concrete `plane interpretations' of some particular curves on the corresponding cubic, such as lines or pencils of conics. 
He dealt with the case of cyclides in the same way
 the case of cyclides two years later  in \cite{Clebsch1868} and its approach 
 was taken up by many others after him, even by some differential geometers.\footnote{{\it E.g.}\,see the papers \cite{DarbouxdP5, DarbouxdP4} by Darboux, in which he uses Clebsch's algebraic approach to investigate the cases corresponding to (generic projections in $\mathbf P^3$ of the) del Pezzo surfaces of degree 4 and 5 in $\mathbf P^4$ and $\mathbf P^5$ respectively.} 

Another important step which happened about twenty years later is due to Segre and Veronese who  independently, in \cite{Segre1884} and \cite{Veronese1884}, 
realized then  proved that generically the cyclides considered in the ordinary projective space actually  are projections from a generic point of the surfaces in $\mathbf P^4$ obtained as the intersections  of two hyperquadrics in this four-dimensional space. 

Many results previously obtained by others are gathered, generalized and organized in a more conceptual way in del Pezzo's famous 1887 paper \cite{DelPezzo1887}.  In it, Del Pezzo studied and classified the algebraic surfaces of degree $d$ in $\mathbf P^d$. He proved that necessarily $n\leq 9$ and gave a uniform way to `represent such a surface on the plane' by means of a linear system of cubic curves with $9-d$ fixed base points. Such surfaces are named after del Pezzo thanks to his foundational paper and have since been studied by many classical and recent authors, from several perspectives (geometric or arithmetic for instance). 

The existing literature on del Pezzo surfaces is so huge that it is impossible to discuss the subject in its whole here.  We will only discuss below a few recent 
 papers about surfaces carrying covering families of circles or conics and where the emphasis is on this property. The study of circled surfaces (which are surfaces containing at least one Euclidean circle through any of their general points) has undergone a certain revival since the beginning of the 1980s, perhaps triggered by a nice article by Blum \cite{Blum} in which he revisits earlier and even classical statements about cyclides with a modern and rigorous approach.  Several papers by differential geometers with links to the theory of cyclides  and of circled surfaces have appeared since the early 1980s and starting from the beginning of 1990s, several people working in CAGD\footnote{CAGD is the acronym for `Computer Aided Graphic Design'.} became interested in these topics and started to  publish many papers on them.  The study of cyclides and more generally of circled surfaces is still active nowadays, in applied geometry as well as in pure differential geometry.  
 
Among the recent papers published in this area which are interesting considering the purpose of the current text, we would like to mention \cite{Schicho} where Schicho classified the complex projective surfaces in $\mathbf P^3$ carrying several covering families of conics. With his PhD student Lubbes, they investigated 10 years later in  \cite{LubbesSchicho}
 the problem of  classifying  pairs $(X,\mathcal F)$ where $X$ is a rational surface in a 
 projective space $\mathbf P^r$ and $\mathcal F$ is an irreducible minimal 
 covering family of generically  irreducible rational curves.\footnote{The adjective `minimal' refers here to the degree of the general element of $\mathcal F$  which is supposed to be as small as possible.} Lubbes has published since several  papers, 
 either about circled Euclidean surfaces ({\it e.g.}\,\cite{Lubbes2021a}) or about 
 the description and classification of minimal families of rational curves on algebraic surfaces embedded in a projective space.  For instance,  he described   
 all the minimal families of rational curves on complex weak del Pezzo surfaces
in \cite{Lubbes2014} and treated the real version of this problem a few years later in \cite{Lubbes2019}.
In particular, Lubbes described and classified  the 
 minimal families of conics on 
weak del Pezzo surfaces which are not induced by a fibration.  
That such minimal families exist\footnote{According to Lubbes, 
 that such `non-fibration minimal families' exist should be attributed to his PhD advisor J. Schicho.}  is a bit surprising and 
 is  interesting, especially from the perspective of web geometry.

\subsubsection{Webs of conics on projective surfaces} 
The objects considered below are the same as in the preceding subsection, but the perspective is different: 
 in the preceding subsection, the focus was on projective surfaces carrying several pencils of conics. Here, our interest concerns the webs which are formed by the pencils of conics on such a surface. \sk

Web-geometry has been linked to the theory of embedded surfaces since its very beginning. For instance and as is well-known, the birth of web-geometry is attributed to the discovery by Thomsen in \cite{Thomsen} of the fact that the isotherm-asymptotic surfaces in $\mathbf P^3$ can be characterized as those whose Darboux's 3-web is hexagonal.\footnote{See \S1.1.1 and \S4.2.1.2. in \cite{PThese} for more details.} However, it seems that the first web-geometers around Blaschke\footnote{Web geometry was developed by a group of geometers leaded by Blaschke, at Hamburg  in the years 1927-1936.} did not consider much webs by conics on projective surfaces, even if it was already known at the end of the 1930's that maximal rank planar webs naturally live on projective surfaces.\footnote{See \cite[Chap.\,8]{PThese} for more details.}  As far as we know, the only paper  of this period where such a web is considered is \cite{Burau} in which Burau  studied the web by conics on a cubic surface. He determined  which of the 3-subwebs of this 27-web are hexagonal and proved that any 27-web with the same set of hexagonal 3-subwebs actually is equivalent to a web by conics on a cubic surface.  
Despite its importance in the constitution of one of the main problems in the geometry of webs (namely, that of classifying exceptional webs), the fact that Bol's web naturally lives on the quintic del Pezzo surface in $\mathbf P^5$ does not seem to have appeared in the literature until the  paper \cite{Burau2}, by Burau again,  but dating of 1966 that is 30 years after his first paper on the web by conics on cubic surfaces was published.

From the end of the 1930's until the beginning of the XXI-th century, webs were studied by different groups\footnote{See the  appendix of \cite{PP} for a short historical overview of web geometry.} but only very few connections with the world of algebraic projective varieties were made. In addition to Burau's second paper just cited, one has first to mention the attempt by Chern and Griffiths to establish the algebraization of webs of codimension 1 with maximal rank. The other work which should be cited is Damiano's paper \cite{Damiano} where for any $n\geq 2$, he applied Gelfand-MacPherson theory of {\it `generalized dilogarithm forms'} to the study of $(n-1)$-th abelian relations of the curvilinear web on $\mathcal M_{0,n+3}$ defined by the $n+3$ forgetful maps $\mathcal M_{0,n+3}\rightarrow \mathcal M_{0,n+2}$.\footnote{However there was an error in \cite{Damiano}. 
It has been explained and corrected in  our recent paper \cite{PirioM0n+3}.} Although both contain serious errors, the works of Chern-Griffiths on the one hand and of Damiano on the other hand, are certainly the main reasons of the renewal of the study of webs with maximal rank from the mid 1990's onwards. However, if many important results appeared on this problem since, the specific case of webs by conics on projective surfaces was not really considered by the people working in this direction (H\'enaut, Mar\`in, Pereira, Pirio,  Tr\'epreau to name a few).  

The new interest for cyclides mentioned above led several authors 
to look more in depth at the webs by circles they carry.  They were motivated by the geometric study of cyclides in order to find applications in CAGD, but also by some links with a famous problem posed long ago by Blaschke and Bol, about the determination of the 3-webs by circles which are hexagonal  ({\it cf.}\,\cite[p.\,31]{BB}).  
 This problem is still open and has  given rise to several papers, many in relation with cyclides. A nice example of such a paper is  \cite{PSS}: in it, among other things (such as many nice pictures of webs by circles  on cyclides),  the authors  provide a complete classification of all possible hexagonal webs
of circles on Darboux cyclides. This results motivated several researchers to consider  other cases from the same perspective. 
 The last and most interesting work in this direction is the recently published paper \cite{Lubbes2021b} in which Lubbes studies webs by minimal families of rational curves on real surfaces $S$ in projective spaces. Several interesting results are obtained such as a combinatorial criterion ensuring the hexagonality of any 3-web formed by such families of curves  on $S$ from which one gets a classification (up to linear projections) of the real surfaces $S\subset \mathbf P^n$ covered by a hexagonal 3-web of conics.

A notable point about all the webs by rational curves of minimal degree appearing in the literature mentioned above is that the unique web-geometric property considered about them is that of hexagonality of some of their 3-subwebs, which is quite specific.  At the exception of Bol's web on the del Pezzo quintic surface which was known to carry the 5-term identity of the dilogarithm, we are unaware of any previous result about primitive ARs of length strictly bigger than 3 for such webs.\footnote{The {\it length} of an AR $\boldsymbol{\Phi}$ of a web $\boldsymbol{\mathcal W}$ is the smallest integer $k$ 
such that $\boldsymbol{\Phi}$ can be considered as an AR of a $k$-subsweb of $\boldsymbol{\mathcal W}$. And $\boldsymbol{\Phi}$ is {\it primitive} if 
it cannot be written as a linear combination of ARs of strictly smaller lengths.}


\subsubsection{Functional identities of polylogarithms  and hyperlogarithms}
A  detailed historical survey with many references on  polylogarithms and the functional identities they satisfy can be found in \cite[\S2]{ClusterWebs}. We will be much more sketchy below.

The discovery that the logarithm is a function, actually a primitive of the inverse function,    and that it satisfies the functional equation  
\eqref{Eq:EFA-3-terms-Log} 
is quite ancient and goes back to the XVII-th century. It was recognized very early that this functional equation is fundamental and that  the very nature of the logarithm somehow is contained in it.\footnote{For instance, in \cite[III]{Pfaff} Pfaff wrote about  \eqref{Eq:EFA-3-terms-Log} that {\it `indoles logarithmorum hac aequatione fundamentali continetur'} (`the nature of logarithms is contained in this basic equation').}

The dilogarithm was considered by Euler and polylogarithms  of weight higher than or equal to 2 have been investigated starting from the very beginning of the XIXth century.  Many authors of that time   have independently discovered  several functional equations  satisfied by low-order polylogarithms.  Among the most famous ones from that period, one can mention (in chronological order): Spence (1809), Hill (1829), Kummer (1840), Abel (1881), Newman (1892) and Rogers (1907).  In our opinion, the most significant results obtained by these authors are the following: 
\vspace{0.1cm} 
\begin{itemize}
\item[$-$] the 5-terms identity of the dilogarithm, in particular Rogers' version which is homogeneous (does not involve logarithmic terms);\vspace{0.1cm} 
\item[$-$] the Spence-Kummer identity of the trilogarithm (discovered independently by  both); \vspace{0.1cm}
\item[$-$] Kummer's identities for the tetra- and pentalogarithm. 
\end{itemize}

All the functional equations satisfied by the weight $n$ polylogarithm 
discovered during the XIX-th century  (and possibly all known so far) 
  have the general form
\begin{equation}
\label{Eq:Equation-Lin}
\sum_{k=1}^N n_k \, {\bf L}{\rm i}_n\big( U_k \big) 
=\boldsymbol{\mathcal L}_{<n}
\end{equation}
where the $n_k$'s are integers, the $U_k$'s rational functions and 
$\boldsymbol{\mathcal L}_{<n}$ stands for a polynomial expression in terms of the form 
$ {\bf L}{\rm i}_{m_s}\big( V_s \big)$  for some rational functions $V_s$ 
and some positive integers 
 $m_s<n$.\sk

The first remarkable apparition of a dilogarithmic function outside the field of functional equations goes back to the work of Lobachevsky  who in 1836 
gave  a formula for the volume of a 3-dimensional hyperbolic orthoscheme 
as a linear combination of a  real-valued  dilogarithmic function  evaluated on some angles related to the dihedral angles of the considered hyperbolic polytope.  
Within  3-dimensional hyperbolic geometry, one can give a nice geometric explanation  (of a real global version, {\it cf.}\,\S\ref{S:single-valued}) of the 5-term relation of the dilogarithm (see \cite[\S4]{Zagier} for more details).
The links between polylogarithms and hyperbolic geometry go far beyond this case  and are still under investigation nowadays (in addition to Zagier's paper juste cited, see Goncharov (1999) or Rudenko (2020) for instance).

Rogers' version of the 5-term relation of the dilogarithm received a great attention from the members of Blaschke's school of web geometry when Bol discovered in \cite{Bol} that the web (now named after him) carrying this identity actually has maximal rank\footnote{For the record, this contradicted a previous statement by Blaschke, asserting that this web has rank 5.} but  is not linearizable, hence not algebraizable.  This result marked the birth of a special subdomain of web geometry, namely the one of classifying `{\it exceptional webs}' that is webs of maximal which however are not algebraic in the classical sense. Bol explicitly asked at the end of \cite{Bol} if it is possible to generalize the notion of abelian function in order to encompass the classical examples and the dilogarithm $R$ in a same general class of functions. Moreover, 
according to Bol (but he gave no source), such questions were already considered by Abel and motivated him to consider his version of the 5-term equation of the dilogarithm.  Some recent works by Goncharov (1995) and  
Kerr-Lewis-Lopatto (2018) show that  to some extent, Bol's question above can be answered by the affirmative.

Zagier has proved that the dilogarithm together with the 5-term identity it satisfies play a crucial role for describing some part of the third group of $K$-theory 
of a given number field $F$ (see \cite[\S4.A]{Zagier} for more details): a global single-valued version of the dilogarithm $D: \mathbf P^1\rightarrow \mathbf C$ (the `{\it Bloch-Wigner dilogarithm}', see \ref{S:single-valued}) can be interpreted as a regulator map $K_3(F) \rightarrow \mathbf R$ and the fact that $D$ satisfies the global version of 
Abel's identity allows to relate a certain part of $K_3(F)\otimes \mathbf Q$ to the group $B_2(F)$ defined as the quotient of the  $\mathbf Q$-vector space freely spanned by the elements $[z]$, for $z\in F$ distinct from $0$ and $1$ by the subspace spanned  by the elements 
$[x]-[y]-[{x}/{y}]-[(1-y)/(1-x)]+ [x(1-y)/(y(1-x))]$ for all $x,y\in F$ such that all the quantities between brackets are well defined.  Zagier conjectured that this extends to the higher $K$-theory groups of $F$  and that  for any $n\geq 2$, 
using the functional identities  of the form \eqref{Eq:Equation-Lin} satisfied by ${\bf L}{\rm i}_n$, one can cook up a group $B_n(F)$ which describes a certain part (namely the `indecomposable part') of $K_{2n-1}(F)\otimes \mathbf Q$.

Zagier's conjecture induced a strong interest for the quest of new functional equations for polylogarithms which led to several interesting outcomes.  First, in several papers, Gangl found several new such identities, all in weight less than or equal to 7. Zagier's conjecture has been proved to be true for $n=3$ by Goncharov (1995) and more recently for $n=4$ by Goncharov-Rudenko (2018). In both cases, an important step is in describing and then using  a  new functional equation satisfied by (some modified versions of) the  4-th and the 5-th order polylogarithm respectively.  
 The corresponding functional equation in the case n=3 is in 3 variables and with 22 terms and is constructed by means of projective geometry (of some configurations of points in the plane). The case $n=4$ is more sophisticated and the authors use the cluster algebras technology to build the suitable functional equation. Motivated by Zagier's conjecture and the theory of motives, Goncharov has studied polylogarithms (in several variables) in several papers and has stated several deep conjectures about them. Several researchers have worked and are still working on this topic, and several papers about (mainly multivariables) polylogarithms have appeared during the last years, some of them focusing on the functional equations they satisfied.

 In contrast to the classical polylogarithms, very little is known about the functional identities satisfied by other hyperlogarithms. If the so-called {\it `Nielsen polylogarithms'} have been studied by several classical and modern authors, the study of the functional equations they satisfied is very recent, see 
 \cite{CGR} (published in 2021).
%
\mk

Recently, polylogarithmic identities (mainly dilogarithmic indentities) have appeared in other fields such as in the theory of cluster algebras or in mirror symmetry of log Calabi-Yau complex manifolds.  Because of the so many fields where they appear and also in regard to the importance they might have (see \cite[\S2]{GoncharovGelfandSeminar} for instance),  the interest for studying the functional identities satisfied by polylogarithms is now well acknowledged. However, a general scheme about what might the basic structure of these functional identities is still lacking, except in weight 1 and 2.\footnote{Regarding the dilogarithmic identities, one of the most interesting results is quite recent: that Abel's identity is fundamental regarding all the identities of the form \eqref{Eq:Equation-Lin} when $n=2$ has been rigorously established only in 2020 (in \cite{RdJ}).}  Some authors expect that for each weight $n$, there exists a fundamental identity of the form \eqref{Eq:Equation-Lin} from which all the others might be deduced (by formal linear combination and specializations). 
For instance, in \cite{Griffiths}, Griffiths wrote the following regarding this: 
\begin{quote}
{\it We do not attempt to formulate this question precisely $-$ intuitively, we are asking whether or not for each $k$ there is an integer $n(k)$ such that there is a ``new'' $n(k)$-web of maximum rank one of whose abelian relations is a (the?) functional equation with $n(k)$ terms for the $k$-th polylogarithm ${\bf L}{\rm i}_k$? Here, ``new'' means the general extension of the phenomena above for the logarithm when $k = 1$, where $n(1) = 3$, for the Bol web when $k = 2$ and $n(2) = 5$, and for the H\'enaut web\footnote{Griffiths refers here to the planar 9-web associated to the Spence-Kummer's identity of the trilogarithm, known in the litterature as 
{\it Spence-Kummer's web}  (see \cite[\S2.2.3.1]{ClusterWebs}).}
 when $k = 3$ and $n(3) = 9$.}
\end{quote}

It is reasonable to take as `the' fundamental equation of the logarithm and the dilogarithm, the identities \eqref{Eq:EFA-3-terms-Log} and $\boldsymbol{\big(\mathcal Ab\big)}$ respectively. Goncharov's 22 terms identity in 3-variables might indeed be `the' fundamental identity for the trilogarithm, and the one for the tetralogarithm might be that recently discovered by 
Goncharov and Rudenko.  
According to these authors, a uniform way to get `the' fundamental functional equations for the polylogarithms might be to look at them on the moduli spaces $\mathcal M_{0,n+3}$, moreover in relation with the cluster structure of the latter space. \mk

Our result in this text shows that there exists a very natural continuation of the identities 
\eqref{Eq:EFA-3-terms-Log} and $\boldsymbol{\big(\mathcal Ab\big)}$ in weight up to 5. 
Moreover, our description of these new functional equations is uniform with respect to the weight.  If these identities have a geometric origin, they are not related to moduli spaces of marked smooth rational curves but to del Pezzo surfaces. And most importantly, these new functional equations do not involve classical polylogarithms  but different (totally antisymmetric) hyperlogarithms, which would 
 deserve to be investigated more in depth in our opinion.

\subsection{Structure of the paper}
The sequel of the paper is organized as follows.

 In {\bf Section \S\ref{S:Preliminary-Material}}, 
we introduce the material about hyperlogarithms and del Pezzo surfaces which will be used in the sequel.   In particular, it is explained there ({\it cf.}\,Proposition \ref{Prop:Symbolic}) that functional identities satisfied by hyperlogarithms can be handled symbolically, which allows to reduce the proof of our main theorem to the verification that a certain (antisymmetric) tensorial identity holds true.  
 {\bf Section \S\ref{S:Main}} is the main section of this text and is where Theorem \ref{Thm:main} is proved. Two distinct proofs are indicated: the first,  in 
\S\ref{SS:Elementary-Proof}, 
is elementary and relies on linear algebra computations.  Our second proof, detailed  in \S\ref{S:Representation-theoretic-proof}, is more conceptual and uses  arguments and computations within the theory of representation of certain Weyl groups acting naturally on the Picard lattices of the considered del Pezzo surfaces.  
In {\bf Section \S\ref{S:Miscellaneous}}, we discuss several points related to the main result of the present text, such as the case when $r=5$ (or equivalently $d=4$) about which we make a few remarks.  
The final {\bf Section \S\ref{S:Questions}} is devoted to some perspectives or problems  which we believe are interesting and in our opinion deserve further investigations.  The paper ends with an {\bf Appendix} in which we give details about the representation theory computations  used in \S\ref{S:Representation-theoretic-proof} in the case of the quartic del Pezzo surface (which corresponds to the case $r=5$).
\subsection{\bf Acknowledgements}
During the preparation of this text, the author benefited of many enlightening exchanges with Ana-Maria Castravet. He is very grateful to her for that. 
He thanks as well Nicolas Perrin for some discussions about Lie theory and Maria Chlouveraki for her help about the use of GAP for dealing with the characters of Weyl groups. 

\subsection{\bf Warning}
This text is not intended to be published in a peer reviewed mathematical journal. 
It is a long version of a more concise text, written with Ana-Maria Castravet,  
in which all del Pezzo surfaces of degree less than or equal to 5, degree 1 included, are treated in a simple and uniform way.

\section{\bf Preliminary material: hyperlogarithms and del Pezzo surfaces}
\label{S:Preliminary-Material}
This section is devoted to the two main ingredients considered in the paper: one variable hyperlogarithms from the one hand and del Pezzo surfaces on the other hand. Everything here is classical and well-known and is gathered below mainly for the sake of completeness. No proof will be provided but some (canonical or useful in our opinion) references will be given instead. 
\subsection{\bf Hyperlogarithms and the functional they satisfy}
\label{SS:Hyperlogarithms}
We now recall some classical facts about iterated integrals  in order to introduce an algebraic tensorial formalism for dealing with ARs whose components are iterated integrals. This material is discussed 
more in depth in 
\cite[\S1.4]{ClusterWebs} to which we refer the reader for details and references. 

\subsubsection{}
We fix a finite  subset $\mathfrak B\subset \mathbf P^1$ (thought as a set of branching points) and set $H=H_{\mathfrak B}=H^0\big(\mathbf P^1, \Omega^1_{\mathbf P^1}(\mathfrak B)\big)$.
Given  a {\bf weight} $w\in \mathbf N_{>0}$ and a 
$w$-tensorial product   $\otimes_{i=1}^w \omega_i\in H^{\otimes w}$, 
its {\bf iterated integral}
along a continuous path $\gamma : [0, 1]\rightarrow \mathbf P^1\setminus \mathfrak B$,
 is defined as the complex number 
\begin{equation}
\label{E:II}
\int_{\gamma} \omega_1\otimes \cdots \otimes \omega_w = \int_{\gamma_*(\Delta_w)} p_1^*(\omega_1)
\wedge \cdots \wedge p_w^*(\omega_w)
\end{equation}
where 
$p_s$ denotes the projection $\big(\mathbf P^1\setminus \mathfrak B\Big)^w\rightarrow \mathbf P^1\setminus \mathfrak B$ onto the $s$-th factor and $\gamma_*(\Delta_w)$ stands for the image by $\gamma\times \cdots \times \gamma: [0,1]^w\rightarrow \mathbf P^1\setminus \mathfrak B$ of the standard $w$-dimensional simplex  $\Delta_w\subset [0,1]^w$.
\sk

In the case under scrutiny, it can be proved\footnote{This follows easily from   the fact that we are considering iterated integrals on spaces of dimension 1.} that an iterated integral  \eqref{E:II}
 depends only on the homotopy class of $\gamma$ 
  hence  can be viewed as a multivalued holomorphic function of $z=\gamma(1)\in \mathbf P^1\setminus \mathfrak B$
as soon as we assume that $x=\gamma(0)$ is a fixed base point.  
 Choosing an affine coordinate $\zeta$ centered at $x$ on $\mathbf P^1$, for any $z$ sufficiently close to $x$ we denote by $\gamma_z$ the path $[0,1]\ni t\mapsto \zeta^{-1}(t \zeta(z))\in \mathbf P^1$. 
 Then one obtains a well-defined $\mathbf C$-linear map 
 \begin{align*}
 \label{Eq:IIxw}
  {\rm II}^w_{x}=  {\rm II}^w_{\mathfrak B,x} \, : \hspace{0.2cm}
H^{\otimes w} \,  &  \longrightarrow \hspace{0.2cm} \mathcal O_{x} \\ 
\otimes_{i=1}^w \omega_i & \longmapsto \bigg( z \mapsto \int_{\gamma_z}  \omega_1\otimes \cdots \otimes \omega_w \bigg)
\end{align*}
which can be easily seen to be independent of the chosen affine coordinate $\zeta$. 

By definition, an element of  
$$\mathcal H_{x}^{ w}={\rm Im}\Big({\rm II}^w_{x} \Big)  \subset \mathcal O_{x}
$$ 
is (the germ at $x$ of) a {\bf hyperlogarithm of weight $\boldsymbol{w}$} (on $\mathbf P^1$, with respect to $\mathfrak B$). 
 When $x$ is assumed to be fixed, no confusion can arise hence 
 we   drop it from all the notation in what follows (we will write   ${\rm II}^w$, $\mathcal H^w$  and $L_{\boldsymbol{a}}$ instead of ${\rm II}^w_x$, 
$\mathcal H_{x}^w$ and $L_{\boldsymbol{a},x}$, etc.)\sk

  Given a basis $\underline{\nu}=(\nu_1,\ldots,\nu_s)$ of $H$ (thus $s=\lvert \mathfrak B\lvert-1$) and for a word $\boldsymbol{{a}}=a_1\cdots a_w$ of length $w$ with $a_k\in \{1,\ldots,s\}$ for all $k$, we will denote 
 ${\rm II}^w_{x} (\nu_{a_1}\otimes \cdots \otimes \nu_{a_w})$  by  $L_{\boldsymbol{a},x}$.  
One verifies easily that  $L_{\boldsymbol{a},x}$ can be characterized inductively as follows: 
it is the germ  of holomorphic function at $x$  such that $L_{\boldsymbol{a},x}(x)=0$ and satisfying 
$$dL_{\boldsymbol{a},x}= \nu_{a_1} \, L_{\boldsymbol{a}',x}$$
on a neighbourhood of $x$, where 
 $\boldsymbol{a}'$ stands for the subword of $\boldsymbol{a}$ obtained by removing its first letter from it, namely 
 $\boldsymbol{a}'= a_2\cdots a_w$. \sk

\subsubsection{}
The iterated integrals satisfy several nice properties. To state these, it is useful to introduce the following objects constructed by taking direct sums: 
\begin{equation*}
H^{\otimes \bullet}=\oplus_{w\in \mathbf N} H ^{\otimes w}\, , \qquad \mathcal H ^{\bullet}=\oplus_{w\in \mathbf N} \mathcal H^{ w}
\quad \mbox{ and } \quad 
{\rm II}^\bullet
=\oplus_{w\in \mathbf N} {\rm II}^w:\,  H^{\otimes \bullet} \rightarrow 
\mathcal H^{ \bullet}\subset \mathcal O_{x}\, . 
\end{equation*}

One denotes by $\shuffle$ the \href{https://en.wikipedia.org/wiki/Shuffle_algebra}{`shuffle product'} 
defined on the free algebra on the set of words in the alphabet $\{1,\ldots,s\}$ (or equivalently, in the alphabet in the elements $\nu_1,\ldots,\nu_s$ of the chosen basis of $H$). 
Here are interesting properties satisfied by iterated integrals that we will use in the sequel: 
\begin{enumerate} 
\item[1.]  relatively to the algebra structure on $H^{\otimes \bullet}$ induced by the shuffle product on words in $\{1,\ldots,s\}$,
the map ${\rm II}^\bullet : H^{\otimes \bullet} \rightarrow \mathcal O_{x}$  is a  morphism of complex algebras, {\it i.e.} for any two words  $\boldsymbol{a}$ and $\boldsymbol{a}'$, one has 
$
L_{\boldsymbol{a}}\, L_{\boldsymbol{a}'}= L_{ \boldsymbol{a} \shuffle \boldsymbol{a}'}
$
as germs of holomorphic functions at $x$.  It follows that 
$\mathcal H^{ \bullet}_x$ is a subalgebra of $\mathcal O_{x}$;
\sk 
\item[2.]  the iterated integrals $L_{\boldsymbol{a}}$ for 
all words $\boldsymbol{a}$, are $\mathbf C(z)$-linearly independent. Consequently,  the map ${\rm II}^\bullet : H^{\otimes \bullet} \rightarrow \mathcal H_{x}^{\bullet}$ is an isomorphism of complex algebras; 
\sk 
\item[3.] in particular,  the  word $\boldsymbol{{a}}$ (hence its length $w=w(\boldsymbol{a})$) is well-defined by 
 $L_{\boldsymbol{a}} \in \mathcal O_{x}$.  By definition,  $\boldsymbol{{a}}$ is the {\bf symbol}  of the latter (with respect to the  basis $\underline{\nu}$) and $w$ is its {\bf weight}.\sk 
 
More generally, for any iterated integral $L=\sum_{\boldsymbol{a}} \lambda_{\boldsymbol{a}} L_{\boldsymbol{a}}\in \mathcal H^{\otimes\bullet}$ (with 
$\lambda_{\boldsymbol{a}} \in \mathbf C$ non-zero for all but a finite number of words $\boldsymbol{a}$'s), one defines its {\bf weight} $\boldsymbol{w(L)}$ as $\max\{ \,   w(\boldsymbol{a})\, \lvert \, \lambda_{\boldsymbol{a}} \neq 0\,  \}$ and its {\bf symbol} (at $x$ and with respect to the basis $\underline{\nu}$) by 
$$
\boldsymbol{\mathcal S}(L)=\boldsymbol{\mathcal S}_{{\hspace{-0.05cm}}x}(L)=\sum_{w(\boldsymbol{a})=w(L)} \lambda_{\boldsymbol{a}}\cdot {\boldsymbol{a}}\, ; $$
\sk
\item[4.]  a consequence of point {1.}\,above is that $\mathcal H^{\otimes \bullet} $ is a subalgebra of 
$\mathcal O_{x}$ which is graded by the weight: for any weights $w,w'\in \mathbf N$,  one has 
$\mathcal H^{ w} \cdot  \mathcal H^{ w'} \subset   \mathcal H^{w+w'}$;
\sk
\item[5.] analytic continuation  along a path  $\gamma$ joining two points $x$ to $y$ in $\mathbf P^1\setminus \mathfrak B$   induces an injective morphism of  algebras  $\mathcal C^\gamma$ between $\mathcal H_{x}^{\bullet} $ and 
$\mathcal H_{y}^{\bullet} $ which is compatible with the filtrations associated to the gradings by the weight. In other terms,  for any weight $w\geq 0$, one has 
 $$ \mathcal C^\gamma\Big(\mathcal H_{x}^{w} \Big)\subset \mathcal H_{y}^{\leq w}=\bigoplus_{w'\leq w} \mathcal H_{y}^{ w'}\, .$$
 Moreover, the   map induced by $\mathcal C^\gamma$ between ${\rm Gr}^w \mathcal H_{x}^{\bullet}=\mathcal H_{x}^{w}$
and ${\rm Gr}^w \mathcal H_{y}^{\bullet}=\mathcal H_{y}^{w}$
is reduced to the identity, {\it i.e.} for any 
  $L_{\boldsymbol{a},x}\in \mathcal H_{x}^w$, one has 
 $$  \mathcal C^\gamma\Big(L_{\boldsymbol{a},x}\Big)-L_{\boldsymbol{a},y} \in 
  \mathcal H_{y}^{<w}=\oplus_{\tilde w<w}\mathcal H_{y}^{\tilde w}\, ; $$
\item[6.] from the preceding points, it follows  that $(i).$ for any $x\not \in \mathfrak B$, iterated integrals elements of $\mathcal H_x$ extend as global but multivalued holomorphic function on $\mathbf P^1$, with their branch loci all contained in $\mathfrak B$;  $(ii).$ 
the  symbol and the weight  of a hyperlogarithm do not depend on the base point; $(iii).$ hyperlogarithms have unipotent monodromy. 
\end{enumerate}

Because it will be useful at some points in the sequel, 
when a basis $\underline{\nu}$ of $H$ has been fixed, we will identify any 
word $\boldsymbol{a}=a_1\ldots a_w$ with the tensor product 
 $\nu_{\boldsymbol{a}}=\nu_{a_1}\otimes \cdots  \otimes \nu_{a_w}\in H^{\otimes w}$.  Hence ${\mathcal S}(L_{\boldsymbol{a}})$ will refer to the word $\boldsymbol{a}$ or to the associated tensor $\nu_{\boldsymbol{a}}$ depending on what is the most convenient for us. 

\begin{rem}
\label{Pg:Tangential-base-point}
In all the discussion above, 
the choice of the additional base-point $x$ outside $\mathfrak B$ is arbitrary in most cases hence may appear as unnatural. Another approach, encountered in many papers, is to take $x$ as one of the points of $\mathfrak B$ 
 but this requires in addition to specify  a non-trivial  (real) tangent direction $\zeta$ at $x$: one then talks  of a {\it `tangential base point'}. By iterated integrations 
of 1-forms in $H_{\mathfrak B}$  along smooth paths $\gamma:[0,1]\rightarrow \mathbf P^1$ with $
\gamma(]0,1])\subset \mathbf P^1\setminus \mathfrak B$, $\gamma(0)=x\in \mathfrak B$ and $
\gamma'(0)=\zeta$,  one defines a class of multivalued holomorphic functions  on $\mathbf P^1\setminus \mathfrak B$ which enjoys similar versions of all the properties  discussed above. 
However, a certain technical problem has to be taken into account, namely that a given symbol  $\boldsymbol{a}\in H^{\otimes \bullet}$ may be  {\it `divergent'}\footnote{The simplest example of such a divergent symbol is $\nu_0=du/u$ in case $x=0$ and $\zeta=\partial/\partial z_1$ when $z_1$ stands for the real  part of the variable $z\in \mathbf C$: for any smooth path $\gamma$ emanating from the origin with tangent vector $\zeta$ at this point, the weight 1 integral $\int_\gamma \nu_0$ is divergent, precisely because the integrand has a logarithmic singularity at the origin.\label{foufou}}. For such a symbol, it is necessary to define the iterated integral value $\int_\gamma \nu_{\boldsymbol{a}}$ by means of a certain 
 `regularization process' 
 which is well-known and just  mentioned here for the sake of completeness (for more details, see 
  \cite[\S2.1]{BanksPanzerPym}{\rm )}. 
\end{rem}

\subsubsection{}
 Most of the time, it is assumed that $\mathfrak B$ has been normalized such that $\infty$ belongs to it. In this case, the basis of $H$ one usually uses is the one formed by the $\eta_k=dz/(z-b_k)$ for 
$k=1,\ldots,\lvert \mathfrak B\lvert -1$  
 where  $k\mapsto b_k$ stands for a 1-1 labelling of the elements of $\mathfrak B\setminus \{\infty\}$.   
\sk

As a classical example, let us consider the case when $\mathfrak B$ has 3 elements. In this case, it is often assumed that $
 \mathfrak B=\{0, 1,\infty\}$, $(\omega_0,\omega_1)=(dz/z,dz/(1-z))$ is taken as a basis for $H$ and one then speaks of  {\bf polylogarithms} instead of hyperlogarithms.\footnote{This is the most common convention when $\mathfrak B$ has cardinality 3 but at least another one exists: when dealing with cluster algebras, it is more convenient to take $
\mathfrak B_{-1}=  \{0, -1,\infty\}$ as ramification locus and $(dz/z, dz/(1+z))$ for a basis of $H=H^0\big(\mathbf P^1, \Omega^1_{\mathbf P^1}({\rm Log}\,\mathfrak B_{-1})\big)$.}
 This is justified by the following fact: 
 using the `unitary horizontal tangential base point at the origin' ({\it cf.}\,Remark \ref{Pg:Tangential-base-point} above and especially footnote \ref{foufou})
 as a base-point (as is customary to do in the polylogaithmic case),  we get that 
  the iterated integral 
 associated to the weight $n$ symbol
 $$\omega_0^{\otimes (n-1)}\otimes \omega_1=(dz/z)\otimes \cdots \otimes (dz/z) \otimes \big(dz/(1-z)\big)$$ 
coincides with the $n$-th classical  polylogarithm ${\bf L}{\rm i}_n$, 
on the whole unit disk $\mathbf D$ for instance.\sk

Relatively with the chosen tangential base point, both ${\rm Log}$ and  ${\rm Log}(1-x)=-{\bf L}{\rm i}_1(x)$ can be seen as weight 1 polylogarithmic integrals on $\mathbf P^1\setminus \{0,1,\infty\}$, with associated symbols $\omega_0$ and $-\omega_1$ respectively. It follows that  Rogers's dilogarithm $R$ is an iterated integral of the same kind, of weight 2 and  whose symbol is given by 
\begin{align}
 \label{Eq:S(R)}
 \mathcal S(R)= \, &\,  \mathcal S\big({\bf L}{\rm i}_2\big)-\frac{1}{2}\mathcal S\big( {\rm Log} \cdot
 {\bf L}{\rm i}_1
 \big) 
\nonumber  
 \\ =\, &\,  \mathcal S\big({\bf L}{\rm i}_2\big)-\frac{1}{2}\mathcal S\big( {\rm Log}\big) \shuffle 
\mathcal S \big( {\bf L}{\rm i}_1
 \big) 
 \\
 =\, &\,  01-\frac{1}{2} 0\shuffle 1=01-\frac{1}{2}\big( 01+10\big)= \frac{1}{2}\big( 01-10\big)
 \nonumber  
\end{align}
where, to simplify the formulas,  we have written $0$ and $1$ instead of $\omega_0$ and $\omega_1$ respectively.

\subsubsection{}
\label{SSS:lalala}
For any $w>0$, the permutation group $\mathfrak S_w$ acts naturally on $ H^{\otimes w}$ by 
permutation of the components of the tensor products, namely 
one has 
$$\sigma\cdot (\eta_1\otimes \ldots\otimes \eta_w)=
(\eta_1\otimes \ldots\otimes \eta_w)^\sigma=
\eta_{\sigma(1)}\otimes \ldots\otimes \eta_{\sigma(w)}$$ for every $(\sigma,\eta_1\otimes \ldots,\otimes \eta_w)\in \mathfrak S_w\times H^{\otimes w}$.  
For $\nu=\nu_1\otimes \ldots \otimes \nu_s\in H^{\otimes s}$ where 
$(\nu_1,\ldots,\nu_s)$  is a given basis of $H$, 
one defines two special elements of $H^{\otimes s}$ by setting
$$
{\rm Sym}^s\big( \nu)=\frac{1}{s!}
\sum_{\sigma \in \mathfrak S_s} 
\nu_{\sigma(1)}\otimes \ldots,\otimes \nu_{\sigma(s)}
\qquad \mbox{ and }\qquad 
{\rm Asym}^s\big( \nu)=\frac{1}{s!}
\sum_{\sigma \in \mathfrak S_s}  \epsilon(\sigma)\,
\nu_{\sigma(1)}\otimes \ldots,\otimes \nu_{\sigma(s)}\, .
$$
 Up to a non zero constant, both do not depend on the choice of a basis for $H$.

 In this paper, the  hyperlogarithms we will mainly consider are those whose symbol is the antisymmetrization 
 ${\rm Asym}^s\big( \nu)$. For any $z\in \mathbf P^1\setminus \mathfrak B$, the hyperlogarithm $AI_z^s$ is defined as the holomorphic germ at $z$ 
 with this antisymmetrization for symbol: 
 $$
 AI_z^s= 
 {\rm II}^s_z\Big( {\rm Asym}^s\big( \nu\big)\Big) \in \mathcal O_{\mathbf P^1,z}\, .
 $$
 In this notation, 
  `A' and `I' refer to the words  `{\it Antisymmetric}' and `{\it Integral}' respectively.
 \sk 
 
The case when $s=2$ is already interesting, since one has ${\rm Asym}_2(a_1a_2)=(a_1a_2-a_2a_1)/2$, a symbol which is formally the same as the one of Rogers' dilogarihtm (see \eqref{Eq:S(R)}). 
Assuming that the letters $a_1$ and $a_2$ respectively correspond to the 1-forms $dz/z$ and $dz/(1-z)$ 
(which does not require any loss of generality), we get that  for any $z\in \mathbf P^1\setminus \{0,1,\infty\}$, one has 
$$ {}^{}\hspace{0.7cm}
AI_z^2(x)= -
\frac{1}{2}
\bigintsss_{z}^x
\left(
\frac{{\rm Log}\left(\frac{1-\sigma}{1-z}\right)}{\sigma} -
\frac{{\rm Log}\left(\frac{\sigma}{z}\right)}{1-\sigma}
\right)\, d\sigma
$$
for $x\in \mathbf P^1$ sufficiently close to $z$, hence $AI_z^2$ can be seen as a holomorphic version of Rogers' dilogarithm $R$, localized at $z$.

It is interesting to wonder whether $AI_z^s$ can be constructed from hyperlogarithms of lower order,  in other words whether it belongs to the subalgebra generated by the hyperlogarithms of order at most $s-1$. 
Since ${\rm II}_z^\bullet$ is an injective morphism of algebras, this is equivalent to knowing whether the symbol ${\rm Asym}^s(a_1\cdots a_s)$ can be written as a linear combination of non trivial shuffle products.

This is not the case for $s=2$.  Indeed, it is well-known that the dilogarithmic symbol 
$$\mathcal R_{a_1a_2}={\rm Asym}^2(a_1a_2)=
\frac{1}{2}\,\bigg( \, a_1a_2-a_2a_1\bigg)$$
(compare with \eqref{Eq:S(R)})  cannot be written as a linear combination 
of the four shuffles $a_i\shuffle a_j$ with $i,j=1,2$.  
The situation is  different for $s\geq 3$. 
We only discuss below the cases $s=3,4,5$  which are the ones we 
 are dealing with in this paper.

 Denoting by ${a}^{s}$ the word $a_{1}\cdots a_{s}$ for any $s$, 
one has
 \begin{align}
 \label{F:Asym-s}
{\rm Asym}^3\Big(\, {a}^3\, 
\Big)=\, & \, \frac{1}{3}\,\bigg( \, 
 a_1\shuffle \mathcal R_{a_2 a_3}-a_2\shuffle \mathcal R_{a_1a_3}
+ a_3\shuffle \mathcal R_{a_1a_2}\, 
 \bigg)
 \nonumber
 \\
{\rm Asym}^4\Big(\,
{a}^4\,
\Big)=\, & \,\frac{1}{6}\, \bigg( \, 
\mathcal R_{a_1a_2}\shuffle \mathcal R_{a_3a_4}
-\mathcal R_{a_1a_3}\shuffle \mathcal R_{a_2 a_4}
+\mathcal R_{a_1a_4}\shuffle \mathcal R_{a_2a_3} \bigg)
\\{\rm Asym}^5\Big(\,
{a}^5\,
\Big)
=\, & \,
\frac{1}{5}\, a_1\shuffle  {\rm Asym}^4\big(\,
a_2 \cdots a_5
\big)
+\frac{2}{5!} \hspace{-0.1cm}
\sum_{\sigma \in {\rm Fix}(1)}
\epsilon(\sigma) \, 
\big( a_{1}a_2\shuffle   a_3a_4a_5\big)^{\sigma}\, , 
 \nonumber
 \vspace{-0.2cm}
\end{align}
where we have set 
$
\big( a_{1}a_2\shuffle 
 a_3a_4a_5\big)^{\sigma}=
a_{\sigma(1)}a_{\sigma(2)}\shuffle 
 a_{\sigma(3)}a_{\sigma(4)}a_{\sigma(5)}
$
 for any $\sigma  \in \mathfrak S_5$ and  with  ${\rm Fix}(1)$ standing for the subgroup of 
 $\mathfrak S_5$ (isomorphic to 
$\mathfrak S_4$) formed by the permutations fixing 1.

The formulas above can be checked by elementary but tedious formal computations,  are rather concise but not written in (anti)symmetric form (except the formula for 
 ${\rm Asym}^3\big(\, {a}^3\, 
\big)$) and are certainly specializations  of a general formula holding true for 
$s$ arbitrary. Since we only deal with the cases when $s\leq 5$ in this paper, we do not 
elaborate further on the general case. \sk

From the formulas \eqref{F:Asym-s}, it follows that for $s=3,4,5$, the antisymmetric hyperlogarithm $AI_z^s$ can be expressed as a linear combination of products of hyperlogarithms of order $<s$.  Let us elaborate with more details on the case $s=3$. 
 One denotes  $b_1,\ldots,b_s,b_{s+1}$ the elements of $\mathfrak B$ with $b_{s+1}=\infty$. Then given a base point $z\in \mathbf P^1$ not in $\mathfrak B$,  $AI_z^s$ and $AI_{\hat{\imath},z}^s$ stand for the (germs of) hyperlogarithms at $z$ whose symbols are  
 the antisymmetrizations of 
$\otimes_{k=1}^s d{\rm Log}(x-b_k)$ and $\otimes_{k\neq i}^s d{\rm Log}(x-b_k)$ respectively.  Then the  formula for ${\rm Asym}^3\big(\, {a}^3\, 
\big)$ in \eqref{F:Asym-s} is equivalent to the fact that 
the following equality holds true for any $x$ sufficiently close to $z$ on the projective line: 
  \begin{equation}
  \label{Eq:AI-z-3}
 AI_z^3\big(x\big)=
 \frac13 
 \sum_{i=1}^3 (-1)^{i-1} {\rm Log}\left( 
 \frac{x-b_i}{z-b_i}
 \right)\cdot AI_{\hat{\imath},z}^{2}\big(x\big) 
 \, .
\end{equation}

\subsubsection{}
\label{SSS:M}
Until now iterated integrals were considered and discussed only on the projective line (minus a finite set) but they actually  can be considered in many more general settings, for instance 
when working on a complex manifold $M$ of arbitrary dimension with 
differential forms belonging to a fixed vector subspace $H$ of the space $H^0(M,\Omega^1_M)$ of holomomorphic 1-forms on $M$. 
For  any tensor  $ \eta=\otimes_{s=1}^w \eta_s\in H^{\otimes w}$ of weight $w>0$ and for any smooth path $\gamma$ in $M$, 
 one  defines  a complex value $\int_\gamma {\eta}$ by means of formula  \eqref{E:II}.  However,  unless when ${\eta}$ satisfies some {\it integrality conditions} first given by  K.T. Chen in full generality,\footnote{These integrability conditions can be made explicit easily but there is no point for doing so  here.} the iterated integral $ \int_\gamma {\eta}$ may depend on the path $\gamma$ and not only on its homotopy class (with fixed extremities).  
A simple but interesting case when  
Chen's integrability conditions 
 are automatically satisfied is when all the elements of $H$ are closed (which happens for instance when $\dim(M)=1$). In this case, everything that has been said about dimension 1 iterated integrals generalizes  straightforwardly.  
 In particular, for any $m\in M$, there is a map
 $${\rm II}^w_m: \, H^w \rightarrow \mathcal O_{M,m}$$ which is easily seen to be an injective
 morphism of complex algebras. 
 One can thus speak of iterated integrals on $M$, 
of weights of such functions, and  of their symbols as well, but this only once  a basis for $H$ has been chosen.

\vspace{-0.2cm}
\subsubsection{} 
\label{SS:Ui:X->Ci}
Let $X$ be a projective manifold coming with 
$d$ holomorphic fibrations  $U_i: X\rightarrow C_i$ ($i=1,\ldots,d$)  over projective curves. 
We assume that there exists a hypersurface $\Sigma$ of $X$ such that the restriction of each 
$U_i$ to $Y=X\setminus Z$ is a regular submersion. Hence $C_i^*=U_i(X)$ is an open subset of $C_i$, that is 
 $C_i=C_i^*\sqcup \mathfrak B_i $ where  
$\mathfrak B_i=C_i\setminus C_i^*=U_i(\Sigma)$ is finite. 
 For any $i$, under the assumption 
 that the dimension    $s_i$ of the space of logarithmic 1-forms $\widetilde H_i=H^0\big({C}_i, \Omega^1_{{C}_i}\big({\rm Log}\, \mathfrak B_i \big)\big)$ is positive, we choose a basis $\underline{\tilde \nu}^i=({\tilde \nu}^i_1,\ldots, \tilde {\nu}^i_{s_i})$ of it. Then the pull-backs $\nu^i_k=U_i^*\big( {\tilde \nu}^i_k)$ for $k=1,\ldots,s_i$ span a space of dimension $s_i$ of rational 1-forms on $X$ whose restriction on $Y$ are holomorphic. We set 
 $$
  H=\sum_{i=1}^d H_i 
 \;   \qquad \mbox{ with }\qquad   \;
 H_i=U_i^*\big( \widetilde H_i\big)={\rm Span}_{\mathbf C} \Big( \big\langle 
 \,
 \nu^i_1,\ldots,\nu^i_{s_i}\,
\big \rangle \Big)
\qquad \mbox{for } i=1,\ldots,d\, , 
$$
all these spaces being considered as subspaces of $ H^0(Y,\Omega_Y^1)\cap \Omega^1_{\mathbf C(X)}$.
\sk

For $y\in Y$, we set $y_i=U_i(y)$ for any $i\in \{1,\ldots,d\}$.  
On $Y$, the 1-forms elements of $H$  are closed because all the $\nu^i_k$ are. 
From the  remarks in \S\ref{SSS:M},  
working with 
1-forms in $H$ on $Y$ 
and 
with
1-forms belonging to $\widetilde{H}_i$ on $C_i^*$, one gets injective morphisms of $\mathbf C$-algebras 
\begin{equation}
\label{Eq:II-II}
{\rm II}^\bullet_y : H^{\otimes\bullet} \stackrel{\sim}{\longrightarrow} 
\mathcal H_y^\bullet \hookrightarrow \mathcal O_{Y,y}
\quad  \qquad \mbox{ and }\qquad\quad
{\rm II}^\bullet_{y_i}:  {\widetilde H}_i^{\otimes \bullet} 
 \hookrightarrow \mathcal O_{C_i,y_i} 
 \qquad \big(i=1,\ldots,d\big)
\end{equation}
which enjoy the property of being compatible with respect to pull-back under $U_i$, 
{\it i.e} the relation 
\begin{equation}
\label{Eq:II o Ui*=Ui* o II}
{\rm II}^\bullet_y \circ U_i^*= U_i^* \circ {\rm II}^\bullet_{y_i}
\end{equation}
 holds true, where the map $U_i^*$ in the LHS of this equality is the 
  injective linear morphism 
$U_i^*: {\widetilde H_i}^{\otimes \bullet} \stackrel{\sim}{\longrightarrow}  {H_i}^{\otimes \bullet}\hookrightarrow {H}^{\otimes \bullet}$ induced by the standard pull-back ${\widetilde H_i}\stackrel{\sim}{\rightarrow} {H_i}: \tilde \nu^i_k\mapsto \nu^i_k=U_i^*(\tilde \nu^i_k)$. More explicitly, 
 this means that  for  any $w>0$ and any tensor $\tilde \nu\in {\widetilde H_i}^{\otimes w}$, 
 one has $$L_{U_i^*(\tilde \nu),y}=L_{\tilde \nu,y_i}\circ U_i\, .$$

\subsubsection{} 
\label{SS:Ui:X->Ci-2}
We continue to use the setting and the notations introduced just above, we just assume 
that the $U_i$'s define foliations which are pairwise transverse on $Y$ hence define
a `web' by hypersurfaces $\boldsymbol{\mathcal W}=\boldsymbol{\mathcal W}(U_1,\ldots,U_d)$ on $Y$. 
Since we will only consider this case, we assume from now on that the curves $C_i$ all are rational, {\it i.e.} for every $i=1,\ldots,d$, one has 
$C_i=\mathbf P^1$ and $\mathfrak B_i$ has at least three elements, $\infty$ being one of them.

An effective algebraic approach to build some ARs for $\boldsymbol{\mathcal W}$ can be deduced from the  fact that the algebra morphisms in \eqref{Eq:II-II} are injective together with property \eqref{Eq:II o Ui*=Ui* o II}. 
For any $w>0$,  since each $H_i=U_i^* \Big({\bf H}^0\Big(\mathbf P^1,\Omega^1_{\mathbf P^1}({\rm Log}\,\mathfrak B_i \big)\Big)\Big)$ naturally embeds into $H$, one can construct a linear map
$$
\Phi^w :  \bigoplus_{i=1}^d H_i^{\otimes w} \longrightarrow H^{\otimes w}\, , \hspace{0.1cm}
( \nu_i )_{i=1}^d \longrightarrow  \sum_{i=1}^d  \nu_i
$$
whose kernel $\boldsymbol{K}^w$ can be seen as the set of  weight $w$ hyperlogarithmic ARs for $\boldsymbol{\mathcal W}$ (with respect to the chosen first integrals $U_i$). Indeed, given $( \nu_i )_{i=1}^d\in 
\oplus_{i=1}^d H_i^{\otimes w}$, there exists a unique $d$-tuple $(\tilde \nu_i)_{i=1}^d 
\in\oplus_{i=1}^d  \widetilde H_i^{\otimes w}$ such that 
$\nu_i=U_i^*(\tilde \nu_i)$ for every $i$.  Then for any $y\in Y$, it follows easily from above that 
$( \nu_i )_{i=1}^d$  belongs to $\boldsymbol{K}^w$ if and only if $\sum_{i=1}^d L_{\tilde \nu_i,y_i}(U_i)=0$ as a holomorphic germ at $y$. 
This gives rise to a linear identification 
$$
\boldsymbol{K}^w \simeq \boldsymbol{{\mathcal H} \hspace{-0.05cm} LogAR}^w_y(\boldsymbol{\mathcal W})$$
where the RHS stands for  the space of germs at $y$ of  abelian relations for $\boldsymbol{\mathcal W}$ whose components are hyperlogarithms of weight $w$: 
$$ \boldsymbol{{\mathcal H} \hspace{-0.05cm} LogAR}^w_y(\boldsymbol{\mathcal W})=
\bigg\{\, 
\big( L_i)_{i=1}^d \in \prod_{i=1}^d \widetilde{\mathcal H}_i^{w} \, \big\lvert 
\, 
\sum_{i=1}^d L_i(U_i) \equiv 0 
\,
\bigg\}\, .
$$

Assume now that the branching loci $\mathfrak B_i$ all have the same cardinal, denoted by $s$. Then $\wedge^s H_i$ is 
a vector subspace of $H_i^{\otimes s}$ of 
dimension 1,  spanned by 
$$\Omega_i=\omega^i_1\wedge \ldots \wedge \omega_s^i={\rm Asym}^s\big(\omega^i_1\otimes \ldots \otimes \omega_s^i\big)$$
where  $(\omega_k^i)_{k=1}^s$ stands for the (fixed) basis of $H_i$ we are working with.  
For any $i$, let $\widetilde \Omega_i$ be the element of $\wedge^s \widetilde H_i$ such that $\Omega_i=U_i^*\big( \widetilde \Omega_i \big)$. 
Given  $y\in Y$, one sets $y_i=U_i(y)\in  \mathbf P^1\setminus \mathfrak B_i$ 
and one denotes by  $AI^s_{i,y_i}$ the germ of hyperlogarithm at $y_i$ 
whose symbol is $\widetilde \Omega_i$, {\it i.e.}
$$ AI^s_{i, y_i}={\rm II}_{y_i}^s\Big(\widetilde \Omega_i\Big)\in \mathcal O_{\mathbf P^1\setminus \{\mathfrak B_i\},y_i}\, .$$

The following proposition then follows immediately from the discussion above: 

\begin{prop} 
\label{Prop:Symbolic}
For $(c_i)_{i=1}^d \in \mathbf C^d$, the following conditions are equivalent: 
\begin{enumerate}
\item[1.]  one has $\sum_{i=1}^d c_i\,\Omega_i=0$ in $\wedge^s H\subset H^{\otimes s}$;
\sk
\item[2.] there exists $y\in Y$, such that $\sum_{i=1}^d c_i\,{AI}^s_{i,y_i}(U_i)=0$ as a holomorphic germ at $y$ on $Y$;
\sk
\item[3.] for any $y\in Y$, one has $\sum_{i=1}^d c_i\,{AI}^s_{i,y_i}(U_i)=0$ as a holomorphic germ at $y$ on $Y$.
\sk
\end{enumerate}
\end{prop}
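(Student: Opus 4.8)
The plan is to exploit the two layers of structure already established in the excerpt: the tensorial/symbolic layer (the injectivity of the iterated-integral morphisms ${\rm II}^\bullet_y$ and ${\rm II}^\bullet_{y_i}$, together with the pull-back compatibility \eqref{Eq:II o Ui*=Ui* o II}) and the elementary fact that analytic continuation only adds lower-weight terms (point 5 of the list in \S\ref{SS:Hyperlogarithms}). Concretely, I would prove the chain of implications $(1)\Rightarrow(3)\Rightarrow(2)\Rightarrow(1)$. The implication $(3)\Rightarrow(2)$ is trivial since $Y$ is nonempty. So the content lies in $(1)\Rightarrow(3)$ and $(2)\Rightarrow(1)$.

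For $(1)\Rightarrow(3)$: fix any $y\in Y$ and set $y_i=U_i(y)$. Each $AI^s_{i,y_i}$ is by definition ${\rm II}^s_{y_i}(\widetilde\Omega_i)$, and by \eqref{Eq:II o Ui*=Ui* o II} (in the form $L_{U_i^*(\tilde\nu),y}=L_{\tilde\nu,y_i}\circ U_i$ applied to the antisymmetrized tensor $\widetilde\Omega_i$, extended bilinearly) one has $AI^s_{i,y_i}(U_i)={\rm II}^s_y\bigl(U_i^*(\widetilde\Omega_i)\bigr)={\rm II}^s_y(\Omega_i)$ as a holomorphic germ at $y$ on $Y$. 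Hence $\sum_i c_i\,AI^s_{i,y_i}(U_i)={\rm II}^s_y\bigl(\sum_i c_i\,\Omega_i\bigr)$, and if $\sum_i c_i\,\Omega_i=0$ in $\wedge^s H\subset H^{\otimes s}$ this germ vanishes identically. Since $y$ was arbitrary, $(3)$ holds.

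For $(2)\Rightarrow(1)$: suppose $\sum_i c_i\,AI^s_{i,y_i}(U_i)=0$ as a germ at some particular $y\in Y$. By the identity just derived, this says ${\rm II}^s_y\bigl(\sum_i c_i\,\Omega_i\bigr)=0$. Now ${\rm II}^\bullet_y:H^{\otimes\bullet}\to\mathcal H^\bullet_y$ is an \emph{injective} morphism of algebras (point 2 of the list, reproduced in \eqref{Eq:II-II}), and $\sum_i c_i\,\Omega_i\in\wedge^s H\subset H^{\otimes s}$ is a weight-$s$ homogeneous tensor, so ${\rm II}^s_y$ injective on $H^{\otimes s}$ forces $\sum_i c_i\,\Omega_i=0$. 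That is condition $(1)$. One small point to spell out: the tensor $\sum_i c_i\,\Omega_i$ is genuinely homogeneous of pure weight $s$, so there is no mixing of weights and the graded pieces of ${\rm II}^\bullet_y$ act independently; this is why no lower-weight correction terms from analytic continuation can interfere.

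I do not expect a serious obstacle here — the proposition is essentially a bookkeeping consequence of the formalism set up in \S\ref{SS:Hyperlogarithms}–\S\ref{SS:Ui:X->Ci-2}. The only place requiring a little care is making sure that ${\rm II}^s_y(\Omega_i)$ really equals $AI^s_{i,y_i}\circ U_i$ as \emph{germs on $Y$ at $y$} and not merely after some restriction: this follows because $\Omega_i=U_i^*(\widetilde\Omega_i)$ lies in $H_i^{\otimes s}\subset H^{\otimes s}$ and the pull-back compatibility \eqref{Eq:II o Ui*=Ui* o II} is an identity of germs on $Y$. If desired, one can also phrase $(1)\Rightarrow(3)$ and $(2)\Rightarrow(1)$ together as the single observation that, for each fixed $y$, the composite $(c_i)_i\mapsto\sum_i c_i\,AI^s_{i,y_i}(U_i)$ factors as ${\rm II}^s_y\circ\bigl((c_i)_i\mapsto\sum_i c_i\Omega_i\bigr)$, with ${\rm II}^s_y$ injective, so its kernel is exactly $\{(c_i)_i:\sum_i c_i\Omega_i=0\}$ independently of $y$ — which gives all three equivalences at once.
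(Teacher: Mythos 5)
Your proof is correct and takes essentially the same route as the paper, which disposes of the proposition by saying it "follows immediately from the discussion above": namely, the pull-back compatibility \eqref{Eq:II o Ui*=Ui* o II} giving $AI^s_{i,y_i}(U_i)={\rm II}^s_y(\Omega_i)$ as germs at $y$, combined with the injectivity of ${\rm II}^\bullet_y$ asserted in \eqref{Eq:II-II} — exactly your factorization of $(c_i)_i\mapsto\sum_i c_i\,AI^s_{i,y_i}(U_i)$ through ${\rm II}^s_y$. The only thing worth noting is that, like the paper, you place the whole weight of the statement on that injectivity claim \eqref{Eq:II-II}, which you cite rather than reprove, so the two arguments coincide in substance.
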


\subsection{\bf Del Pezzo surfaces and root systems}
 Here we review 
very classical and nowadays well-known material about del Pezzo surfaces. 
No proofs are provided below. For some  proofs as well as for details and references, we 
refer to the book chapters \cite[Chap.IV]{Manin} or 
\cite[Chap.8]{Dolgachev}. 

\subsubsection{Notations and general facts} 
\label{SSS:dPr-properties}
Here is a list of notations and facts we will use in the sequel. 
\begin{enumerate}
\item[{\bf (a).}]  We denote by  $r$  an integer in $\{3, 4,5,6,7\}$ and $d$ stands  for $9-r\in \{2,3,4,5,6\}$.
\sk
\item[{\bf (b).}] Here $p=p_1+\cdots+p_r$ denotes a 0-cycle of degree $r$ on $\mathbf P^2$, with the $p_i$'s assumed to be in general position: no three of the $p_i$'s are aligned and  no six of them lie on a same conic.
\mk
\item[{\bf (c).}] One denotes by $\mu=\mu_r : {\rm dP}_d={\bf Bl}_p(\mathbf P^2)\rightarrow \mathbf P^2$ the blow-up of the projective plane at the $p_i$'s: it is a degree $d$ del Pezzo surface, {\it i.e} its canonical  sheaf,  denoted by $K_r=K_{X_r}$, is anti-ample  with anticanonical degree $(-K_r)^2$ equal to $d$;
\mk
\item[{\bf (d).}]  The anticanonical linear system $\lvert -K_r\lvert$ is very ample for $r\leq 6$ and one denotes 
the associated anticanonical model by $X_r=\varphi_{\lvert -K_r\lvert}(\mathbf P^2)$. It is a smooth surface of degree $d$ in $\mathbf P^d$.  For $r=7$,  $\varphi_{\lvert -K_7\lvert}: {\rm dP}_2\rightarrow \mathbf P^2$ is a covering ramified along a smooth quartic curve but   $-2K_7$ is very ample and allows to embeds ${\rm dP}_2$ into  $\mathbf P(1,1,1,2)$.  We denote by $X_7$ the image of ${\rm dP}_2$ into 
this weighted projective space. Except if there is a reason not to do so,  we will allow ourselves to write $X_r$ for ${\rm dP}_d$.
 \mk
\item[{\bf (e).}]  For $i=1,\ldots,r$, we denote by $\ell_i$ the class of the exceptional divisor $\mu^{-1}(p_i)\simeq \mathbf P^1$ in the Picard group ${\bf Pic}(X_r)$ of $X_r$.  If $h$ stands for the class of the pull-back under $\mu$ of a general line in $\mathbf P^2$, then one has ${\bf Pic}(X_r)=\mathbf Z h\oplus \bigoplus_{i=1}^r \mathbf Z \ell_i\simeq \mathbf Z^{r+1}$. Moreover, the intersection pairing $(\cdot,\cdot): {\bf Pic}(X_r)^2\rightarrow \mathbf Z$ is non degenerate with signature $(1,r)$.
 \mk 
\item[{\bf (f).}]  The canonical class of $X_r$ is given by $K_r=K_{X_r}=-3h+\sum_{i=1}^r \ell_i$. It is   such that 
$K^2_r=d$.  Its orthogonal $K_r^\perp=\big\{ \, \rho \in {\bf Pic}(X_r)\, \big\lvert \, 
(\rho\cdot K_r)=0\, \big\}$ is free of rank $r$ and spanned by the $r$ classes 
\begin{equation}
\label{Eq:Fundamental-Roots}
\rho_i=l_i-l_{i+1}  \quad \mbox{ for } \quad i=1,\ldots,r-1\qquad 
\mbox{ 
 and } \qquad \rho_r=h-\ell_1-\ell_2-\ell_3\, .
 \end{equation}
  \mk 
\item[{\bf (g).}]  
\vspace{-0.5cm}
Each of the class $\rho_i$ is such that 
$(\rho_i,K_r)=0$ and $ \rho_i^2=-2$. Together with the positive definite symmetric form $ - \big( \cdot,\cdot)\lvert_{K_r^\perp}$, the $\rho_i$'s define a root system of type $E_r$, with the convention that $E_4=A_4$ and $E_5=D_5$ (see Figure \ref{Fig:DynkinDiagram} below). 
The corresponding set of roots is $\mathcal R_r=\{ \, \rho\, \lvert \, (\rho,K_r)=0\, \mbox{ and } \, \rho^2=-2\}$ which is contained into the associated 
root space $R_r=K_r^\perp\otimes_{\mathbf Z} \mathbf R=\oplus_{i=1}^r \mathbf R\,\rho_i$. 
Endowed with (the restriction of)  $-(\cdot,\cdot)$, the latter is  a Euclidean vector space;
 \mk 
 \begin{figure}[h!]
\begin{center}
\scalebox{2.1}{
 \includegraphics{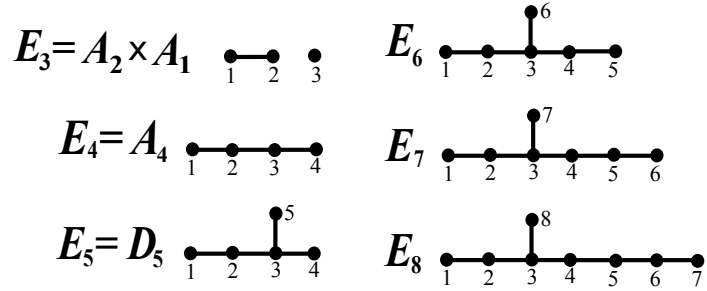}}
 \vspace{-0.1cm}
\caption{Dynkin diagrams $E_r$ (with $k$ standing for 
 $\rho_k$ for any $k=1,\ldots,r$)} 
\label{Fig:DynkinDiagram}
\end{center}
\end{figure}
\item[{\bf (h).}] For each root $\rho$, the map 
\begin{equation}
\label{Eq:s-rho}
s_\rho: R_r\rightarrow R_r, \, d\mapsto d+(d,\rho)\,\rho
\end{equation}
 is a hyperplane reflection of $R_r$, admitting $\rho$ as $(-1)$-eigenvector hence with invariant hyperplane $\rho^\perp\subset R_r$.  The subgroup of the group of orthogonal transformations of  $R_r$ 
spanned by the $s_{\rho_i}$'s for $i=1,\ldots,r$ is a Coxeter group (that is it is finite). It is the so-called `Weyl group of type $E_r$' and will be denoted by $W(E_r)$ or shortly by $W_r$.
\end{enumerate}

\subsubsection{Lines and conics} 
\label{SSS:Lines-Conics-properties}
We now discuss several notions, objects and facts related to the lines  and to the conics contained in del Pezzo surfaces. For more specific additional references, see for instance \cite{Lee1,Lee2} and the classical references (by Coxeter and Du Val) cited therein.

\begin{enumerate}
\item[{\bf (i).}] By definition, a {\it `line'} is a class $l\in {\bf Pic}(X_r)$ such that $(K_r,l)=(l,l)=-1$. 
This terminology makes sense for the following reason: from $l^2=-1$, one deduces first that the associated linear system $\lvert l\lvert$ is a singleton which is a smooth rational curve embedded in $X_r$, denoted in the same way. Second, when $-K_r$ is very ample (that is when $r \leq 6$), the condition $(K_r,l)=-1$ means that $\varphi_{\lvert -K_r\lvert}(l)$ is a projective line in the target projective space $\mathbf P^d$. 
\mk 
\item[{\bf (j).}]  The set  $\mathcal L_r$ of lines included in $X_r$ is finite and its elements all can be explicitly given. For instance, see  Table \ref{Ff:r=7} below in the case when $r=7$ and \cite[\S25.5.2]{Manin} in full generality. 
\mk 
\item[{\bf (k).}]  The Weyl group $W_r$ acts transitively on the set  of lines $\mathcal L_r$ which gives rise to a structure of $W_r$-module on $\boldsymbol{R}^{{\mathcal L}_r}$ for any ring $\boldsymbol{R}$ (such as the ring of integers $\mathbf Z$ or the field $\mathbf C$). Moreover, since $W_r$ acts by permutations on $\mathcal L_r$ and because the natural map $\boldsymbol{Z}^{{\mathcal L}_r}\rightarrow {\bf Pic}(X_r)$ is $W_r$-equivariant, it follows that 
\begin{equation}
\label{Eq:Fact-Multiplicity}
\begin{tabular}{l}
{\it both the trivial and the reflection representations appear with positive} \\ 
{\it  multiplicity in the decomposition 
of $\mathbf Z^{ {\mathcal L}_r}$ 
 into $W_r$-irreducibles.}\mk
\end{tabular}
\end{equation}
\item[{\bf (l).}]  
To simplify, one sets $s_i=s_{\rho_i}$ for $i=1,\ldots,r$.  
The subdiagram of $E_r$ formed by all its vertices at the exception of the $(r-1)$-th one (with the edge adjacent to this vertex removed as well) is isomorphic to the Dynkin  diagram $E_{r-1}$ hence 
$\langle \, s_i \, \lvert \,  i\neq r-1\, \rangle\subset W_r$
 is isomorphic to $W_{r-1}$ and will be denoted the same (a bit abusively). One verifies that, when $r>3$, this subgroup coincides with the stabilizer ${\rm Stab}(\ell_r)$ of the $r$-th exceptional line $\ell_r$ from which one gets the following formula for the cardinal  $l_r$ of $\mathcal L_r$: one has 
 $l_r=\lvert \mathcal L_r\lvert =\lvert W_r \lvert / \lvert W_{r-1}\lvert $. For $r=3$, one has 
 ${\rm Stab}(\ell_3)=\langle s_1\rangle \simeq \{\pm 1\}$ hence $l_3=\lvert \mathcal L_3\lvert =\lvert W_3 \lvert / 2=6 $.
\mk 
\item[{\bf (m).}]  Let $E_{r-1}'$ be the full subdiagram of $E_r$ formed by its $r-1$ first vertices. It is isomorphic to the Dynkin diagram $A_{r-1}$ hence the associated subgroup 
$W_{r-1}'=W(E_{r-1}')=\langle \, s_1,\ldots,s_{r-1}\,\rangle \subset W_r$ is isomorphic to the Weyl group of type $A_{r-1}$, that is to the symmetric group $\mathfrak S_{r}$. 
A natural explicit  isomorphism $W_{r-1}'\simeq \mathfrak S_{r}$ can be obtained 
by noti\-cing that 
the set $\{\ell_1,\ldots,\ell_{r}\}\subset {\bf Pic}(X_r)$ 
 is left invariant by 
$W'_{r-1}$  and that under the bijection 
$\{\ell_k\}_{k=1}^r \rightarrow \{ k\}_{k=1}^r$, 
 $\ell_k \mapsto k$, 
 $s_i$ identifies with the transposition $(i,i+1)$ for  $i=1,\ldots,r-1$. 
\mk 
\item[{\bf (m).}]  A `{\it conic class}' on $X_r$ is an element  $\mathfrak c\in {\bf Pic}(X_r)$ such that $(K_r,\mathfrak c)=(\mathfrak c,\mathfrak c)=0$.  From the second condition, one deduces that for any such class, 
the linear system $\lvert \mathfrak c\lvert$ has dimension 1 and gives rise to a  fibration
 $ \mathfrak C_{\mathfrak c}
: X_r\rightarrow 
\lvert \mathfrak c\lvert^\vee\simeq \mathbf P^1$ whose fibers are conics ({\it i.e.} rational 
curves with anticanonical degree 2);  
\mk 
\item[{\bf (n).}] The  set $\mathcal K_r$ of conic classes is finite and  can be described explicitly (see \S\ref{SSS:r=7} below in the case when $r=7$). Moreover, the Weyl group $W_r$ acts transitively on $\mathcal K_r$.  
Let $E_{r-1}''$ be the subdiagram of $E_r$ with vertices the $\rho_k$'s for $k=2,\ldots,r$.
  One has  $E_{2}''\simeq A_1+A_1$, $E_{3}''\simeq A_3$ whereas $E_{r-1}''$  is of Dynkin type $D_{r-1}$ when $r>4$. 
  Consequently, $W_{r-1}''=\langle s_2,\ldots,s_r\rangle$ 
is isomorphic to   $\mathfrak S_2\times \mathfrak S_2$ and $\mathfrak S_4$ for $r=3$ and $r=4$ respectively, and to 
 $W(D_{r-1})\simeq  \big( \mathbf Z/2\mathbf Z\big)^{r-2}\ltimes \mathfrak S_{r-1}$ when $r>4$. In any case, one verifies that 
$W_{r-1}''$ coincides with the stabilizer of 
 $h-\ell_1\in \mathcal K_r$. It follows that if $\kappa_r$ stands for the cardinal of $\mathcal K_r$, then $\kappa_3=3$, 
$\kappa_4=5$ and 
$\kappa_r=\lvert W_r\lvert /\lvert W_r''\lvert =
\lvert W_r\lvert / \big( 2^{r-2}\, (r-1)!\big)$ for $r=5,6,7$. 
\mk 
\item[{\bf (o).}]  Each conic fibration $\mathfrak C_{\mathfrak c}: X_r\rightarrow  \mathbf P^1$ admits exactly $r-1$ degenerate fibers, all of which are formed by two  elements of $\mathcal L_r$ (two lines) intersecting transversely.   Since any line in $X_r$ appears as a component of a reducible fiber of a conic fibration on $X_r$, it follows in particular that 
the `{\it conical web}' on $X_r$, which by definition is the web by conics $\boldsymbol{\mathcal W}_{X_r}= \boldsymbol{\mathcal W}\big( \mathfrak C_{\mathfrak c}\, \lvert \, \mathfrak c \in \mathcal K_r\, \big)$, is a web by conics on $X_r$ which is regular 
on the complement $Y_r=X_r\setminus L_r$ of the union of lines $L_r=\cup_{\ell \in \mathcal L_r} \ell \subset X_r$. 
\end{enumerate}

The objects and quantities just considered above regarding lines and conic classes are given in explicit form in the following table: 
\begin{table}[!h]
\scalebox{1}{\begin{tabular}{|l||c|c|c|c|c|c|}
\hline
${}^{}$ \hspace{0.4cm}  
\begin{tabular}{c}\vspace{-0.35cm}\\
$\boldsymbol{r}$
\vspace{0.13cm}
\end{tabular} & $\boldsymbol{3}$ &  $\boldsymbol{4}$ & $\boldsymbol{5}$  & $\boldsymbol{6}$ & $\boldsymbol{7}$  & $\boldsymbol{8}$\\ \hline \hline
${}^{}$ \quad \begin{tabular}{c}\vspace{-0.35cm}\\
$\boldsymbol{E_r}$
\vspace{0.1cm}
\end{tabular}
 & $A_2\times A_1$ & $A_4$ & $D_5$   & $E_6$ & $E_7$  & $E_8$  \\ \hline 
${}^{}$ \quad 
\begin{tabular}{c}\vspace{-0.35cm}\\
$\boldsymbol{E'_{r-1}}$
\vspace{0.13cm}
\end{tabular}
 &$A_2$  & $A_3$ & $A_4$   & $A_5$ & $A_6$ & $A_7$  \\ \hline 
${}^{}$ \quad
\begin{tabular}{c}\vspace{-0.35cm}\\
$\boldsymbol{E''_{r-1}}$
\vspace{0.13cm}
\end{tabular} & $A_1\times A_1$ & $A_3$ & $D_4$   & $D_5$ & $D_6$ & $D_7$  \\ \hline 
${}^{}$ \quad 
\begin{tabular}{c}\vspace{-0.35cm}\\
$\boldsymbol{W_r=W(E_{r})}$
\vspace{0.1cm}
\end{tabular}
& 
$\mathfrak S_3\times \mathfrak S_2$
 & $\mathfrak S_5$ & $\big(\mathbf Z/2\mathbf Z)^4
 \ltimes  \mathfrak S_5  
$   & $W(E_6)$ & $W(E_7)$ & $W(E_8)$  \\ \hline 
${}^{}$ \quad  
\begin{tabular}{c}\vspace{-0.35cm}\\
$\boldsymbol{\omega_r=\lvert W_r\lvert}$
\vspace{0.1cm}
\end{tabular}
& $12$ & $5!$ & $2^4\cdot
5!
$   & $2^7\cdot 3^4\cdot 5$ & $2^{10}\cdot 3^4 \cdot 5\cdot7$ & 
$2^{14}\cdot 3^5 \cdot 5^2\cdot7$
  \\ \hline 
${}^{}$ \quad 
\begin{tabular}{c}\vspace{-0.35cm}\\
$\boldsymbol{l_r={\lvert \mathcal L_r \lvert}}$
\vspace{0.1cm}
\end{tabular}
&6 & 10 &  16  & 27 & 56 & 240  \\ \hline
${}^{}$ \quad 
\begin{tabular}{c}\vspace{-0.35cm}\\
$\boldsymbol{\kappa_r={\lvert \mathcal K_r \lvert}}$
\vspace{0.13cm}
\end{tabular}
& 3 & 5 &    10 & 27 & 126 & 2160 \\
\hline
\end{tabular}}
\bk 
\caption{}
\end{table}

\begin{rem}
Seen as points in ${\bf Pic}(X_r)$, the elements of $\mathcal L_r$ are the vertices of a  semi-regular polytope, the so-called `Gosset polytope' $(r-4)_{21}$ ({\it cf.}\,\cite{Lee1}{\rm )}.  The latter enjoys several nice combinatorial and geometric properties which allow to interpret some of the objects discussed above in very concrete ways.  For instance $W_r$ is a subgroup of finite index (2) of the group of Euclidean automorphisms of 
$(r-4)_{21}$. One can also relate the conical fibrations $\mathfrak C_c: X_r\rightarrow \mathbf P^1$ to the facets  of a special kind of $(r-4)_{21}$.\footnote{More precisely, the facets of Gosset's polytope $(r-4)_{21}$ are of two distinct kinds: some are hypersimplices $\alpha_{r-1}$, the others are crosspolytopes $\beta_{r-1}$ (also called `half-measure polytopes' and noted by $h\gamma_{r-1}$). 
The former facets are associated with morphisms $X_r\rightarrow \mathbf P^2$ corresponding to the blow-up of $r$ points in general position in the projective plane, the latter are in 1-1 correspondance with the conical fibrations on $X_r$. See \cite{Lee1} for more details.}  We do believe that the fact that identity $\boldsymbol{\big({\bf H Log}(X_r)\big) }$ holds true is linked to some deep combinatorial and geometric properties of  $(r-4)_{21}$.  But since this aspect of things does not intervene in the proof we give of our main result (namely Theorem \ref{Thm:main}), we will not elaborate further on this in the sequel, the unique exception being the very allusive subsection \S\ref{SS:A conceptual interpretation?} at the very end.
\end{rem}

\subsubsection{The case $r=7$} 
\label{SSS:r=7}
By way of example, we discuss here in a very concrete way  the case $r=7$ (having in mind that all the other ones  can be treated in a completely similar way).
\sk 

Since ${\rm dP}_2=X_7$ is the total space of the blow-up $b=b_7 : X_7\rightarrow \mathbf P^1$ of the projective plane in $r=7$ points $p_1,\ldots,p_7$ in general position, our approach here (which actually is a very classical one going back  at least to Coxeter) will be given,  by means of the  push-forward by the birational morphism $b$, 
an incarnation as concrete as possible on $\mathbf P^2$ of the objects (lines and fibrations by conics on $X_r$) which are relevant to  the study of the web $\boldsymbol{\mathcal W}_{X_7}$.\sk

Let us first describe, in terms of the seven points $p_i$'s,  the images of the lines in $X_7$ by the map $b$.  There are four distinct possibilities for what can be the image $\ell'=b(\ell)$
by $b$ of a line $\ell\subset X_7$, : 
\begin{itemize}
\item[$-$] $\ell'$ is 0-dimensional if and only if it is one of the $p_i$'s. This occurs if and only if $\ell$ is one of the seven exceptional divisors $\ell_1,\ldots,\ell_7$ of $b$.  
More accurately,  what corresponds  scheme-theoretically to $\ell_i$ in the projective plane is the `first infinitesimal neighbourhood of $p_i$', that is the projectified normal bundle of $p_i$ in $\mathbf P^2$, denoted by $p_i^{(1)}\simeq  \mathbf P^1$;
\sk
\item[$-$] if $\ell$ is not one of the $\ell_i$'s, then it is an irreducible rational curve in $\mathbf P^2$ of degree $\delta'= 1,2,3$: \sk
\begin{itemize}
\vspace{-0.4cm}
\item[$\bullet$] $\delta'
 =1$ if and only if $\ell'$ is a line joining two points among the seven base points; \sk
\item[$\bullet$] $\delta'
 =2$ if and only if $\ell'$ is one of the conics passing through five of the $p_i$'s; and  \sk
\item[$\bullet$] $\delta'
 =3$ if and only if $\ell'$ is a cubic passing through  all the seven base points and with a node at one of them.   
\end{itemize} 
\end{itemize}
\sk

The above description in $\mathbf P^2$ of the lines on $X_7$ is summarized in Table  
\ref{Ff:r=7} below, where we use the following notations: $i$ and $j$ stand for two distinct elements of 
$\{1,\ldots,7\}$, $I$ is any subset of $\{1,\ldots,7\}$ of cardinality 5, for any such $I$ we set $\ell_I=\sum_{i\in I} \ell_i$ and $\boldsymbol{\ell}$ stands for $\sum_{j=1}^7 \ell_j$. 
With these notations at hand, we set $\ell_{ij}$ for the line $\langle p_i,p_j\rangle \subset \mathbf P^2$, $\mathcal C_{ij}$ stands for the conic passing through all the base points except $p_i$ and $p_j$ and we denote by $\mathcal C^3_{i}$ the (unique) rational cubic of $\mathbf P^2$ passing through all the $p_j$'s and with a node at $p_i$.  In what follows, since it will not cause any problem, we will use the notation of the first column of Table  
\ref{Ff:r=7} to designate any object (class in the Picard group or rational curve in $\mathbf P^2$) in the corresponding rows.  For instance, depending on the context, $\mathcal C^3_{i}$ will either stand for the cubic curve in $\mathbf P^2$ defined just above, or 
the class $3h-\boldsymbol{\ell}-\ell_i\in {\bf Pic}(X_7)$, or for the unique rational curve (`line') in $X_r$ belonging to the associated linear system $\lvert 3h-\boldsymbol{\ell}-\ell_i\lvert$. 
\begin{table}[!h]
 \begin{tabular}{|c|l|c|c|}
\hline
{\bf Line} 
&  {\bf Class in} $ \boldsymbol{{\bf Pic}(X_7)}$ 
 & {\bf Number of such lines} & 
\begin{tabular}{c}\vspace{-0.35cm}\\
{\bf Model  in $\boldsymbol{\mathbf P^2}$}
\vspace{0.13cm}
\end{tabular}     \\ \hline \hline
\begin{tabular}{c}\vspace{-0.35cm}\\
$\ell_{i}$
\vspace{0.13cm}
\end{tabular} & ${}^{}$ \,  $\ell_i$  & 7 & first infinitesimal neighbourhood  $p_i^{(1)}$
 \\ \hline 
\begin{tabular}{c}\vspace{-0.35cm}\\
$\ell_{ij}$
\vspace{0.13cm}
\end{tabular} & ${}^{}$ \,  $h-\ell_i-\ell_j$  & 21 & line joining  $p_i$ to $p_j$ 
 \\ \hline 
\begin{tabular}{c}\vspace{-0.35cm}\\
$\mathcal C_{ij}$
\vspace{0.13cm}
\end{tabular} & ${}^{}$ \,  $2h-\boldsymbol{\ell}+\ell_i+\ell_j$  & 21 &
\begin{tabular}{c}\vspace{-0.35cm}\\
 conic through  the $p_k$'s, $k\not \in \{ i,j\}$
\vspace{0.13cm}
\end{tabular}
 \\ \hline 
\begin{tabular}{c}\vspace{-0.35cm}\\
$\mathcal C^3_{i}$
\vspace{0.13cm}
\end{tabular} & ${}^{}$ \,  $3h-\boldsymbol{\ell}-\ell_i$  &  7  &
\begin{tabular}{c}\vspace{-0.35cm}\\
 cubic through all  the $p_l$'s with a node at $p_i$
\vspace{0.13cm}
\end{tabular}
 \\ \hline 
\end{tabular} \mk 
\caption{Lines on ${\rm dP}_2$ and the corresponding `curves' in the projective plane}
\label{Ff:r=7}
\vspace{-0.6cm}
\end{table}

Now we discuss  the conic fibrations on $X_7$ 
and their singular fibers.  For any such fibration $\mathfrak C_{\mathfrak c} : X_7\rightarrow \mathbf P^1$ (with  $\mathfrak c\in \mathcal K_7$),
we use the following notations: 
\begin{itemize}
\item[$\bullet$] one denotes by 
$\lvert \mathfrak C_{\mathfrak c} \lvert $ the linear pencil of rational curves on $\mathbf P^2$ associated with 
the dominant rational map  $\mathfrak C_{\mathfrak c} \circ b^{-1}:\mathbf P^2\dashrightarrow \mathbf P^1$; and 
\sk
\item[$\bullet$] $\mathfrak C_{\mathfrak c}^{\rm red}$ stands for the set of singular fibers of $ \mathfrak C_{\mathfrak c}$ (one has  $\lvert \mathfrak C_{\mathfrak c}^{\rm red}\lvert =r-1$, {\it cf.}\,\S\ref{SSS:Lines-Conics-properties}.{\bf (o)}).
\sk
\end{itemize}
The conic classes are of five distinct types relatively to the blow-up $b: X_7\rightarrow \mathbf P^2$, depending on their intersection number with the class $h$ (which is the one of the preimage $b^{-1}(l)$ of a generic line $l$ of $\mathbf P^2$). This quantity can take any of the values from 1 to 5 (both included) and can also be characterized as the degree of any irreducible member of the linear system $\lvert \mathfrak C_{\mathfrak c} \lvert $. \sk

Explicit descriptions of the conical classes $\mathfrak c\in \mathcal K_7 $, of the linear systems $\lvert \mathfrak C_{\mathfrak c} \lvert $, of the non irreducible fibers in the latter 
 are given in Table \ref{Table:Red-Fibers-dP2} below, where  the following notations are used:  $i,j$ and $k$ stand for pairwise  distinct elements of $[7]$;  $I$ and $J$ denote subsets of $ [7]$ of cardinal 4 and 3 respectively;  $\{i_1,i_2,i_3,i_4\}$ denotes an arbitrary labelling of the elements of $I$ and similarly
$\{j_1,j_2,j_3\}$ and $\{k_1,k_2,k_3,k_4\}$ stand for the same but for  the sets $J$ and $K=J^{\rm c}=[7]\setminus J$ respectively; and as before, $\boldsymbol{\ell}=\sum_{i=1}^7 \ell_i$ 
 is the formal sum of the exceptional divisor  of the blow up $b: X_7\rightarrow \mathbf P^2$. 
\begin{table}[!h]
 \begin{tabular}{|l|c|c|c|}
\hline
${}^{}$ \hspace{0.25cm} {\bf Conic class} $ \boldsymbol{\mathfrak c}$ 
&  {\bf Number of such} $ \boldsymbol{\mathfrak c}$ 
 &
${}^{}$ \hspace{0.5cm}  
\begin{tabular}{c}\vspace{-0.35cm}\\
{\bf Linear system $\lvert \boldsymbol{\mathfrak C_{\mathfrak c}} \lvert $ }
\vspace{0.13cm}
\end{tabular} &  
$\boldsymbol{\mathfrak C_{\mathfrak c}^{\rm red}}$  \\ \hline \hline
${}^{}$ \hspace{0.25cm} \begin{tabular}{c}\vspace{-0.35cm}\\
$h-\ell_i$\vspace{0.13cm}
\end{tabular}
& 7 &  
lines through $p_i$ &  
$\ell_{ij}+\ell_j$ 
  \\ \hline
 ${}^{}$ \, \begin{tabular}{c}\vspace{-0.35cm}\\
$2h-\sum_{i\in I}\ell_i$\vspace{0.13cm}
\end{tabular}
& 35 &  
${}^{}$ \hspace{-0.2cm}  
\begin{tabular}{l}\vspace{-0.35cm}\\
 conics through 
  the $p_i$'s, $i\in I$ 
\end{tabular}&  
${}^{}$ \, \begin{tabular}{c}\vspace{-0.35cm}\\
$\ell_{i_1i_2}+\ell_{i_3i_4}$ \vspace{0.1cm}   \\
$
\ell_{i_3} + \mathcal C_{i_1i_2}
$
 \vspace{0.13cm}
\end{tabular}
    \\ \hline
 ${}^{}$ \,  \begin{tabular}{c}\vspace{-0.35cm}\\
$3h-\boldsymbol{\ell}+\ell_i-\ell_j$\vspace{0.13cm}
\end{tabular}
 &  42 & 
 ${}^{}$ \hspace{-0.2cm}   
\begin{tabular}{l}\vspace{-0.35cm}\\
 cubics through the $p_k$'s for \\
$k\neq i$, with a node at $p_j$ 
 \vspace{0.13cm}
\end{tabular}&  
${}^{}$ \, \begin{tabular}{c}\vspace{-0.35cm}\\
$\ell_{jk} + \mathcal C_{{ik}}$  \vspace{0.1cm}   \\
 $\ell_i + \mathcal C_{j}^3 $
 \vspace{0.13cm}
\end{tabular}
   \\ \hline
${}^{}$ \,    \begin{tabular}{c}\vspace{-0.35cm}\\
$4h-\boldsymbol{\ell}-\sum_{j\in J} \ell_j$\vspace{0.13cm}
\end{tabular}
 & 35   &
 ${}^{}$ \hspace{-0.2cm}  
\begin{tabular}{l}\vspace{-0.35cm}\\
 quartics through the $p_k$'s \\
with a node at $p_j$ for $j\in J$
 \vspace{0.13cm}
\end{tabular}&   
\begin{tabular}{c}\vspace{-0.35cm}\\
$\mathcal C_{k_1k_2} + \mathcal C_{k_3k_4}$  \vspace{0.1cm}   \\
 $\ell_{j_1j_2} + \mathcal C_{j_3}^3 $
 \vspace{0.13cm}
\end{tabular}
   \\ \hline
${}^{}$ \,    \begin{tabular}{c}\vspace{-0.35cm}\\
$5h-2\boldsymbol{\ell}+\ell_i $\vspace{0.13cm}
\end{tabular}
 & 7   &
 ${}^{}$ \hspace{-0.2cm}  
\begin{tabular}{l}\vspace{-0.35cm}\\
 quintics through the $p_k$'s with \\
a node at $p_k$ except for $k=i$
 \vspace{0.13cm}
\end{tabular}&   
$\mathcal C_{ij}+\mathcal C^3_j$
   \\ \hline
\end{tabular} \bk    \vspace{0.1cm}
\caption{Conic classes on ${\rm dP}_2$ and their reducible fibers}
\label{Table:Red-Fibers-dP2}
\end{table}

\section{\bf Main section: proofs}
\label{S:Main}
The section is devoted to proving Theorem  \ref{Thm:main}. 
We give two proofs of it : the first relies on elementary (but heavy) computations of linear algebra whereas the second (albeit being computational as well, but quite less than the first proof), is representation-theoretic and uses the structure of $W_r$-module of $\mathbf C^{{\mathcal L}_r}$. 
The case when $r=3$ (equivalently  $d=6$) is special: the corresponding Weyl group is the direct product $\mathfrak S_3\times \mathfrak S_2$ and above all, it 
can be treated by hand very easily. For these reasons,  it will be left aside in what follows.
\begin{center}
\vspace{-0.3cm}$\star$
\end{center}

We continue to use the notations introduced above, recalling the most important ones for the sake of clarity: $r\in \{4,\ldots,7\}$ and $d\in  \{2,\ldots,5\}$ are integers such that  $d=9-r$. We denote by  ${\rm dP}_d=X_r={\bf Bl}_{P_r}(\mathbf P^2)$ the blow-up of the projective plane in $r$ points in general position. We label  by $U_k: X_r\rightarrow \mathbf P^1$ the fibrations by conics on $X_r$, with $k=1,\ldots,\kappa_r$. Then 
$$ \boldsymbol{\mathcal W}_{{\rm dP}_d}= \boldsymbol{\mathcal W}_{X_r}=\boldsymbol{\mathcal W}( U_1,\ldots,U_{\kappa_r})$$
is a $\kappa_r$-web by conics on $X_r$, which is regular on 
$Y_r=X_r\setminus L_r$,  where  $L_r$ stands for the union of the lines contained in $X_r$.

For any $k=1,\ldots,\kappa_r$, let $L_{r,k}$ stand for the set of lines in $X_r$ contracted (onto a points) by $U_k$. Then 
 $\mathfrak R_k=U_k(L_{r,k}) $ is a finite subset of $\mathbf P^1$ with $r-1$ elements denoted by $\rho_k^{1},\ldots,\rho_k^{r-1}$.  One assume that $U_k$ 
has been chosen such that one of the $\rho_k^t$'s, say $\rho_k^{r-1}$, coincides with the point at infinity $\infty\in \mathbf P^1$.  Then the logarithmic 1-forms $\eta_k ^t= dz/(z-\rho_k^t)$ for $t=1,\ldots,r-2$ form a basis of the 
space ${\bf H}_k={\bf H}^0\big( \mathbf P^1, \Omega^1_{\mathbf P^1}({\rm Log}\,\mathfrak R_k )\big)$.

We apply the material of \S\ref{SS:Ui:X->Ci}
and \S\ref{SS:Ui:X->Ci-2} 
to the iterated integrals of weight $r-2$ on $\mathbf P^1\setminus \mathfrak R_k $ whose symbols are the antisymmetrization of $ \eta_k ^1\otimes \cdots \otimes \eta_k ^{r-2}$ for $k=1,\ldots,\kappa_r$. For any such $k$, 
$$\Omega_k= {\rm Asym}^{r-2}\Big(  \,
 d\,{\rm Log}\,\big(\,U_i-\rho_k^{1}\,\big) \cdots \otimes  d\,{\rm Log}\,\big(\,U_i-\rho_k^{r-2}
 \,\big)\,\Big) $$ belongs to 
 $U_k^*({\bf H}_k\big)$, 
 the latter being naturally a vector subspace of $\boldsymbol{\mathcal H}_{X_r}={\bf H}^0\Big(X_r, \Omega^1_{X_r}\big({\rm Log}\, L_r\big)\Big)$.
 Considering Proposition
\ref{Prop:Symbolic},  we aim to establish that there exists $(\epsilon_k)_{k=1}^{\kappa_r}\in \{\pm 1\}^{\kappa_r}$, unique up to sign,  such that $\sum_{k=1}^{\kappa_r} \epsilon_k\,\Omega_k=0$ in $\wedge ^{r-2} \boldsymbol{\mathcal H}_{X_r}$.  At this stage, we use the fact that  $X_r$ does not carry any non trivial holomorphic 1-form.  Denoting by 
${\rm Res}_\ell$ the residue map along $\ell$ for any line $\ell\in \boldsymbol{\mathcal L}_r$, this implies that $$\oplus_{\ell\in \boldsymbol{\mathcal L}_r} 
{\rm Res}_\ell \, 
: \, \boldsymbol{\mathcal H}_{X_r} \longrightarrow \mathbf C^{ \boldsymbol{\mathcal L}_r}$$ is injective, which gives rise to an injective  $\mathbf C$-linear morphism $\wedge ^{r-2} \boldsymbol{\mathcal H}_{X_r}\longrightarrow  \wedge ^{r-2}  \mathbf C^{ \boldsymbol{\mathcal L}_r}$.  Denoting now by $\omega_k$ the image of $\Omega_k$ by the latter map, we want to prove that there exists $(\epsilon_k)_{k=1}^{\kappa_r}\in \{\pm 1\}^{\kappa_r}$, unique up to sign, such that the following relation holds true in  the wedge space $\wedge ^{r-2}  \mathbf C^{ \boldsymbol{\mathcal L}_r}$:
$$
\sum_{k=1}^{\kappa_r} \epsilon_k\,\omega_k=0
 \, .$$

\subsection{An elementary but computational proof} 
\label{SS:Elementary-Proof}
For any $k=1,\ldots,\kappa_r$, let $\mathfrak C_k^{\rm red}$ be the set of reducible conics among the fibers of the fibrations by conics $U_k : X_r\rightarrow \mathbf P^1$.  It is not difficult to give an explicit description of these sets in each case (for instance, see  the subsection \S\ref{SSS:r=7} just above for the case $r=7$).

Once for all, for each $k$,  we fix a labelling 
 $\tilde C_{k}^1,\ldots,\tilde C_{k}^{r-1}$ of the elements of 
$\mathfrak C^{red}_{k}$ and we set $C_{k}^s=\tilde C_{k}^s-\tilde C_{k}^{r-1}$ for $s=1,\ldots,r-2$. Then for any such $s$, one can write 
$$C_{k}^s=\ell_{k,1}^s+\ell_{k,2}^s-l_{k}-\hat l_{k}$$ 
for some lines $\ell_{k,1}^s,\ell_{k,2}^s, l_{k},\hat l_{k}\in \mathcal L_{r}$ (with $\tilde C_{k}^s=\ell_{k,1}^s+\ell_{k,2}^s$ and 
$\tilde C_{k}^{r-1}=l_{k}+\hat l_{k}$).  Consequently, to each reducible conic $C_{k}^s$ of $\mathfrak C_s$ corresponds a vector element of $\mathbf Z^{\boldsymbol{\mathcal L}_r}$, with only four non zero components, two with the value $+1$, the others two being equal to $-1$.  
The divisors $C_{k}^1,\ldots, C_{k}^{r-2}$ span a free $\mathbf Z$-submodule of 
$\mathbf Z^{\boldsymbol{\mathcal L}_r}$, of rank $r-2$, which 
we denote by $\mathcal U_{k}$.  Hence the wedge product 
$$\omega_{k}=C_{k}^1\wedge \cdots \wedge C_{k}^{r-2}$$ 
is a basis of the free $\mathbf Z$-module $\wedge^{r-2} \mathcal U_{k}\simeq \mathbf Z$.  The latter is naturally embedded in $\wedge^{r-2} \mathbf Z^{\boldsymbol{\mathcal L}_r}$ which is $\mathbf Z$-free of rank ${ l_r \choose r-2}$.  In order to perform explicit computations (using a computer algebra system), we label in an arbitrary but fixed way the elements of $\mathcal L_r$. This allows to identify $\mathbf Z^{\boldsymbol{\mathcal L}_r}$ with $\mathbf Z^{l_r}$, which naturally gives rise to a  $\mathbf Z$-linear isomorphism 
$$\tau^{\wedge(r-2)}  : \wedge^{r-2} \mathbf Z^{\boldsymbol{\mathcal L}_r}
\rightarrow 
\mathbf Z^{{ l_r \choose r-2}} \, .$$

As a basis for the $\mathbf Z$-module $\mathbf Z^{{ l_r \choose r-2}}$, we take the set 
of wedges $\wedge^{r-2}_I e=e_{i_1}\wedge \cdots \wedge e_{i_r}$ indexed by the set, abusively denoted by 
${ l_r \choose r-2}$ as well,  of 
tuples $I=(i_1,\ldots,i_{r-2})$ 
such that $1\leq i_1<\cdots <i_{r-1}\leq l_r$. 

For any $k=1,\ldots,\kappa_r$ and any $s=1,\ldots,r-2$, let $c^s_{k}$ be the $l_r$-tuple belonging to 
 $ \mathbf Z^{l_r}$ corresponding to $C_{k}^s$ via the chosen identification $\mathbf Z^{\boldsymbol{\mathcal L}_r}\simeq \mathbf Z^{l_r}$, and let $M_{k}\in {\rm M}_{(r-2)\times l_r}(\mathbf Z)$ be the integer matrix whose rows are the $c^s_{k}$ for $s=1,\ldots,r-2$. 
Then the vector $\varpi_{k}=\tau^{\wedge(r-2)}(\omega_{k}\big) \in \mathbf Z^{{ l_r \choose r-2}}$
  can easily seen to be the 
 one the $I$-component of which is equal to the $(r-2)\times I$-minor determinant of the matrix 
 $M_{k}$, this for any $(r-2)$-tuple $I=(i_1,\ldots,i_{r-2})$ as above.  Using a computer algebra system,\footnote{We have implemented the computational approach described here whithin Maple. The Maple worksheets we used for performing the computations are available upon request.} it is then not hard to effectively build the vectors 
 $\varpi_{k}\in \mathbf Z^{{ l_r \choose r-2}}$ for all $k=1,\ldots,\kappa_r$ and to verify that the vector space of 
 $(\epsilon_k)_{k=1}^{\kappa_r} \in \mathbf C^{ \kappa_r}$ such that $\sum_{k=1}^{\kappa_r} \epsilon_k\, \varpi_{k}=0$  has dimension 1 and is spanned by an element such that $\epsilon_k=\pm 1$ for any $k=1,\ldots, \kappa_r$.  
 Combined with Proposition \ref{Prop:Symbolic}, 
 this proves Theorem \ref{Thm:main}. 

\begin{rem}
The formal computations described above serve to verify the existence (and the unicity) of a certain hyperlogarithmic abelian relation for $\boldsymbol{\mathcal W}_{X_r}$, which is a regular web on $Y_r=X_r\setminus L_r$. Since the blow-up $b_r: X_r\rightarrow \mathbf P^2$ induces an isomophism between $Y_r$ and 
the complement of $b_r(L_r)$ in  the projective plane, one should be able to perform similar computations 
leading to the same result for the push-forward web $(b_r)_*\big( 
\boldsymbol{\mathcal W}_{X_r}
\big)$, that is some computations involving data coming from the plane $\mathbf P^2$ and the 
images by $b_r$ of the lines in $X_r$. But since $\ell_1,\ldots,\ell_r$  are contracted, one can expect that they do not play any role regarding the abelian relation under scrutiny and that only the  lines elements of $\ell\in {\mathcal L}_r\setminus \{\ell_i\}_{i=1}^r$,  which precisely are those which are not  contracted by $b_r$,  are meaningful for that regard.
 It turns out that it is the case indeed: 
 actually all the computations above can be done starting not with $\mathbf Z^{{\mathcal L}_r}$ but with 
its quotient by the rank $r$ submodule spanned by the $\ell_i$'s. This has the advantage of lightening the calculations (a little). 
 \end{rem}

%
%

\subsection{A representation-theoretic proof}
\label{S:Representation-theoretic-proof}
The proof  in the previous subsection is elementary, which is a nice feature of it. 
A less pleasant counterpart to this is that it does not tell much on what is going on.  Here we discuss another approach for proving Theorem \ref{Thm:main} which is more conceptual even if it relies  on explicit computations at some points as well.\sk

  The main new ingredient here is the natural action of  the Weyl group $W_r$ on the set of lines ${\mathcal L}_r$ contained in $X_r$. The key result we will use is the explicit description  of $\mathbf C^{{\mathcal L}_r}$ as a $W_r$-representation, which is given by the following: 
\begin{prop}
\label{Prop:Key}
 For $r=4,5,6,7,8$, one has the following decompositions of\, $\mathbf C^{{\mathcal L}_r}$ in irreducible $W_r$-modules: 
\begin{align}
\label{Eq:Decomposition-Wr}
\mathbf C^{{\mathcal L}_4}=& \, {\bf 1}\oplus V_{[41]}^4\oplus V_{[32]}^5 \nonumber \\
\mathbf C^{{\mathcal L}_5}=&  \, 
{\bf 1}\oplus V_{[4,1]}^5\oplus V_{[3,2]}^{10} 
 \nonumber   \\
\mathbf C^{{\mathcal L}_6}=&  
\, {\bf 1}\oplus V^{6,1}\oplus V^{20,2} 
\\
\mathbf C^{{\mathcal L}_7}=&   \, 
{\bf 1} \oplus V^{7,1}\oplus V^{21,3}\oplus V^{27,2}
%
%
\nonumber 
\\
\mathbf C^{{\mathcal L}_8}=&   \,  
{\bf 1} \oplus V^{8,1} 
\oplus V^{35,2} 
\oplus V^{84,4} 
\oplus V^{112,3} \, .
\mk
 \nonumber 
 \sk
\end{align}
\end{prop}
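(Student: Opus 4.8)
The plan is to decompose $\mathbf C^{{\mathcal L}_r}$ by computing its character and comparing it with the character table of the Weyl group $W_r$. First I would observe that, since $W_r$ acts on $\mathbf C^{{\mathcal L}_r}$ by permuting the finite set ${\mathcal L}_r$ of lines, the character of this permutation representation is the ``fixed-point counting'' function: for $w \in W_r$, one has $\chi_{{\mathcal L}_r}(w) = \#\{\ell \in {\mathcal L}_r \mid w\cdot \ell = \ell\}$. So the whole computation reduces to counting, for each conjugacy class of $W_r$, how many lines are fixed by a representative. Concretely, the lines are the weight vectors of a minuscule (or quasi-minuscule, in the $E_7,E_8$ cases) representation of the corresponding Lie algebra, and for each $w$ the number of fixed lines is the number of those weights lying on the reflection hyperplane arrangement fixed by $w$; this can be extracted either by hand for the small cases ($r=4,5$, where $W_4 = \mathfrak S_5$ and $W_5 = (\mathbf Z/2)^4 \rtimes \mathfrak S_5$) or with GAP using the explicit coordinates for ${\mathcal L}_r$ recalled in \S\ref{SSS:Lines-Conics-properties} (and Table \ref{Ff:r=7} for $r=7$).

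Next, having the character $\chi_{{\mathcal L}_r}$ in hand as a class function, I would expand it in the basis of irreducible characters of $W_r$ by taking inner products $\langle \chi_{{\mathcal L}_r}, \chi_V\rangle = |W_r|^{-1}\sum_{w} \chi_{{\mathcal L}_r}(w)\overline{\chi_V(w)}$. For the cases $r = 6,7,8$ this is a routine but non-trivial GAP computation against the known character tables of $W(E_6), W(E_7), W(E_8)$; for $r = 4$ it amounts to decomposing the $10$-dimensional permutation action of $\mathfrak S_5$ on $2$-subsets of $\{1,\dots,5\}$, which splits as ${\bf 1}\oplus V_{[4,1]}\oplus V_{[3,2]}$ of dimensions $1+4+5=10$, and for $r=5$ one does the analogous computation for $W(D_5)$ acting on the $16$ lines. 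In each case the multiplicities all turn out to be $0$ or $1$, and matching the dimensions of the summands against $l_r$ (i.e. $10, 16, 27, 56, 240$) pins down exactly which irreducibles occur; the labels $V^{d,k}$ (resp. $V_{[\lambda]}^d$) in the statement are then just names for these specific irreducible modules of dimension $d$, indexed in the usual way by their position in the character table (resp. by a partition of $r$ in the symmetric-group cases).

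The main obstacle I expect is purely bookkeeping rather than conceptual: one must be careful about which irreducible representation of $W(E_6), W(E_7), W(E_8)$ is being named, since these groups have many irreducibles of the same dimension (for instance $W(E_7)$ and $W(E_8)$ each have several pairs of equal-dimensional irreducibles), so the superscript $k$ in $V^{d,k}$ must encode enough data (e.g.\ the $b$-invariant or fake-degree, or an index into a fixed ordering of the character table such as GAP's) to make the decomposition unambiguous. Relatedly, for $r=7,8$ the representation whose weights are the lines is quasi-minuscule rather than minuscule (the $56$ weights of the $E_7$ minuscule representation, but for $E_8$ the lines correspond to the $240$ roots), so the fixed-point count must be carried out with the correct model of ${\mathcal L}_r$; this is exactly where the explicit tables of \S\ref{SSS:r=7} are used. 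Once the character is correctly computed and the character table correctly invoked, the decomposition \eqref{Eq:Decomposition-Wr} follows immediately, and in particular one reads off that both ${\bf 1}$ and the reflection representation $V^{r,1}$ occur with multiplicity one, consistent with \eqref{Eq:Fact-Multiplicity}.
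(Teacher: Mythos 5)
Your proposal is correct and follows essentially the same route as the paper: compute the character of the permutation module $\mathbf C^{{\mathcal L}_r}$ on representatives of the conjugacy classes of $W_r$ and decompose it against the known character table (with GAP, as in the Appendix). The only cosmetic difference is that you phrase the character values as fixed-line counts, whereas the paper computes them as traces of explicit products of the reflection matrices $M_i$ — which is the same number, since the representation is a permutation representation.
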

\sk

\begin{rem}
{\bf 1.} We use the following notations for the irreducible $W_r$-modules appearing in the above decompositions: first, ${\bf 1}$ stands for the trivial representation whereas 
all the other irreducibles are labeled with the same label as the one used  in GAP3 for the corresponding characters. 
For the classical types $A_4$ and $D_5$, these labels are the classical ones, namely are given by means 
 of partitions of  $4$  and of bipartitions of  $5$ respectively (for $D_5$, see also Table \ref{Table:CharTableWD5} and the paragraph just below in the Appendix). 
 The superscripts appearing there are the degree of the corresponding representations. For instance, the component $V_{[3,2]}^{10}$ appearing in the decomposition of $\mathbf C^{{\mathcal L}_5}$ stands for the irreducible $W(D_5)$-module associated to the bipartition $(3,2)$ of 5, which moreover is of degree 10. \sk

 For the exceptional types $E_6,E_7$ and $E_8$, the labels we use are pairs of integers $(d,e)$ with 
 $d$ standing for the degree of the considered $W(E_r)$-module and $e$ denoting its `$b$-invariant', that is  the smallest integer $m$ such that the module under scrutiny is contained in the $m$-th symmetric power 
 of the corresponding reflection representation, which  has label $(r,1)$ accordingly. 
\mk \\
\noindent {\bf 2.} The decomposition 
$\mathbf C^{{\mathcal L}_4}= {\bf 1}\oplus V_{[41]}\oplus V_{[32]}$ 
 can be found  a few lines below the exact sequence {\rm (2.1)} 
in  \cite{DolgachevFarbLooijenga}. 
There,  it is said that this can be obtained {\it `by looking at
pairs of complementary pentagons inside of the Petersen graph'}. 
It would be interesting 
to make this approach clearer/more explicit and to investigate whether it can be generalized to the lower degree del Pezzo surfaces ${\rm dP}_d$ for any $d$ ranging from $5$ to 1 or not. \mk \\
\noindent {\bf 3.} For $r\leq 7$, the set of lines ${\mathcal L}_r$ identifies in a natural way with the set of weights of  a minuscule representation of the Lie algebra of type $E_r$.\footnote{This generalizes as follows for $r=8$:  $\mathcal L_8$ identifies with the set of non zero weights of the quasi-minuscule representation of the Lie algebra $\mathfrak e_8$, which is the adjoint representation in this case.} Hence it is quite natural to wonder what the  
decomposition of $\mathbf C^{{\mathcal L}_r}$ into irreducible $W_r$-modules is.  However, we have not been able to find an answer to this question in the existing literature (except in the $r=4$ case as mentioned just above). 
\end{rem}

Our proof  of Proposition \ref{Prop:Key} goes by elementary explicit computations of the character theory of the corresponding Weyl group.  We describe succinctly our approach just below and refer to the Appendix for more details (in particular, the case when $r=5$ is fully detailed there).

\begin{proof}[Proof (sketched)] 
Given $r$ as in the statement of the proposition, we fix a labelling of the lines in $X_r$ which gives  rise to a  basis $\mathcal B_r$ of $\mathbf C^{{\mathcal L}_r}$ hence to a linear identification $\mathbf C^{{\mathcal L}_r}\simeq \mathbf C^{l_r}$. 
For any fundamental root $\rho_i$ 
 of the corresponding root system $\mathcal R_r$ (see \eqref{Eq:Fundamental-Roots}), 
 it is straightforward to compute explicitly  the matrix 
  of the reflection $s_i=s_{\rho_i}$ ({\it cf.}\,\eqref{Eq:s-rho}) in the basis 
  $\mathcal B_r$, denoted by 
  \begin{equation}
  \label{Eq:Mi}
M_{i}={\rm Mat}_{\mathcal B_r}\big(s_{i}\big) \in {\rm GL}_{l_r}\big(\mathbf Z\big)\, .
\end{equation}

Let $c_r$ stand for the number of conjugacy classes in $W_r$.  Next we consider a family of tuples $\boldsymbol{i}_k=(i_{k,1},\ldots,i_{k,m_k})\in \{1,\ldots,r\}^{m_k}$ for $k=1,\ldots,c_r$ and some positive $m_k$'s such that the group elements $s_{\boldsymbol{i}_k}=s_{i_{1,k}}\cdots s_{i_{k,m_k}}$ form a set of representatives of all the conjugacy classes of 
$W_r$. 
 We denote by  $\chi_r$   the character of the $W_r$-representation $\mathbf C^{{\mathcal L}_r}$. 
For any $k=1,\ldots , c_r$, one has 
 $$
 \chi_r\big( s_{\boldsymbol{i}_k}\big)={\rm Trace}\Big( M_{i_{1,k}}\cdots M_{i_{k,m_k}}
 \Big) \, .
 $$
Since the $r$ matrices $M_i$'s are known, one can make $\chi_r$ completely explicit. 
Using a character table of $W_r$, it is then straightforward to determine the decomposition of $\chi_r$ as a sum of  irreducible 
$W_r$-characters. 
\end{proof}

\subsection{} 
\label{SS-Key-Clef}
We now explain how to deduce our main theorem from Proposition \ref{Prop:Key}.
We use similar notations as those introduced in \S\ref{SS:Elementary-Proof} but now work over $\mathbf C$ instead of $\mathbf Z$. \mk 

Given a conic class $\mathfrak c\in \mathcal K_r$, here we denote by $\mathfrak c^{red}$ the set of $r-1$ non irreducible conics in the associated linear system $\lvert \mathfrak c\lvert \subset {\bf H}^0\big(  X_r, 
-2K_r\big)$. Each $C\in \mathfrak c^{red}$ can be written $C=l_C+\ell_C$ for two lines  $l_C,\ell_C \in \mathcal L_r$. 
Then we can define an injection $\mathfrak c^{red}\hookrightarrow \mathbf C^{{\mathcal L}_r}$ which in turn gives rise to a linear embedding $$i_{\mathfrak c}:  \mathbf C^{\mathfrak c^{red}}\hookrightarrow \mathbf C^{{\mathcal L}_r}\, .$$ The stabilizer $F_{\mathfrak c}$ of $\mathfrak c $  in $W_r$ acts by permutations on $\mathfrak c^{red}$ from which one gets a $F_{\mathfrak c}$-action on $\mathbf C^{\mathfrak c^{red}}$. Considering the action of the latter group on $\mathcal L_r$ induced by restriction of that of $W_r$, one obtains that $i_{\mathfrak c}$ is a morphism of 
$F_{\mathfrak c}$-representations.  Taking the $(r-2)$-th wedge product, we get 
another morphism of $F_{\mathfrak c}$-representations which is easily seen to be injective as well: 
$$i_{\mathfrak c}^{r-2} :  \wedge ^{r-2} \mathbf C^{\mathfrak c^{red}}\hookrightarrow  \wedge^{r-2} \mathbf C^{{\mathcal L}_r}\, .$$

Since $F_{\mathfrak c}$ acts by permutations on $\mathfrak c^{red}$, the element $1_{\mathfrak c}=\sum_{ C\in \mathfrak c^{red}}C\in \mathbf C^{\mathfrak c^{red}}$ is $F_{\mathfrak c}$-invariant  and its supplementary, noted by $\mathcal U_{\mathfrak c}$,  is the one spanned (over $\mathbf C$) by the elements 
$C-C'\in  \mathbf C^{\mathfrak c^{red}}$ for all $C,C'\in {\mathfrak c^{red}}$.  We denote the same the restrictions  
of $i_{\mathfrak c}
$  and $i_{\mathfrak c}^{r-2} $ to 
 $\mathcal U_{\mathfrak c}$ and $\wedge ^{r-2} \mathcal U_{\mathfrak c}$
 respectively. Likewise,  ${\bf sign}$ again stands for the restriction of the signature morphism ${\bf sign}: W_r\rightarrow \{ \pm 1\}$  to $F_{\mathfrak c}$. 

\begin{lem}
\label{L:Fc-module}
 {\rm 1.} As a group,  $F_{\mathfrak c}$ is isomorphic to $W(D_{r-1})$.

{\rm 2.} The decomposition $ \mathbf C^{\mathfrak c^{red}}={\bf 1}\oplus \mathcal U_{\mathfrak c}$ with 
${\bf 1}=\langle\, 1_{\mathfrak c}\rangle$ 
actually is a direct sum of $F_{\mathfrak c}$-modules. 

{\rm 3.} Up to the isomorphism in {\rm 1.}, one has 
$ \mathcal U_{\mathfrak c} \simeq V_{[.(r-2)1]}$  and 
$\wedge ^{r-2} \mathcal U_{\mathfrak c} \simeq {\bf sign}$
as representations. 
\end{lem}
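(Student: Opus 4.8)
The plan is to prove the three assertions in turn, reducing each time to the distinguished conic class $\mathfrak c=h-\ell_1$. Since $W_r$ acts transitively on $\mathcal K_r$ (\S\ref{SSS:Lines-Conics-properties}, item {\bf (n)}), all the stabilizers $F_{\mathfrak c}$ are conjugate in $W_r$, and the three properties to be proved are invariant under such conjugation (for the last one because ${\bf sign}:W_r\to\{\pm1\}$ is a class function), so it suffices to treat $\mathfrak c=h-\ell_1$; and for this class \S\ref{SSS:Lines-Conics-properties}, item {\bf (n)} already identifies $F_{h-\ell_1}$ with the standard parabolic subgroup $W''_{r-1}=\langle s_2,\ldots,s_r\rangle\subset W_r$, which is a Weyl group of type $D_{r-1}$ (with the convention $D_3=A_3$ for $r=4$). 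This gives assertion~1. Assertion~2 is then formal: $F_{\mathfrak c}$ acts on $\mathbf C^{\mathfrak c^{red}}$ by permuting the basis $\mathfrak c^{red}$, so the invariant line $\langle 1_{\mathfrak c}\rangle={\bf 1}$ and the codimension-one subspace $\mathcal U_{\mathfrak c}$ spanned by the differences $C-C'$ (equivalently, the sum-zero subspace) are both $F_{\mathfrak c}$-stable, and they are complementary over $\mathbf C$; hence $\mathbf C^{\mathfrak c^{red}}={\bf 1}\oplus\mathcal U_{\mathfrak c}$ as $F_{\mathfrak c}$-modules.

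For assertion~3 the first step is to make the permutation action of $F_{\mathfrak c}=W''_{r-1}$ on $\mathfrak c^{red}$ explicit. The reducible fibers of $\mathfrak C_{h-\ell_1}$ (the pencil of lines through $p_1$) are the $r-1$ divisors $\ell_{1j}+\ell_j$, $j=2,\ldots,r$ (compare the first row of Table~\ref{Table:Red-Fibers-dP2}), so $\mathfrak c^{red}$ is canonically indexed by $\{2,\ldots,r\}$. For $i=2,\ldots,r-1$ the generator $s_i$ interchanges $\ell_i$ and $\ell_{i+1}$, fixes $h$ and the other $\ell_k$, hence interchanges $\ell_{1i}=h-\ell_1-\ell_i$ and $\ell_{1,i+1}$; thus $s_i$ acts on $\mathfrak c^{red}$ as the transposition $(i,\,i+1)$ of the index set. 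These $r-2$ transpositions already generate the full symmetric group on $\{2,\ldots,r\}$, so the homomorphism $\pi\colon F_{\mathfrak c}\to\mathrm{Sym}(\mathfrak c^{red})\cong\mathfrak S_{r-1}$ is onto; being a surjection $W(D_{r-1})\twoheadrightarrow\mathfrak S_{r-1}$, its kernel $N$ has order $|W(D_{r-1})|/(r-1)!=2^{r-2}$ and acts trivially on $\mathbf C^{\mathfrak c^{red}}$. Consequently $\mathbf C^{\mathfrak c^{red}}$ is the inflation along $\pi$ of the natural permutation module of $\mathfrak S_{r-1}$, and $\mathcal U_{\mathfrak c}$ is the inflation of its $(r-2)$-dimensional standard summand $S^{(r-2,1)}$. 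Inflation of an irreducible is irreducible, and $S^{(r-2,1)}$ inflated from $\mathfrak S_{r-1}$ to $W(D_{r-1})$ is the module labelled $V_{[.(r-2)1]}$ in the GAP3 convention: it is the restriction to $W(D_{r-1})$ of the hyperoctahedral irreducible indexed by the bipartition $((r-2,1),\emptyset)$, which restricts irreducibly since $(r-2,1)\neq\emptyset$ (for $r=5$ this matching is carried out explicitly in the Appendix using Table~\ref{Table:CharTableWD5}). Hence $\mathcal U_{\mathfrak c}\simeq V_{[.(r-2)1]}$.

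It remains to compute the top exterior power. Since $\dim\mathcal U_{\mathfrak c}=r-2$, one has $\wedge^{r-2}\mathcal U_{\mathfrak c}=\det\mathcal U_{\mathfrak c}$, the inflation along $\pi$ of $\det S^{(r-2,1)}$; and from the exact sequence $0\to{\bf 1}\to\mathbf C^{r-1}\to S^{(r-2,1)}\to 0$ one gets $\det S^{(r-2,1)}=\det(\mathbf C^{r-1})=\mathrm{sign}_{\mathfrak S_{r-1}}$. Thus $\wedge^{r-2}\mathcal U_{\mathfrak c}$ is the linear character $g\mapsto\mathrm{sign}_{\mathfrak S_{r-1}}(\pi(g))$ of $F_{\mathfrak c}$. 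To recognize this as ${\bf sign}|_{F_{\mathfrak c}}$, I will use that $F_{\mathfrak c}=W''_{r-1}$ is a standard parabolic of the Coxeter system $(W_r,\{s_1,\ldots,s_r\})$, so the length function restricts correctly and ${\bf sign}|_{F_{\mathfrak c}}$ is the sign character $g\mapsto(-1)^{\ell_{F_{\mathfrak c}}(g)}$ of the Coxeter group $F_{\mathfrak c}$; both sides being linear characters, it suffices to check agreement on the generators $s_2,\ldots,s_r$. For $s_2,\ldots,s_{r-1}$ their $\pi$-images are transpositions, hence of sign $-1$, matching ${\bf sign}(s_i)=-1$. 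For the fork generator $s_r$ (the reflection in $\rho_r=h-\ell_1-\ell_2-\ell_3$, see \eqref{Eq:s-rho}) a short direct computation shows that $\pi(s_r)$ interchanges the two fibers $\ell_{12}+\ell_2$ and $\ell_{13}+\ell_3$ and fixes the others, so $\pi(s_r)$ is again a transposition, of sign $-1$; the two characters therefore agree on all generators and so coincide. The only genuinely non-formal points in this scheme are the two small computations with \eqref{Eq:s-rho} (the action of $s_i$, and especially of $s_r$, on $\mathfrak c^{red}$) and the identification of the intrinsically described module $\mathcal U_{\mathfrak c}$ with the specific GAP3 label; everything else is bookkeeping about permutation modules and parabolic subgroups, so I expect this label-matching to be the only mild obstacle.
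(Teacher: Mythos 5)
Your proposal is correct and follows essentially the same route as the paper: reduce by transitivity to $\mathfrak c=h-\ell_1$, identify $F_{\mathfrak c}$ with the standard parabolic $W''_{r-1}\simeq W(D_{r-1})$, compute that $s_2,\ldots,s_{r-1}$ and $s_r$ act on $\mathfrak c^{red}$ as the transpositions $(C_i,C_{i+1})$ and $(C_2,C_3)$, and decompose the permutation module accordingly. The only (harmless) variation is at the very end: you pin down $\wedge^{r-2}\mathcal U_{\mathfrak c}$ as the inflation of $\mathrm{sign}_{\mathfrak S_{r-1}}$ and check agreement with ${\bf sign}$ on every generator, whereas the paper exhibits the explicit wedge $\varpi_{\mathfrak c}$, notes $s_2\cdot\varpi_{\mathfrak c}=-\varpi_{\mathfrak c}$, and invokes the fact that $W(D_{r-1})$ has only the trivial and sign characters in degree one.
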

\begin{proof} 
 Since $W_r$ acts transitively on $\mathcal K_r$, it suffices to prove the lemma when $\mathfrak c=h-\ell_1$. 
 In this case, one verifies that $F_{\mathfrak c}$ is isomorphic to the subroup of $W_r$ generated by 
 the $s_i$'s for $i=2,\ldots,r$. Thus $F_{\mathfrak c}=W''_{r-1}=W(D_{r-1})$ (with the convention that $D_3=A_3$). 

The second point follows at once from the fact that $W''_{r-1}$ acts by permutations on $\mathfrak c^{red}$. 

To prove 3, we notice that the elements of $\mathfrak c^{red}$ are the conics $C_j=[h-l_1-l_j]+[l_j]$'s for $j=2,\ldots,r$, where each element between brackets is an element  of ${\mathcal L}_r$ and the notation means that $C_j$ has the two  lines $h-l_1-l_j$ and $l_j$ (viewed here as curves contained in $X_r$) as irreducible components. Direct computations give that as permutations of $\mathfrak c^{red}=\{ C_2,\ldots,C_r\}$: \\
${}^{}$ \quad  $-$ $s_j$ is the transposition exchanging $C_j$ and $C_{j+1}$ for $j=2,\ldots,r-1$; \\
${}^{}$ \quad  $-$ $s_r$  is the transposition exchanging $C_2$ and $C_{3}$. 

By elementary character theory of $W(D_{r-1})$, one gets $ \mathbf C^{\mathfrak c^{red}}={\bf 1}\oplus V_{[.(r-2)1]}$ where  $ V_{[.(r-2)1]}$ is the representation (of degree $r-2$) associated to the bipartition $(-, (r-2)1)$ where 
$-$ stands for the empty partition and $(r-2)1$ for the partition $r-1=(r-2)+1$.  
Finally, setting  $$\varpi_{\mathfrak c}=\Big( \big(C_2-C_r\big)\wedge \cdots \wedge  \big(C_{r-1}-C_r\big) \Big)  \in 
 \wedge ^{r-2} \mathcal U_{\mathfrak c}\, ,$$
  it follows from the above that 
$s_2\cdot \varpi_{\mathfrak c}=-\varpi_{\mathfrak c}$. 
Since $\varpi_{\mathfrak c}\not = 0$ and because 
the trivial and 
the signature representations  are the only two $W(D_5)$-irreps of degree 1, it follows that $\wedge ^{r-2} \mathcal U_{\mathfrak c}\simeq  {\bf sign}$. 
\end{proof}

From now on, we fix one of the conic classes that we denote ${\mathfrak c}_0$. As it is easy to check (left to the reader), all the constructions and results below do not depend on this choice (possibly only up to sign but this will not matter).  Although it is a bit formal, it will be convenient to use the following notation below: for any $\mathfrak c\in {\mathcal K}_r$ and  given an element $\varpi_{\mathfrak c} \in 
\wedge ^{r-2} \mathcal U_{\mathfrak c}$ we denote by $\omega_{\mathfrak c}$ the corresponding element in 
$
\wedge ^{r-2} \mathbf C^{{\mathcal L}_r}$, that is one has $i_{\mathfrak c}^{r-2}(\varpi_{\mathfrak c})=\omega_{\mathfrak c}$. 

Our main goal here can be formalized as follows: setting $ \Psi^{r-2}=\oplus_{\mathfrak c \in {\mathcal K}_r} i_{\mathfrak c}^{r-2}$, we get a 
 linear map 
 $$ 
 \Psi^{r-2} \, :\, 
\bigoplus_{\mathfrak c \in {\mathcal K}_r}  \wedge^{r-2}  
\mathcal U_{\mathfrak c}
\longrightarrow \wedge^{r-2}\big( \mathbf C^{{\mathcal L}_r}\big)
$$
and we aim to construct a specific non trivial element 
in its kernel $\boldsymbol{\mathcal K}^{r-2}={\rm ker}( \Psi^{r-2})$, noted by 
${\bf hlog}^{r-2}$. 
For this sake, we are going to take into account the action of the Weyl group $W_r$ and establish that $ \Psi^{r-2}$  can actually be extended as a morphism of $W_r$-modules.  The  sought-after element ${\bf hlog}^{r-2}$ will be defined as the  generator (defined up to sign) of  a degree 1 sub-$W_r$-representation of $\boldsymbol{\mathcal K}^{r-2}$.

We now define a natural $W_r$-action on the domain range of  $ \Psi^{r-2}$.
The action of $W_r$ on ${\mathcal K}_r$ corresponds to a 
group morphism that we will denote by  $\kappa: W_r\rightarrow \mathfrak S_{{\mathcal K}_r}$. To simplify, for $w\in W_r$ and $\mathfrak c\in {\mathcal K}_r$ we denote by $w({\mathfrak c})$ the conic class $\kappa(w)({\mathfrak c})$.  Since $W_r\cdot {\mathfrak c}_0={\mathcal K}_r$, it follows that 
for any ${\mathfrak c}\in \mathcal K_r$, there exists $\sigma_{\hspace{-0.05cm} {\mathfrak c}_0}^
{{\mathfrak c}}\in W_r$ such that $\sigma_{\hspace{-0.05cm}{\mathfrak c}_0}^
{{\mathfrak c}}( {\mathfrak c}_0)={\mathfrak c}$.  
One takes $\sigma_{\hspace{-0.05cm} {\mathfrak c}_0}^
{{\mathfrak c}_0}=1$ but there is no canonical choice for any of the others  
$\sigma_{\hspace{-0.05cm}{\mathfrak c}_0}^
{{\mathfrak c}}$'s: one has $W_r=
\sqcup_{ \mathfrak c \in {\mathcal K}_r} 
\sigma_{\hspace{-0.05cm} {\mathfrak c}_0}^
{{\mathfrak c}}
F_{\hspace{-0.05cm}{\mathfrak c}_0}$ 
which makes clear that any of the $\sigma_{\hspace{-0.05cm} {\mathfrak c}_0}^{{\mathfrak c}}$ for ${\mathfrak c}\neq {\mathfrak c}_0$ is only defined up to right composition with an element of $F_{\hspace{-0.05cm}{\mathfrak c}_0}$. 
\sk
 
 Since  $\mathcal U_{{\mathfrak c}_0}$ carries a natural $\mathbf Z$-structure, it follows that the 1-dimensional vector space $\wedge ^{r-2} \mathcal U_{{\mathfrak c}_0}$ has a canonical generator,  well-defined up to sign. We fix and denote by $\varpi_{{\mathfrak c}_0}$ one of these two generators.  Denoting by $\bullet$ the $W_r$-action on $\wedge ^{r-2} \mathbf C^{ {\mathcal L}_r }$ for clarity, one verifies that 
for any ${\mathfrak c}\in {\mathcal K}_r$, 
$\sigma_{\hspace{-0.05cm} {\mathfrak c}_0}^
{{\mathfrak c}} \bullet \omega_0$ belongs to the image of 
$\wedge ^{r-2}
\mathcal U_{ {\mathfrak c}}$ by 
  $i^{r-2}_{ {\mathfrak c}}$. 
Then for any conic class $\mathfrak c$, we set 
\begin{equation}
\label{Eq:varpi-frak-c}
\varpi_{\mathfrak c}= 
\boldsymbol{\epsilon}
\big(  
\sigma_{\hspace{-0.05cm} {\mathfrak c}_0}^{{\mathfrak c}}
\big)\cdot \big(i_{\mathfrak c}^{ r-2}\big)^{-1}\Big( \sigma_{\hspace{-0.05cm} {\mathfrak c}_0}^{{\mathfrak c}}\bullet \omega_0\Big)
\in 
\wedge ^{r-2}
\mathcal U_{ {\mathfrak c}}\, .
\end{equation}

\begin{lem}
\label{L:basis-varpi-c}
The tuple $\big(\varpi_{\mathfrak c}\big)_{ \mathfrak c \in {\mathcal K}_r}$ 
is well defined (up to sign) and forms a basis of $\oplus_{\mathfrak c \in {\mathcal K}_r}  \wedge^{r-2}  
\mathcal U_{\mathfrak c}$.
\end{lem}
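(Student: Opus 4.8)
\textbf{Proof plan for Lemma \ref{L:basis-varpi-c}.}

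The plan is to establish well-definedness first and then linear independence, both by reducing everything to the action of the Weyl group $W_r$ on the single canonical generator $\varpi_{{\mathfrak c}_0}$. For well-definedness, the only ambiguity in the definition \eqref{Eq:varpi-frak-c} comes from the non-canonical choice of the coset representative $\sigma_{\hspace{-0.05cm}{\mathfrak c}_0}^{{\mathfrak c}}$, which by the decomposition $W_r=\sqcup_{\mathfrak c}\sigma_{\hspace{-0.05cm}{\mathfrak c}_0}^{{\mathfrak c}}F_{\hspace{-0.05cm}{\mathfrak c}_0}$ is only pinned down up to right multiplication by an element $g\in F_{\hspace{-0.05cm}{\mathfrak c}_0}$. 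So the first step is to check that replacing $\sigma_{\hspace{-0.05cm}{\mathfrak c}_0}^{{\mathfrak c}}$ by $\sigma_{\hspace{-0.05cm}{\mathfrak c}_0}^{{\mathfrak c}}g$ does not change $\varpi_{\mathfrak c}$. Here I would use part 3 of Lemma \ref{L:Fc-module}: $\wedge^{r-2}\mathcal U_{{\mathfrak c}_0}\simeq{\bf sign}$ as an $F_{\hspace{-0.05cm}{\mathfrak c}_0}$-module, so $g\bullet\omega_0 = \boldsymbol\epsilon(g)\,\omega_0$ (identifying $\omega_0 = i^{r-2}_{{\mathfrak c}_0}(\varpi_{{\mathfrak c}_0})$ and using that $i^{r-2}_{{\mathfrak c}_0}$ is an $F_{\hspace{-0.05cm}{\mathfrak c}_0}$-equivariant injection). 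Then $\sigma_{\hspace{-0.05cm}{\mathfrak c}_0}^{{\mathfrak c}}g\bullet\omega_0 = \boldsymbol\epsilon(g)\,\sigma_{\hspace{-0.05cm}{\mathfrak c}_0}^{{\mathfrak c}}\bullet\omega_0$, while the sign prefactor changes by $\boldsymbol\epsilon(\sigma_{\hspace{-0.05cm}{\mathfrak c}_0}^{{\mathfrak c}}g) = \boldsymbol\epsilon(\sigma_{\hspace{-0.05cm}{\mathfrak c}_0}^{{\mathfrak c}})\boldsymbol\epsilon(g)$, and the two factors of $\boldsymbol\epsilon(g)$ cancel. This gives well-definedness up to the single overall sign coming from the choice of $\varpi_{{\mathfrak c}_0}$ among its two values.

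For the basis claim, I would argue that the natural map $\oplus_{\mathfrak c}\wedge^{r-2}\mathcal U_{\mathfrak c}\to\wedge^{r-2}\mathbf C^{{\mathcal L}_r}$ has as its factors the injections $i_{\mathfrak c}^{r-2}$, whose images lie in distinct coordinate subspaces in a suitable sense: more precisely, the cardinality count $\lvert{\mathcal K}_r\rvert = \kappa_r$ matches the number of vectors $\varpi_{\mathfrak c}$, so it suffices to show they are linearly independent. Since each $\wedge^{r-2}\mathcal U_{\mathfrak c}$ is one-dimensional, the tuple $(\varpi_{\mathfrak c})_{\mathfrak c}$ is automatically a basis of $\oplus_{\mathfrak c}\wedge^{r-2}\mathcal U_{\mathfrak c}$ the moment each $\varpi_{\mathfrak c}$ is nonzero. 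So the real content is simply that $\varpi_{\mathfrak c}\neq 0$ for every $\mathfrak c$, which is immediate: $\omega_0\neq 0$ (it is a wedge of $r-2$ linearly independent vectors $C_2-C_r,\dots,C_{r-1}-C_r$ in $\mathcal U_{{\mathfrak c}_0}$, by part 3 of Lemma \ref{L:Fc-module}), the $W_r$-action $\bullet$ on $\wedge^{r-2}\mathbf C^{{\mathcal L}_r}$ is invertible, hence $\sigma_{\hspace{-0.05cm}{\mathfrak c}_0}^{{\mathfrak c}}\bullet\omega_0\neq 0$, and $(i_{\mathfrak c}^{r-2})^{-1}$ of a nonzero element lying in its image is nonzero. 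Multiplying by the sign $\boldsymbol\epsilon(\sigma_{\hspace{-0.05cm}{\mathfrak c}_0}^{{\mathfrak c}})=\pm1$ keeps it nonzero.

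I do not expect a genuine obstacle here; the lemma is essentially bookkeeping, and the one subtle point is the coset-independence in the first paragraph, where it is crucial to invoke the $F_{\hspace{-0.05cm}{\mathfrak c}_0}$-equivariance of $i^{r-2}_{{\mathfrak c}_0}$ together with the identification $\wedge^{r-2}\mathcal U_{{\mathfrak c}_0}\simeq{\bf sign}$ from Lemma \ref{L:Fc-module}. One should also verify, as a consistency check, that when $\mathfrak c = {\mathfrak c}_0$ and we take $\sigma_{\hspace{-0.05cm}{\mathfrak c}_0}^{{\mathfrak c}_0}=1$ the formula \eqref{Eq:varpi-frak-c} indeed returns $\varpi_{{\mathfrak c}_0}$, which it does since $\boldsymbol\epsilon(1)=1$ and $(i^{r-2}_{{\mathfrak c}_0})^{-1}(\omega_0)=\varpi_{{\mathfrak c}_0}$ by definition of $\omega_0$. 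The statement that the resulting $W_r$-action on $\oplus_{\mathfrak c}\wedge^{r-2}\mathcal U_{\mathfrak c}$ (obtained by permuting the summands and acting on $\varpi_{{\mathfrak c}_0}$) makes $\Psi^{r-2}$ equivariant is what will be used afterwards, but that is the content of the next step in the paper rather than of this lemma.
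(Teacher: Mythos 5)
Your proof is correct and follows essentially the same route as the paper: coset-independence of $\varpi_{\mathfrak c}$ via the identification $\wedge^{r-2}\mathcal U_{{\mathfrak c}_0}\simeq{\bf sign}$ from Lemma \ref{L:Fc-module}.3 (so the two signature factors cancel), and the basis claim from the nonvanishing of each $\varpi_{\mathfrak c}$ together with $\dim\wedge^{r-2}\mathcal U_{\mathfrak c}=1$. The only point the paper adds, and leaves to the reader anyway, is the independence of the whole construction from the choice of the base conic class ${\mathfrak c}_0$, which your write-up does not spell out but which does not affect the argument.
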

\begin{proof}
Since $\varpi_{\mathfrak c}\neq 0$ and $\dim\, \wedge ^{r-2}
\mathcal U_{ {\mathfrak c}}=1$ for any conic class $\mathfrak c$, it is clear than 
$\big(\varpi_{\mathfrak c}\big)_{ \mathfrak c \in {\mathcal K}_r}$ is a basis.

For any $\mathfrak c \in {\mathcal K}_r$, any 
$\tilde \sigma_{\hspace{-0.05cm} {\mathfrak c}_0}^
{{\mathfrak c}} \in W_r$ such that 
$\tilde \sigma_{\hspace{-0.05cm} {\mathfrak c}_0}^
{{\mathfrak c}}({\mathfrak c}_0)={\mathfrak c}$ necessarily is written 
$\tilde \sigma_{\hspace{-0.05cm} {\mathfrak c}_0}^
{{\mathfrak c}}=\sigma_{\hspace{-0.05cm} {\mathfrak c}_0}^
{{\mathfrak c}}\,\nu$ for some $\nu \in F_{\hspace{-0.05cm}{\mathfrak c}_0}$. 
Thus one has $\tilde \sigma_{\hspace{-0.05cm} {\mathfrak c}_0}^
{{\mathfrak c}}\bullet \omega_0= 
\big( \sigma_{\hspace{-0.05cm} {\mathfrak c}_0}^
{{\mathfrak c}}\,\nu \big)
\bullet \omega_{{\mathfrak c}_0}
=
 \sigma_{\hspace{-0.05cm} {\mathfrak c}_0}^
{{\mathfrak c}}\bullet \big( 
\nu \bullet \omega_{{\mathfrak c}_0}
\big)$. 
By Lemma \ref{L:Fc-module}.3), one has  $\nu \bullet \omega_{{\mathfrak c}_0}=
\boldsymbol{\epsilon} (\nu)\,
\omega_{{\mathfrak c}_0}$ hence 
$\tilde \sigma_{\hspace{-0.05cm} {\mathfrak c}_0}^
{{\mathfrak c}}\bullet \omega_0= 
\boldsymbol{\epsilon} (\nu)\,
 \sigma_{\hspace{-0.05cm} {\mathfrak c}_0}^
{{\mathfrak c}}\bullet \omega_0$ and consequently 
$
\boldsymbol{\epsilon} 
\big(\tilde \sigma_{\hspace{-0.05cm} {\mathfrak c}_0}^
{{\mathfrak c}}\big)\,\big( 
\tilde \sigma_{\hspace{-0.05cm} {\mathfrak c}_0}^
{{\mathfrak c}}\bullet \omega_0 \big)=
\boldsymbol{\epsilon} 
\big( \sigma_{\hspace{-0.05cm} {\mathfrak c}_0}^
{{\mathfrak c}}\big)\,\big( 
 \sigma_{\hspace{-0.05cm} {\mathfrak c}_0}^
{{\mathfrak c}}\bullet \omega_0 \big)$. This implies that 
$\varpi_{\mathfrak c}$ does not depend on $ \sigma_{\hspace{-0.05cm} {\mathfrak c}_0}^
{{\mathfrak c}}$ but only on the coset  $\sigma_{\hspace{-0.05cm} {\mathfrak c}_0}^
{{\mathfrak c}} F_{\hspace{-0.05cm} {\mathfrak c}_0 }$, that is on the associated conic class ${\mathfrak c}$.  Finally, it is straightforward (and left to the reader) to verify that $\big(\varpi_{\mathfrak c}\big)_{ \mathfrak c \in {\mathcal K}_r}$ is independent of the choice of the base conic class ${\mathfrak c}_0$.  
\end{proof}

For any $w\in W_r$ and any $\mathfrak c\in {\mathcal K}_r$, one sets 
\begin{equation}
\label{Eq:w-circ-varpi-frak-c}
w \circ \varpi_{\mathfrak c} = \boldsymbol{\epsilon}(w)\,
\varpi_{w(\mathfrak c)}\, .
\end{equation}
Since  $\big(\varpi_{\mathfrak c}\big)_{ \mathfrak c \in {\mathcal K}_r}$ is a basis of $\oplus_{\mathfrak c \in {\mathcal K}_r}  \wedge^{r-2}  
\mathcal U_{\mathfrak c}$,  $\circ$ can be extended by linearity to the whole space. We thus obtain a linear $W_r$-action, again denoted by $\circ$, with regard to which the following holds true: 
\begin{lem}
\label{L:Psi-r-2-equiv}
{\rm 1.} The map $\Psi^{r-2}$  is $W_r$-equivariant.\sk

{\rm 2.} As $W_r$-modules, one has $\bigoplus_{\mathfrak c \in {\mathcal K}_r}  \wedge^{r-2}  
\mathcal U_{\mathfrak c}\simeq \boldsymbol{\epsilon}\otimes \mathbf C^{{\mathcal K}_r}$.
\end{lem}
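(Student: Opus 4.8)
The plan is to deduce both assertions directly from the constructions of \S\ref{SS-Key-Clef}, with the coset-independence established inside Lemma \ref{L:basis-varpi-c} doing essentially all the work; no new geometric input is needed. As a preliminary I would note that \eqref{Eq:w-circ-varpi-frak-c} does define a genuine linear $W_r$-action on $\bigoplus_{\mathfrak c\in{\mathcal K}_r}\wedge^{r-2}\mathcal U_{\mathfrak c}$: since $\kappa:W_r\to\mathfrak S_{{\mathcal K}_r}$ is a group morphism and $\boldsymbol{\epsilon}$ is multiplicative, one has $(ww')\circ\varpi_{\mathfrak c}=\boldsymbol{\epsilon}(ww')\,\varpi_{w(w'(\mathfrak c))}=w\circ(w'\circ\varpi_{\mathfrak c})$ on the basis $(\varpi_{\mathfrak c})_{\mathfrak c\in{\mathcal K}_r}$, hence on the whole space by linearity.

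For part 1, since $\Psi^{r-2}$ and the action $\circ$ are both linear it suffices to verify $\Psi^{r-2}(w\circ\varpi_{\mathfrak c})=w\bullet\Psi^{r-2}(\varpi_{\mathfrak c})$ for each $w\in W_r$ and $\mathfrak c\in{\mathcal K}_r$. Writing $\omega_{\mathfrak c}=i_{\mathfrak c}^{r-2}(\varpi_{\mathfrak c})=\Psi^{r-2}(\varpi_{\mathfrak c})$ as before, the left-hand side equals $\boldsymbol{\epsilon}(w)\,\omega_{w(\mathfrak c)}$, so the equivariance is equivalent to the identity $\omega_{w(\mathfrak c)}=\boldsymbol{\epsilon}(w)\,(w\bullet\omega_{\mathfrak c})$. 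To establish this I would apply \eqref{Eq:varpi-frak-c} twice: unwinding the definition gives $\omega_{\mathfrak c}=\boldsymbol{\epsilon}\big(\sigma_{\hspace{-0.05cm}{\mathfrak c}_0}^{\mathfrak c}\big)\,\big(\sigma_{\hspace{-0.05cm}{\mathfrak c}_0}^{\mathfrak c}\bullet\omega_0\big)$; and since $w\,\sigma_{\hspace{-0.05cm}{\mathfrak c}_0}^{\mathfrak c}$ sends ${\mathfrak c}_0$ to $w(\mathfrak c)$, the coset-independence proven in the course of Lemma \ref{L:basis-varpi-c} (that $\boldsymbol{\epsilon}(\tilde\sigma)\,(\tilde\sigma\bullet\omega_0)$ depends only on $\tilde\sigma({\mathfrak c}_0)$) yields $\omega_{w(\mathfrak c)}=\boldsymbol{\epsilon}\big(w\sigma_{\hspace{-0.05cm}{\mathfrak c}_0}^{\mathfrak c}\big)\,\big((w\sigma_{\hspace{-0.05cm}{\mathfrak c}_0}^{\mathfrak c})\bullet\omega_0\big)$. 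Expanding using the multiplicativity of $\boldsymbol{\epsilon}$ and the fact that $\bullet$ is an action gives precisely $\boldsymbol{\epsilon}(w)\,(w\bullet\omega_{\mathfrak c})$, as wanted.

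For part 2, I would fix the standard permutation basis $(e_{\mathfrak c})_{\mathfrak c\in{\mathcal K}_r}$ of $\mathbf C^{{\mathcal K}_r}$, so that the $W_r$-action on $\boldsymbol{\epsilon}\otimes\mathbf C^{{\mathcal K}_r}$ reads $w\cdot e_{\mathfrak c}=\boldsymbol{\epsilon}(w)\,e_{w(\mathfrak c)}$. By Lemma \ref{L:basis-varpi-c} the tuple $(\varpi_{\mathfrak c})_{\mathfrak c\in{\mathcal K}_r}$ is a basis of $\bigoplus_{\mathfrak c}\wedge^{r-2}\mathcal U_{\mathfrak c}$, and \eqref{Eq:w-circ-varpi-frak-c} says exactly $w\circ\varpi_{\mathfrak c}=\boldsymbol{\epsilon}(w)\,\varpi_{w(\mathfrak c)}$; consequently the linear bijection $\varpi_{\mathfrak c}\mapsto e_{\mathfrak c}$ intertwines $\circ$ with the twisted permutation action, which is the claimed $W_r$-isomorphism.

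The only delicate point — and it has already been absorbed into Lemma \ref{L:basis-varpi-c} — is the non-canonicity of the coset representatives $\sigma_{\hspace{-0.05cm}{\mathfrak c}_0}^{\mathfrak c}$ occurring in \eqref{Eq:varpi-frak-c}; once one knows that the signed vectors $\boldsymbol{\epsilon}(\tilde\sigma)\,(\tilde\sigma\bullet\omega_0)$ depend only on the image conic class $\tilde\sigma({\mathfrak c}_0)$, both parts become pure bookkeeping with the multiplicativity of the signature character. I therefore do not anticipate any genuine obstacle; the statement is in essence a repackaging of the constructions in \S\ref{SS-Key-Clef} together with Lemmas \ref{L:Fc-module} and \ref{L:basis-varpi-c}.
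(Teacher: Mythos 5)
Your proposal is correct and follows the same route as the paper, whose proof is just the two-line remark that point 1 follows from the definition \eqref{Eq:varpi-frak-c} of the $\varpi_{\mathfrak c}$'s and point 2 from \eqref{Eq:w-circ-varpi-frak-c}; you simply spell out that bookkeeping, using the coset-independence already established in Lemma \ref{L:basis-varpi-c} exactly as intended. Nothing is missing and no new ideas are introduced beyond what the paper's proof implicitly relies on.
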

\begin{proof}
The first point follows easily from the very definition 
\eqref{Eq:varpi-frak-c}
of the $\varpi_{ {\mathfrak c}  }$'s and the second  
is a direct consequence of \eqref{Eq:w-circ-varpi-frak-c}. 
\end{proof}

We now define the main hero of this paper, namely 
\begin{equation}
\label{Eq:hero}
{\bf hlog}^{r-2}= \sum_{{\mathfrak c}\in {\mathcal K}_r} 
\varpi_{{\mathfrak c}}\,.
\end{equation}
From Lemma \ref{L:basis-varpi-c}, it follows that it is a non zero element of  $\oplus_{\mathfrak c \in {\mathcal K}_r}  \wedge^{r-2}  
\mathcal U_{\mathfrak c}$ which is canonically defined up to sign.
Its most important properties are the content of the following
\begin{prop} 
\label{Prop:Invariant-Formulation}
 {\rm 1.} The element ${\bf hlog}^{r-2}$ spans the signature subrepresentation of $\oplus_{\mathfrak c \in {\mathcal K}_r}  \wedge^{r-2}  
\mathcal U_{\mathfrak c}$. 

 {\rm 2.}  Hence for 
$r=4,\ldots,7$,  
 ${\bf hlog}^{r-2}$ lies in ${\rm ker}\big( \Psi^{r-2}\big)$, 
{\it i.e.}\,one has 
$
\sum_{{\mathfrak c}\in {\mathcal K}_r} 
\omega_{{\mathfrak c}}=0
$
in 
$\wedge ^{r-2}\mathbf C^{{\mathcal L}_r}$.
\end{prop}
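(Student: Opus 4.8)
The plan is to exploit the two facts already in hand: the $W_r$-equivariance of $\Psi^{r-2}$ (Lemma \ref{L:Psi-r-2-equiv}.1) together with the identification $\bigoplus_{\mathfrak c}\wedge^{r-2}\mathcal U_{\mathfrak c}\simeq\boldsymbol{\epsilon}\otimes\mathbf C^{{\mathcal K}_r}$ (Lemma \ref{L:Psi-r-2-equiv}.2), and the decomposition of $\mathbf C^{{\mathcal L}_r}$ into $W_r$-irreducibles furnished by Proposition \ref{Prop:Key}. First I would prove part 1: the element ${\bf hlog}^{r-2}=\sum_{\mathfrak c}\varpi_{\mathfrak c}$ is obviously nonzero by Lemma \ref{L:basis-varpi-c}, and for any $w\in W_r$ the action \eqref{Eq:w-circ-varpi-frak-c} gives $w\circ{\bf hlog}^{r-2}=\sum_{\mathfrak c}\boldsymbol{\epsilon}(w)\,\varpi_{w(\mathfrak c)}=\boldsymbol{\epsilon}(w)\sum_{\mathfrak c}\varpi_{\mathfrak c}=\boldsymbol{\epsilon}(w)\,{\bf hlog}^{r-2}$, since $\mathfrak c\mapsto w(\mathfrak c)$ permutes ${\mathcal K}_r$. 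Hence $\langle{\bf hlog}^{r-2}\rangle$ is a one-dimensional subspace on which $W_r$ acts by the signature character, i.e.\ it spans a copy of the signature subrepresentation inside $\boldsymbol{\epsilon}\otimes\mathbf C^{{\mathcal K}_r}$. (Concretely $\boldsymbol{\epsilon}\otimes\mathbf C^{{\mathcal K}_r}$ contains $\boldsymbol{\epsilon}$ with multiplicity one, coming from the $W_r$-invariant vector $\sum_{\mathfrak c}[\mathfrak c]\in\mathbf C^{{\mathcal K}_r}$, and ${\bf hlog}^{r-2}$ is exactly the image of that vector.)

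For part 2 the key is that $\Psi^{r-2}$ is $W_r$-equivariant, so its image $\Psi^{r-2}\bigl({\bf hlog}^{r-2}\bigr)=\sum_{\mathfrak c}\omega_{\mathfrak c}\in\wedge^{r-2}\mathbf C^{{\mathcal L}_r}$ generates a $W_r$-subrepresentation of $\wedge^{r-2}\mathbf C^{{\mathcal L}_r}$ that is either zero or isomorphic to the signature representation $\boldsymbol{\epsilon}$. It therefore suffices to show that $\boldsymbol{\epsilon}$ does \emph{not} occur in the decomposition of $\wedge^{r-2}\mathbf C^{{\mathcal L}_r}$ into $W_r$-irreducibles; this forces $\sum_{\mathfrak c}\omega_{\mathfrak c}=0$ and hence ${\bf hlog}^{r-2}\in{\rm ker}(\Psi^{r-2})$. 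To carry this out I would, for each $r=4,5,6,7$, start from the decomposition of $\mathbf C^{{\mathcal L}_r}$ in \eqref{Eq:Decomposition-Wr}, compute the character of $\wedge^{r-2}\mathbf C^{{\mathcal L}_r}$ via the standard formula expressing exterior-power characters in terms of the power sums $\chi(g),\chi(g^2),\dots,\chi(g^{r-2})$ (Newton's identities, or equivalently $\sum_k t^k\chi_{\wedge^k V}(g)=\exp\bigl(\sum_{m\ge1}(-1)^{m-1}\chi_V(g^m)t^m/m\bigr)$), and then take the inner product of the resulting class function with the signature character of $W_r$ to check that it vanishes. Here the signature character is itself elementary: on the reflection-representation description $\boldsymbol{\epsilon}(s_i)=-1$ for each simple reflection, so $\boldsymbol{\epsilon}(w)=(-1)^{\ell(w)}$.

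The main obstacle is the explicit character computation for the exceptional cases $r=6,7$: the Weyl groups $W(E_6)$ and $W(E_7)$ have $25$ and $60$ conjugacy classes respectively, and one must track $\chi_{\mathbf C^{{\mathcal L}_r}}(g^m)$ for $m$ up to $r-2=4,5$ across all classes, then form the exterior-power character and pair it with $\boldsymbol{\epsilon}$. This is best done with a computer algebra system (GAP), exactly as indicated in the sketch of Proposition \ref{Prop:Key}: one already has the matrices $M_i={\rm Mat}_{\mathcal B_r}(s_i)$, so $\chi_{\mathbf C^{{\mathcal L}_r}}(g^m)$ is a trace of a product of these integer matrices, and the rest is linear algebra over $\mathbf Q$. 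For $r=4$ ($W_5=\mathfrak S_5$, with $\wedge^2$ of a $16$-dimensional module) and $r=5$ ($W(D_5)$, $\wedge^3$ of a $16$-dimensional module) the computation is small enough to present in the Appendix, where the $r=5$ case is worked out in full; the remaining cases follow the identical recipe. I would close by remarking that combining Proposition \ref{Prop:Invariant-Formulation}.2 with Proposition \ref{Prop:Symbolic} immediately yields Theorem \ref{Thm:main}, with the sign tuple $(\epsilon_i)$ read off from the expansion of each $\varpi_{\mathfrak c}$ in the basis $\big(\varpi_{\mathfrak c}\big)_{\mathfrak c\in{\mathcal K}_r}$, which is $\pm1$ by construction, and uniqueness up to sign coming from the one-dimensionality of the signature isotypic component.
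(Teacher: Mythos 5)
Your proposal is correct and follows essentially the same route as the paper: part 1 via the computation $w\circ{\bf hlog}^{r-2}=\boldsymbol{\epsilon}(w)\,{\bf hlog}^{r-2}$ together with the multiplicity-one occurrence of $\boldsymbol{\epsilon}$ in $\boldsymbol{\epsilon}\otimes\mathbf C^{{\mathcal K}_r}$, and part 2 by combining the $W_r$-equivariance of $\Psi^{r-2}$ with the computer-verified vanishing of the signature multiplicity in $\wedge^{r-2}\mathbf C^{{\mathcal L}_r}$ for $r=4,\ldots,7$ (the paper's Fact \ref{Fact-mult-signature}, established in the Appendix by exactly the GAP character computation you describe).
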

\begin{proof}
For $w\in W_r$, it follows from \eqref{Eq:w-circ-varpi-frak-c} and the very definition of ${\bf hlog}^{r-2}$ that 
\begin{equation}
w\circ {\bf hlog}^{r-2}=
\sum_{{\mathfrak c}\in {\mathcal K}_r} 
w\circ \varpi_{{\mathfrak c}}=
\sum_{{\mathfrak c}\in {\mathcal K}_r} 
\boldsymbol{\epsilon}(w)\, 
 \varpi_{w({\mathfrak c})}=
 \boldsymbol{\epsilon}(w)\, \left(
\sum_{{\mathfrak c}\in {\mathcal K}_r} 
\varpi_{w{\mathfrak c}}
\right)=\boldsymbol{\epsilon}(w)\,  {\bf hlog}^{r-2}\,.
\end{equation}
Since ${\bf hlog}^{r-2}$ is not trivial, it spans a $W_r$-subrepresentation of $\oplus_{\mathfrak c \in {\mathcal K}_r}  \wedge^{r-2}  
\mathcal U_{\mathfrak c}$ isomorphic to $\boldsymbol{\epsilon}$.  Since 
there is a unique such subrepresentation (as 
it follows immediately from Lemma \ref{L:Psi-r-2-equiv}.2 and \eqref{Eq:Decomposition-Kr}), one gets the first point.

To get the second point, recall that $\Psi^{r-2}$ is a morphism of $W_r$-representations according to Lemma \ref{L:Psi-r-2-equiv}.1. If 
${\bf hlog}^{r-2}$ were not zero, then it would span a sub-module of 
$\wedge ^{r-2} \mathbf C^{ {\mathcal L}_r  } $ isomorphic to  $\boldsymbol{\epsilon}$ according to 1. Since there is no such submodule 
in $\wedge ^{r-2}\mathbf C^{{\mathcal L}_r}$
for 
$r=4,\ldots,7$ according to the fact just below, 
we necessarily have $\Psi^{r-2}\big( {\bf hlog}^{r-2}\big)=0$.
\end{proof}

\begin{fact}
\label{Fact-mult-signature}
For $r=4,\ldots,8$, let $m^{\boldsymbol{\epsilon}}_r$ 
be the multiplicity of the signature representation in the decomposition 
of $\wedge ^{r-2} \mathbf C^{ {\mathcal L}_r}$ into $W_r$-irreducibles.  Then 
\begin{equation}
\label{m-signa}
m^{\boldsymbol{\epsilon}}_r=
\begin{cases} 
\, 0 \hspace{0.3cm} \mbox{for } \, r=4,\ldots,7 \\
\, 5 \hspace{0.3cm} \mbox{for } \, r=8\, .
\end{cases}
\end{equation}
\end{fact}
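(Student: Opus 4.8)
The plan is to verify the multiplicity $m^{\boldsymbol{\epsilon}}_r = \dim \operatorname{Hom}_{W_r}\bigl(\boldsymbol{\epsilon}, \wedge^{r-2}\mathbf C^{\mathcal L_r}\bigr)$ directly from character theory, exactly as was done for $\mathbf C^{\mathcal L_r}$ itself in the sketched proof of Proposition~\ref{Prop:Key}. By Frobenius reciprocity (or simply the orthogonality relations) this multiplicity equals
$$
m^{\boldsymbol{\epsilon}}_r = \frac{1}{|W_r|}\sum_{w\in W_r} \boldsymbol{\epsilon}(w)\,\overline{\chi_{\wedge^{r-2}}(w)}\,,
$$
where $\chi_{\wedge^{r-2}}$ is the character of $\wedge^{r-2}\mathbf C^{\mathcal L_r}$; since everything in sight is real-valued the conjugation is cosmetic. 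So the whole computation reduces to knowing $\chi_{\wedge^{r-2}}$ on each conjugacy class.

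First I would recall the standard formula expressing the character of an exterior power in terms of the character of the ambient representation evaluated at powers of the group element: for a representation with character $\chi$ of degree $\ell_r$, one has the generating-function identity
$$
\sum_{k\ge 0} \chi_{\wedge^k}(w)\, t^k = \exp\!\left(\sum_{m\ge 1} \frac{(-1)^{m-1}}{m}\,\chi(w^m)\, t^m\right),
$$
equivalently the Newton-type recursion $k\,\chi_{\wedge^k}(w) = \sum_{j=1}^k (-1)^{j-1}\chi(w^j)\,\chi_{\wedge^{k-j}}(w)$. Since the matrices $M_i = \operatorname{Mat}_{\mathcal B_r}(s_i)$ of the simple reflections are explicitly known (see \eqref{Eq:Mi}), and a representative $s_{\boldsymbol i_k}$ of each conjugacy class is a known word in the $s_i$, the traces $\chi_r(s_{\boldsymbol i_k}^{\,m}) = \operatorname{Trace}\bigl((M_{i_{1,k}}\cdots M_{i_{m_k,k}})^m\bigr)$ are immediately computable for all $m$ up to the order of the class; feeding these into the recursion yields $\chi_{\wedge^{r-2}}(s_{\boldsymbol i_k})$ on every class. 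One then forms the inner product of this character with the signature character $\boldsymbol{\epsilon}$ of $W_r$ (whose values on conjugacy classes are classical, being $(-1)^{\ell(w)}$), weighted by the class sizes read off from the character table of $W_r$, and reads off $m^{\boldsymbol{\epsilon}}_r$.

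For $r=4,5$ the group is $\mathfrak S_5$ resp.\ $(\mathbf Z/2\mathbf Z)^4\rtimes\mathfrak S_5$ and the computation can even be carried out by hand or cross-checked against the decomposition of $\mathbf C^{\mathcal L_r}$ already given in Proposition~\ref{Prop:Key} (for $r=4$, $\wedge^2$ of a $10$-dimensional module; for $r=5$, $\wedge^3$ of a $16$-dimensional one); the Appendix already contains the $W(D_5)$ character table needed here. For $r=6,7,8$ one uses GAP3/CHEVIE to supply the character table and class data of $W(E_r)$ and performs the same bookkeeping. The expected output is that $\boldsymbol{\epsilon}$ does not occur for $r=4,5,6,7$ and occurs with multiplicity $5$ for $r=8$, establishing \eqref{m-signa}. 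The main obstacle is purely computational scale rather than conceptual: for $E_7$ and $E_8$ the exterior powers $\wedge^5\mathbf C^{56}$ and $\wedge^6\mathbf C^{240}$ are enormous, so one must work entirely at the level of characters (never constructing the modules) and rely on a computer algebra system for the class multiplication and the final inner product; no genuinely new idea beyond the one already used to prove Proposition~\ref{Prop:Key} is required.
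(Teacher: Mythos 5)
Your proposal is correct and follows essentially the same route as the paper: the paper also works purely at the level of characters, computing $\wedge^{r-2}\chi_r$ from the values $\chi_r(g^m)$ (via the classical determinantal/Newton-type formula, or GAP's \texttt{AntiSymmetricParts}) and then taking the scalar product with the signature character in GAP, obtaining $0$ for $r=4,\ldots,7$ and $5$ for $r=8$. No substantive difference to report.
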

\begin{proof}
This has been obtained by means of formal computations in GAP (verified using Maple as well), see at the end of the Appendix for more details.  
\end{proof}

\subsubsection{}
\label{SSS:C-Kr}
As shown just above, it is not necessary to know the complete decomposition of $\mathbf C^{{\mathcal K}_r}$ into $W_r$-irreducibles to get our main result but we find this interesting. 
Following a similar approach to the one used in the proof of Proposition \ref{Prop:Key}, we have obtained that as $W_r$-modules,  one has  (up to isomorphism): 
\begin{align}
\label{Eq:Decomposition-Kr}
\mathbf C^{{\mathcal K}_4}= & \,  {\bf 1}\oplus V_{[41]}^4
 \nonumber \\ 
 \mathbf C^{{\mathcal K}_5}=&  \,  {\bf 1}\oplus V_{[-,41]}^4\oplus V_{[4,1]}^{5}
  \nonumber \\
   \mathbf C^{{\mathcal K}_6}= &  \,   {\bf 1}\oplus V^{6,1}\oplus V^{20,2} 
  \\
 \mathbf C^{{\mathcal K}_7}=&  \,    {\bf 1}\oplus V^{7,1}\oplus V^{27,2}\oplus V^{35,4}\oplus V^{56,3}
  \nonumber \\
\mathbf C^{{\mathcal K}_8}= &  \,  {\bf 1}\oplus V^{8,1}\oplus
V^{35,2}
\oplus V^{50,8} 
\oplus V^{84,4} 
\oplus V^{112,3}
\oplus V^{210,4}
\oplus V^{400,7}
\oplus V^{700,6} 
 \nonumber
%
%
\end{align}

 Let us make three remarks about these decompositions.
\begin{itemize}
\item
 Computing the decompositions in irreps above has been implemented quite easily on a  computer and was rather fast except for $\mathbf C^{{\mathcal K}_8}$. The  degree of the latter (one has $\lvert {\mathcal K}_8\lvert =2160$) and the relatively high number  of conjugacy classes in $W(E_8)$ (namely 112) make the calculation time explode (it took several hundreds of computations vs just a few hours for all the other cases). 
 It would be interesting to figure a conceptual approach for establishing the decompositions \eqref{Eq:Decomposition-Kr}.
\sk
\item Any $W_r$-irrep involved in one of the decompositions \eqref{Eq:Decomposition-Wr}  or \eqref{Eq:Decomposition-Kr} appears in it with multiplicity one. This is an interesting fact for which 
we have no explanation yet.
\mk
\item One should not be surprised by the facts that not only $\mathbf C^{{\mathcal L}_6}$ and 
$\mathbf C^{{\mathcal K}_6}$ have  the same dimension but that they share the same decomposition into $W_r$-irreps as well. Indeed, there is a well-known natural bijection $\beta : {{\mathcal L}_6}\rightarrow {{\mathcal K}_6}$ which is given by $l\mapsto -K3-l$ in terms of classes in the Picard group. Geometrically, this bijection can be described as that which associates to any line $l$ contained in the anticanonical embedding $ {\rm dP}_3$ (which is a cubic surface in $\mathbf P^3$) the pencil of conics on it induced by the plane sections containing $l$. Because 
the $W(E_6)$-action on ${\bf Pic}({\rm dP}_3)$ lets both the canonical class $K_3$ and the set of lines $ {\mathcal L}_6$ invariant, 
it follows that the bijection $\beta $ is $W(E_6)$-equivariant 
which in its turn gives immediately  that $\mathbf C^{{\mathcal L}_6}$ and 
$\mathbf C^{{\mathcal K}_6}$ are canonically isomorphic $W(E_6)$-modules.
\sk
\end{itemize}

\subsubsection{}
\label{SS:r=8-HLog6}
As a final remark, let us mention that both our approaches  above do not say much about the case $r=8$. Indeed, even if $\wedge ^6 \mathbf C^{ {\mathcal L}_8}$ admits five copies of the signature as $W(E_8)$-modules,  it may be the case that 
the image by $\Psi^6$ of $  {\bf hlog}^{6}$ be zero. 
Our representation-theoretic arguments to deal with the other cases 
do not allow to conclude here.  As for our attempt to check whether $\Psi^6( {\bf hlog}^{6})=0$ by means of brute force computations as described in \S\ref{SS:Elementary-Proof}, it failed: although elementary, the calculations involved were too cumbersome and time-consuming to be carried out successfully.

\section{\bf Miscellaneous}
\label{S:Miscellaneous}

In this section, we first discuss the case when $d=4$ with more details: 
after having made the identity ${\bf HLog}^{3}$ more explicit, 
 we discuss some very interesting web-geometric properties of $\boldsymbol{\mathcal W}_{ {\rm dP}_4}$.  We end in \S\ref{S:singular} by adding a few words about the rich and interesting case of singular del Pezzo surfaces.

\subsection{\bf Explicit version of  ${\bf HLog}^3$}
\label{S:singular}
The formal identity ${\bf HLog}^2$ is equivalent to Abel's relation
$\boldsymbol{\big(\mathcal Ab\big)}$
 which is written in quite explicit form.  It is interesting to make the other hyperlogarithmic identities ${\bf HLog}^{r-2}$ ($r=4,\ldots,7$) as explicit as possible as well. Here we discuss the case when $r=5$ (or equivalently, the case when $d=4$).\sk

A smooth del Pezzo quartic surface in $\mathbf P^4$ is (isomorphic to) the blow-up of $\mathbf P^2$ at the following five points:   $p_1=[1:0:0]$, $p_2=[0:1:0]$, $p_3[0:0:1]$,  $p_4[1:1:1]$ and $p_5=[\pi : \gamma : 1]$, 
for some parameters $\pi,\gamma\in \mathbf C$ such that these five points are in general position, which corresponds to the condition 
\begin{equation}
\label{Eq:PG-pi-gamma}
\pi\gamma(\pi-1)(\gamma-1)(\pi-\gamma)\neq 0
\end{equation}
that 
 we assume to be satisfied in what follows.  One denotes by $b: {\rm dP}_4\rightarrow \mathbf P^2$ 
the blow-up of the projective plane at these  five points. \sk

We want to make a bit more explicit the push-forward of $\boldsymbol{\mathcal W}_{\rm dP4}$ by $b$ (that we will denote in the same way) as well as  the  associated weight 3 hyperlogarithmic identity $({\bf H Log}^3)$ when expressed in the 
affine coordinates $x,y$ corresponding to the affine embedding $\mathbf C^2\hookrightarrow \mathbf P^2$, $(x,y)\mapsto [x:y:1]$. 
This is straightforward: relatively to these coodinates, the web $\boldsymbol{\mathcal W}_{\rm dP4}$ under scrutiny is easily seen as coinciding with the 
web $\boldsymbol{\mathcal W}\big( U_1,\ldots,U_{10}\big)$
 defined  by the following ten rational functions: 
\begin{align}
\label{Eq:WdP4-Ui}
U_1= & \,  x && U_6=     \frac{
   (1 - x)\gamma + x + (\pi - 1)y - \pi}{ (x - 1)(y-\gamma)}
 \nonumber \\
U_2= & \,  \frac1y
&& U_7=
          \frac{ 
    (x - y)(y-\gamma)}{ y(\pi y - \gamma x - \pi + \gamma + x - y)}
   \nonumber  \\
U_3= & \, \frac{y}{x} && 
U_8=  \frac{    
     -x(x(\gamma - 1) + (1-y )\pi - \gamma + y)}{ (x - y)(x - \pi)}
\\
U_4= & \, 
\frac{x-y}{x-1} 
&&
U_9= \,
          \frac{  
      y(x - \pi)}{ x (y-\gamma)} 
 \nonumber\\
  U_5= & \, 
   \frac{\gamma(\pi-x ) }{\pi y - \gamma x}
&& 
U_{10}=  \frac{
       x(y - 1)}{y (x - 1)} 
        \nonumber
\end{align} 

The rational curves in $\mathbf P^2$\footnote{The `directrices' of the points  $p_1,\ldots,p_5$ according to Du Val terminology in \cite{duVal}.}
  corresponding to the 11 lines in $X_5$ distinct from 
  the exceptional divisors $\ell_1,\ldots,\ell_5$
are the lines at infinity plus the closures in $\mathbf P^2$ of the affine curves with equation $\mathscr L_i =0$, where the $\mathscr L_i's$ are the components of the following 10-tuple
  of polynomials in $x,y$:
\begin{align*}
 \mathscr L= \big( 
\mathscr L_i 
\big)_{i=1}^{10}
=
\bigg( \, x \, , &  \,  y \, , \,   y -\gamma  \, , \,    x -1\, , \,  x -\pi \, , \,   x -y \, , \,  y -1\, , \,   \\
&\, \gamma\,  \Big((x -y) \pi +x (y -1)\Big)  -\pi \, y   \big(x -1\big)
\, , \,  
\gamma \, \big(x -1\big)  -\pi\, (y -1)  +y -x \, , \,   \gamma  \, x -\pi  \, y\, 
\bigg) \, .
\end{align*}
We denote by $A_{\mathscr L}$ the union in $\mathbf C^2$ of the curves cut out by the equations $\mathscr L_i=0$ with $i=1,\ldots,10$. Then  
$\boldsymbol{\mathcal W}\big( U_1,\ldots,U_{10}\big)$ is a non singular web on the Zariski open set $ b(Y_5)=b( X_5\setminus L_5)= \mathbf C^2 \setminus A_{\mathscr L}$.

One considers the  associated logarithmic forms 
 $ h_i=d\log(\mathscr L_i)={d\mathscr L_i}/{\mathscr L_i}$ with $i=1,\ldots,10$. 
 Then for each $i$, the spectrum\footnote{By definition, the `spectrum' of a rational map $f\in \mathbf C(x,y)$ is the set of values $\lambda \in \mathbf P^1$ such that  $f^{-1}(\lambda)$ is not irreducible.} of the rational map $U_i : \mathbf P^2\dashrightarrow \mathbf P^1$ with respect to the coordinates $x,y$ we are working with is 
 ${\mathfrak R}_i=\big( \, 0\, , \,  1 \, , \, {r}_i\, , \,  \infty \, \big)$
 where the $r_i$'s are the components of the following $10$-tuple of complex numbers: 
$$ 
{\mathfrak r}=
\big( {\mathfrak r}_i\big)_{i=1}^{10}
=
\left(\,
\pi 
 \, , \,
\frac{1}{\gamma}
 \, , \,
\frac{\gamma}{\pi}
 \, , \,
\frac{\pi-\gamma}{\pi -1}
 \, , \,
\frac{\gamma  (\pi -1)}{\pi-\gamma}
 \, , \,
\frac{\gamma -\pi}{\gamma}
 \, , \,
\frac{1}{1-\pi}
 \, , \,
1-\gamma
 \, , \,
\frac{\pi -1}{\gamma -1}
 \, , \,
\frac{\pi  (\gamma -1)}{\gamma  (\pi -1)} \, 
\right)\, .
$$
Remark that since \eqref{Eq:PG-pi-gamma} is assumed to hold true 
then:

$-$ all the $\mathscr L_i$'s are linearly independent as affine equations in $x$ and $y$; hence 

$-$ the same holds true for the $h_i$'s (as logarithmic forms in the same variables); and 

$-$ none of the $
r_i$'s coincides with an element of $\{0,1,\infty\}\subset \mathbf P^1$ hence each ${\mathfrak R}_i$ indeed is a 4-tuple\\
${}^{}$ \hspace{0.57cm} of pairwise distinct elements of $\mathbf P^1$.
\sk

For every $i=1,\ldots,10$,  one sets $R_i=(R_{i,1}, R_{i,2}, R_{i,3})$ 
where $R_{i,s}$ stands for the decomposition
of $d{\rm Log}(U_i-\mathfrak R_{i,s})$ (that is of 
$dU_i/U_i$, $dU_i/(U_i-1)$ and $dU_i/(U_i-{\mathfrak r}_i)$ 
for $s=1,2$ and $3$ respectively)
 as a linear combination of the $h_j$'s.  
 By straightforward computations, one gets that 
\begin{align}
\label{Eq:hh}
R_1=& \, \Big( h_{1} \, , \,h_{4} \, , \,h_{5}\Big)  \nonumber 
\\ R_2= & \, 
\Big(-h_{2} \, , \,h_{7}-h_{2} \, , \,h_{3}-h_{2}\Big) \nonumber 
\\
 R_3= & \, 
\Big(-h_{1}+h_{2} \, , \,-h_{1}+h_{6} \, , \,-h_{1}+h_{10}\Big)\nonumber 
\\  R_4=  & \, 
\Big(-h_{4}+h_{6} \, , \,h_{7}-h_{4} \, , \,h_{9}-h_{4}\Big)\nonumber 
\\
 R_5= & \, 
\Big(-h_{10}+h_{5} \, , \,h_{3}-h_{10} \, , \,h_{9}-h_{10}\Big) 
\\  R_6= & \, 
\Big( -h_{3}+h_{9}-h_{4} \, , \,h_{7}-h_{3}-h_{4}+h_{5} \, , \,-h_{3}-h_{4}+h_{8}\Big)\nonumber 
\\
 R_7= & \, 
\Big(h_{3}-h_{9}+h_{6}-h_{2} \, , \,h_{7}-h_{9}+h_{10}-h_{2} \, , \,-h_{9}-h_{2}+h_{8}\Big) \nonumber 
\\
 R_8= & \, 
\Big( h_{9}+h_{1}-h_{5}-h_{6} \, , \,h_{4}-h_{5}-h_{6}+h_{10} \, , \,-h_{5}-h_{6}+h_{8}\Big)\nonumber 
\\
 R_9= & \, 
\Big( 
-h_{3}-h_{1}+h_{5}+h_{2} \, , \,-h_{3}-h_{1}+h_{10} \, , \,-h_{3}-h_{1}+h_{8}\Big)
\nonumber  \\
 R_{10}= & \, 
\Big( 
h_{7}+h_{1}-h_{4}-h_{2} \, , \,-h_{4}+h_{6}-h_{2} \, , \,-h_{4}-h_{2}+h_{8}\, \Big)
\, . \nonumber 
\end{align}

For a triple $(a,b,c)$ of pairwise distinct points on $\mathbf C$ and given a base point $\zeta \in \mathbf C\setminus \{a,b,c\}$, we consider the  weight 3 hyperlogarithm $L_{a,b,c}^{\zeta}$ 
defined  by
$$ 
L_{a,b,c}^{\zeta}(z)= \int_{\xi}^{z}   \Bigg(\int_{\xi}^{u_3} \bigg(\int_{\xi}^{u_2} \frac{du_1}{u_1-c}\bigg) \frac{du_2}{u_2-b}\Bigg)  \frac{du_3}{u_3-a} $$
for any $z$ sufficiently close to $\zeta$, and we denote by $AI_{a,b,c}^{\zeta}$ its antisymmetrization:
\begin{equation*}
AI_{a,b,c}^{\zeta}=\frac{1}{6} \, \bigg(\, L_{a,b,c}^{\zeta}-L_{a,c,b}^{\zeta}-L_{b,a,c}^{\zeta}+L_{b,c,a}^{\zeta}+L_{c,a,b}^{\zeta}-L_{c,b,a}^{\zeta}
\bigg) 
\, .
\end{equation*}

We now fix a base point $\xi\in \mathbf C^2\setminus A_{\mathcal L}$ and 
for $i=1,\ldots,10$, we set  $\xi_i=U_i(\xi)\in \mathbf C\setminus \{0,1,r_i\}$  and
 $$
 AI^i=AI^{\xi_i}_{0,1,r_i}\, .
 $$

For any $i$, the symbol 
$\mathcal S\big( AI^i(U_i)\big)$
of $AI^i\big(U_i\big)=U_i^*(AI^i)$ is the antisymmetrization of $(dU_i/U_i)\otimes (dU_i/(U_i-1))\otimes (dU_i/(U_i-r_i)$ which,  according to \eqref{Eq:hh},  can be 
written in a unique way as a linear combination of the 
$h_{ijk}= h_i\otimes h_j\otimes h_k$'s for all triples $(i,j,k)$ such that $1\leq i\leq j\leq k\leq 10$.

Since 
$$ 
\sum_{i=1}^{10}  {\rm Asym}^{3}\left(  \left( 
\frac{dU_i}{
U_i} \right) 
\otimes \left(  \frac{dU_i}{
U_i-1} \right) 
\otimes \left(  
\frac{dU_i}{
U_i-r_i} \right)   \right) =0
$$
as an elementary linear algebra computation shows (in the vector space spanned by the weight 3 tensors $h_{ijk}$'s), we get that the functional relation
 \begin{equation}
\label{Eq:Eq-explicit-d=4}
\sum_{i=1}^{10} AI^i\big(U_i\big)=0
\end{equation}
is identically satisfied on any sufficiently small neighbourhood of $\zeta$. 

What has been made explicit in the above identity is the $U_i$'s and it would be interesting to give an explicit formula for each of the functions $AI^i$.  The consideration of formula \eqref{Eq:AI-z-3} leads us to think that this might be done. We hope to come back to this in a future work.

Lastly, considering the case of Abel's five terms identity, another question arises about the existence of a global real analytic version of \eqref{Eq:Eq-explicit-d=4}. We will discuss this a bit further in \S\ref{S:single-valued} below.

\subsection{\bf Some nice properties of ${\mathcal W}_{ {\rm dP}_4}$}
\label{S:Web-WdP4}
We can not refrain from mentioning, without proof, some remarkable properties of ${\mathcal W}_{ {\rm dP}_4}$. Everything discussed below will be established in our paper to come \cite{WdP4}, this subsection can just be seen as advertising it. For all the notions of web geometry considered here, see  
\cite{PP}. 

\subsubsection{}
\label{S:Web-WdP5-properties}
Abel's identity $\boldsymbol{(\mathcal Ab)}$ gives rise to an abelian relation (ab.\,AR) for $\boldsymbol{\mathcal B}$, that we will denote by  $\boldsymbol{\mathcal Ab}$.  
One sets  $\boldsymbol{AR}\big(\boldsymbol{\mathcal B}\big)$  for 
the space of abelian relations of 
$\boldsymbol{\mathcal B}$.  It has been known for a long time that this web 
 is very special as a planar web since it enjoys several remarkable 
 properties:  
\begin{itemize}
\item[] ${}^{}$ 
\hspace{-1cm}
{\bf [\,Non linearizability\hspace{0.03cm}].} {\it 
Bol's web is not linearizable hence not equivalent to an algebraic web.}
\mk 
\vspace{-0.4cm}
\item[] ${}^{}$ 
\hspace{-1cm}
{\bf [\,Abelian relations\hspace{0.03cm}].}
{\it All the abelian relations of $\boldsymbol{\mathcal B}$ are polylogarithmic, of weight 1 or 2:}\\
${}^{}$ \hspace{0.9cm} 
{\it   $-$ the subspace  $\boldsymbol{AR}_1\big(
\boldsymbol{\mathcal B}
\big)$ of logarithmic ARs of $\boldsymbol{\mathcal B}$ has dimension 5;}\\
${}^{}$ \hspace{0.9cm} 
{\it   $-$ the subspace  $\boldsymbol{AR}_2\big(
\boldsymbol{\mathcal B}
\big)$ of dilogarithmic ARs of $\boldsymbol{\mathcal B}$ 
is spanned by $\boldsymbol{\mathcal Ab}$; and}\\
${}^{}$ \hspace{0.9cm} 
  {\it $-$ there is a decomposition in direct sum}\, 
 $\boldsymbol{AR}\big(\boldsymbol{\mathcal B}\big)=\boldsymbol{AR}_{1}\big(\boldsymbol{\mathcal B}\big)\oplus \big\langle \boldsymbol{\mathcal Ab} \big\rangle 
$\, $(\star)$. 
\mk
\item[] ${}^{}$ 
\hspace{-1cm}
{\bf [\,Rank\hspace{0.03cm}].} {\it Bol's web has maximal rank hence is exceptional}.
\mk 
\item[] ${}^{}$ 
\hspace{-1cm}
{\bf [\,Weyl group action\hspace{0.03cm}].} {\it There is a natural action of 
$W(E_4)=\mathfrak S_5$ on  the space of abelian 
  relations regarding which  $(\star)$ is the decomposition into $\mathfrak S_5$-irreducible representations.}
  \mk 
\item[] ${}^{}$ 
\hspace{-1cm}
 {\bf [\,Hexagonality \& Characterization\hspace{0.03cm}].} {\it Bol's web is hexagonal and is essentially characterized by this property 
 since 
 `for any $k\geq 3$, a  hexagonal planar $k$-web  either is linearizable and   equivalent to a web formed by $k$ pencils of lines  or $k=5$ and it is equivalent to $\boldsymbol{\mathcal B}$'.}
 \mk
\item[] ${}^{}$ 
\hspace{-1cm}
  {\bf [\,Construction 
  \textit{\textbf{\`a la}}
  GM\hspace{0.03cm}].} {\it 
   $\boldsymbol{\mathcal B}$ 
   can be constructed following the approach of Gelfand and MacPherson {\rm \cite{GM}:}  it can be seen as  the quotient, under the action of the Cartan torus      $H_4\subset {\rm SL}_5(\mathbf C)$, of a natural $H_4$-equivariant web defined on the grassmannian 
 $G_2(\mathbf C^5)$.}
  \mk
\item[] ${}^{}$ 
\hspace{-1cm} 
{\bf [\,Cluster web\hspace{0.03cm}].} 
{\it Bol's web is of cluster type: up to equivalence, it can be defined by means of the $\mathscr  X$-cluster variables of the finite type cluster algebra of type $A_2$.}
 \end{itemize}

\subsubsection{} 
\label{S:Web-WdP4-properties}

Since $\boldsymbol{\mathcal B}$ is equivalent to $\boldsymbol{\mathcal W}_{ {\rm dP}_5}$ and because what can be considered as its most important feature, namely the fact it carries the dilogarithmic abelian relation  
$\boldsymbol{\mathcal Ab}$, generalizes to $\boldsymbol{\mathcal W}_{ {\rm dP}_4}$, one can wonder whether the nice properties listed above admit analogs for this 10-web.
It turns out that it is indeed the case since it can be proved that the following properties are satisfied: 

\begin{itemize}
\item[] ${}^{}$ 
\hspace{-1cm}
{\bf [\,Non linearizability\hspace{0.03cm}].} 
{\it $\boldsymbol{\mathcal W}_{ {\rm dP}_4}$ is not linearizable hence not equivalent to an algebraic web.}\mk 
\item[] ${}^{}$ 
\hspace{-1cm}
{\bf [\,Abelian relations\hspace{0.03cm}].}
{\it All the ARs of $\boldsymbol{\mathcal W}_{ {\rm dP}_4}$ are hyperlogarithmic, of weight 1, 2 or 3:\sk}\\
${}^{}$ \hspace{-0.7cm} 
{\it   $-$ the subspace  $\boldsymbol{AR}_1\big(
\boldsymbol{\mathcal W}_{ {\rm dP}_4}
\big)$ of logarithmic ARs of $\boldsymbol{\mathcal W}_{ {\rm dP}_4}$ has dimension 20;}\sk \\
 ${}^{}$ \hspace{-0.7cm}  {\it  
  $-$ the subspace  $\boldsymbol{AR}_2\big(
\boldsymbol{\mathcal W}_{ {\rm dP}_4}
\big)$ of weight 2 hyperlogarithmic ARs has dimension 15: it is the} \\
 ${}^{}$ \hspace{-0.8cm}   {\it \textcolor{white}{$-$} direct sum
 of two subspaces,  
a first one $\boldsymbol{AR}_{2}^{\rm sym}$  of symmetric ARs, of dimension 5 and }
\\
 ${}^{}$ \hspace{-0.8cm}   {\it \textcolor{white}{$-$} another one $ \boldsymbol{AR}_{2}^{\rm asym}$ of antisymmetric ARs, of dimension 10;} \sk\\ 
${}^{}$ \hspace{-0.7cm} 
{\it   $-$ the subspace  $\boldsymbol{AR}_3\big(
\boldsymbol{\mathcal W}_{ {\rm dP}_4}
\big)$ of weight 3 hyperlogarithmic ARs of $\boldsymbol{\mathcal W}_{ {\rm dP}_4}$ 
is spanned by ${\bf Hlog^3}$;}\sk \\
${}^{}$ \hspace{-0.7cm} 
  {\it $-$ there is a decomposition in direct sum} 
 \\ 
${}^{}$ \hspace{3cm} 
$\boldsymbol{AR}\big( 
\boldsymbol{\mathcal W}_{ {\rm dP}_4} 
 \big)=\boldsymbol{AR}_{1}\big(\boldsymbol{\mathcal W}_{ {\rm dP}_4}\big)\oplus 
 \Big(
 \boldsymbol{AR}_{2}^{\rm sym}\oplus 
 \boldsymbol{AR}_{2}^{\rm asym}
 \Big)
 \oplus
 \big\langle  \,
 {\bf Hlog^3}\, 
 \big\rangle 
$  \qquad  $(\clubsuit)$
\mk
\vspace{-0.5cm}
\item[] ${}^{}$ 
\hspace{-1cm}
{\bf [\,Rank\hspace{0.03cm}].} 
{\it The web $\boldsymbol{\mathcal W}_{ {\rm dP}_4}$ has maximal rank hence is exceptional}. 
\mk 
\item[] ${}^{}$ 
\hspace{-1cm}
{\bf [\,Weyl group action\hspace{0.03cm}].} 
{\it The Weyl group $W(D_5)$ acts naturally on $\boldsymbol{AR}\big(\boldsymbol{\mathcal W}_{ {\rm dP}_4} \big)$ and $(\clubsuit)$
 actually is  its decomposition into irreducible $W(D_5)$-modules.}
  \mk 
\item[] ${}^{}$ 
\hspace{-1cm}
 {\bf [\,Hexagonality \& Characterization\hspace{0.03cm}].} 
{\it $\boldsymbol{\mathcal W}_{ {\rm dP}_4}$ is characterized by  its hexagonal 3-subwebs.}\footnote{See \S\ref{SS:Combinatorial-characterization} further for a  more precise statement.}
 \mk
\item[] ${}^{}$ 
\hspace{-1.05cm}
  {\bf [\,Construction 
   \textit{\textbf{\`a la}}
   GM\hspace{0.03cm}].} 
 {\it  
  $\boldsymbol{\mathcal W}_{ {\rm dP}_4}$  can be seen as  the quotient, under the action of the Cartan torus   $H_5\subset {\rm S0}_{10}(\mathbf C)$, of a natural $H_5$-equivariant web defined on the 
spinor variety $\mathbb S_{5}$.
}
 \mk
\item[] ${}^{}$ 
\hspace{-1.05cm}
{\bf [\,Cluster web\hspace{0.03cm}].} 
{\it $\boldsymbol{\mathcal W}_{ {\rm dP}_4}$ is of cluster type: it can be obtained by means of some of the $\mathscr X$-cluster  variables of the finite type cluster algebra of type $D_4$.} 
  \mk
 \end{itemize}
\begin{rem}
{\bf 1.}\,Without additional explanations, the statements above may sound cryptic for non-specialists of web geometry. We will come back in detail to all of this in \cite{WdP4}. \vspace{0.2cm}\\ 
${}^{}$\quad {\bf 2.} Only very few of the statements above were known before: 
\begin{itemize}
\vspace{-0.15cm}
\item[$\boldsymbol{-}$]  by means of explicit computations on a computer, we established in \cite[\S7.3.3]{PThese} that the standard model of $\boldsymbol{\mathcal W}_{{\rm dP}_4}$ on the projective plane  has maximal rank hence is exceptional. But nothing else was known about its space of ARs at that time;
\sk
\item[$\boldsymbol{-}$] in 
\cite{Pereira}, Pereira introduced a way to associate a `{resonnance web}' 
$\boldsymbol{\mathcal W}_{{}^{} \hspace{-0.05cm}\mathcal A}$ 
to  any arrangements of lines  in the plane $\mathcal A$. When 
specializing this constuction to a certain arrangement ${\bf K}_5$, one recovers the planar model of $\boldsymbol{\mathcal W}_{{\rm dP}_4}$. Then 
Pereira showed that the  `{\it hyperlogarithmic rank}\footnote{This is the immediate `hyperlogarithmic' generalization of the notion of `{\it polylogarithmic rank'} considered in \cite[\S1.4.3.2]{ClusterWebs}.} of  $\boldsymbol{\mathcal W}_{{\bf K}_5}$ is $ (20,15,1)$ which shows in particular that this web carries a unique hyperlogarithmic AR of weight 3. However, this AR was not explicited by Pereira. 
\end{itemize} 
\end{rem}

One has to have in mind that considering that almost all (and possibly all 
{\it cf.}\,\S\ref{S:single-valued} or \S\ref{SS:A conceptual interpretation?} below) the remarkable properties of the pair Abel's identity ($\boldsymbol{\big(\mathcal Ab\big)}\simeq {\bf HLog}^2$)  / (Bol's web $\boldsymbol{\mathcal B}\simeq \boldsymbol{\mathcal W}_{ {\rm dP}_5}$)   admit formally very similar counterparts for the pair $ {\bf HLog}^3$ /  $\boldsymbol{\mathcal W}_{ {\rm dP}_4}$, then the latter may be considered as the most natural weight 3 generalization of the former.  The quest to find  natural generalizations to any weight of Abel's dilogarithmic identity is a long standing one which has received many great contributions by several authors. But except until very recently and for  very few cases, all of these were sticking to the case of polylogarithms or more precisely to that of iterated integrals on $\mathbf P^1\setminus \{0,1,\infty\}$ constructed from words in the alphabet with $\omega_0=du/u$ and $\omega_1=du/(u-1)$ as letters.  The discovery of the identities ${\bf Hlog}^{r-2}$ for $r=4,\ldots,7$ 
as well as the striking fact that $\boldsymbol{\mathcal B}\simeq \boldsymbol{\mathcal W}_{ {\rm dP}_5}$ and $\boldsymbol{\mathcal W}_{ {\rm dP}_4}$ look so similar regading the numerous remarkable properties they satisfy 
may be an indication that the genuine natural generalizations of the five terms relation of the dilogarithm have to be looked for in the more general setting of hyperlogarithms on $\mathbf P^1$.

What makes the functional equations of polylogarithms so interesting is the role these identities are playing in other areas of mathematics, such as in hyperbolic geometry or in the K-theory of number fields (regulators and Zagier's conjecture). But for the moment, we are not aware of any occurence of the identity ${\bf Hlog}^3$ within other fields of mathematics, which would be necessary for it to be fully recognized as the genuine generalization in weight 3 of Abel's identity.

\subsection{\bf Webs by conics of singular del Pezzo's surfaces}
\label{S:singular}
Del Pezzo surfaces have moduli and singular degenerations (see \cite{Demazure} for instance).  Moreover, the number of lines included in it as well as  that  of conic fibrations on such a surface may vary depending on the kind of singularities. One thus gets new examples of webs when considering the webs by conics on singular del Pezzo's which might carry interesting hyperlogarithmic ARs obtained  for instance   as suitable degeneracies of the ARs associated to the identities ${\bf HLog}^{r-2}$ ($r=4,\ldots,7$). 

A rich but sufficiently interesting case to be considered is the one of del Pezzo surfaces with finitely many singular points: these are classified as well as the lines contained in each (see 
\cite{Derenthal} for a recent reference). We will not discuss all the  cases here   but just consider one example which we find interesting: the case of Cayley's cubic.\sk

Cayley's surface $\mathscr C$ is the cubic surface in $\mathbf P^3$ with four nodal points (it is unique up to projective equivalence). It is the anticanonical image in $\mathbf P^3$ of the total space  ${ \mathscr C}'$
of the 
blow-up $\beta : { \mathscr C}' \rightarrow \mathbf P^2$ of $\mathbf P^2$ at the six vertices $v_1,\ldots,v_6$ of the non Fano arrangement $A_{\rm NF}$, pictured just below. 
 \begin{figure}[h!]
\begin{center}
\scalebox{0.25}{
 \includegraphics{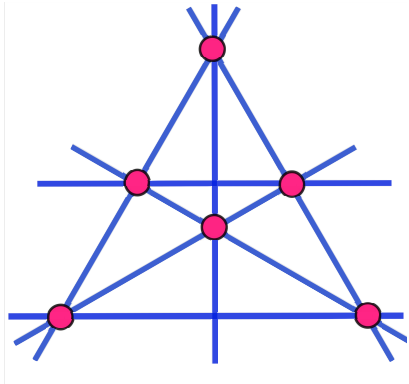}}
 \vspace{-0.1cm}
\caption{The non Fano arrangement $A_{\rm NF}$} 
\label{Fig:non-Fano}
\end{center}
\end{figure}

Four of the seven lines of $A_{\rm NF}$ pass through three of the $v_i$'s. 
The anticanonical image of the strict transform by $\beta$ of such a line is a node of $\mathscr C$ and all its singular points are obtained in this way.  

There are exactly nine lines contained in $\mathscr C$, namely 
the anticanonical images of 
the six exceptional divisors $\beta^{-1}(v_i)$  in ${\mathcal C}'$ for $i=1,\ldots,6$ and the 
images of the strict transforms of the three lines among the seven pictured in Figure \ref{Fig:non-Fano} which contain only two of the $v_i$'s.
 There are  in total nine pencils of conics on $\mathscr C$, which give rise to a 9-web on it, denoted by $\boldsymbol{\mathcal W}_{\mathscr C}$,  whose birational model on $\mathbf P^2$ is easy to describe: it  is the planar web formed by the six pencils of lines passing through the vertices of $A_{\rm NF}$ and by the three pencils of conics, one for each 4-tuple of vertices in general position among the $v_i$'s. With such a description, it is then easy to recognize 
 $\boldsymbol{\mathcal W}_{\mathscr C}$ hence to know some of the remarkable properties it satisfies: 
 \begin{quote}
 {\it  Up to equivalence, the web by conics $\boldsymbol{\mathcal W}_{\mathscr C}$ on Cayley's cubic surface coincides with Spence-Kummer web $\boldsymbol{\mathcal W}_{\hspace{-0.05cm}\mathcal S\mathcal K}$ associated to the identity with the same name satisfied by the trilogarithm.  Consequently, the web $\boldsymbol{\mathcal W}_{\mathscr C}$\sk
\begin{itemize}  
\item is not linearizable; 
\sk 
\item  has maximal rank hence is exceptional; 
\sk 
\item    has its space of ARs with a basis whose all elements 
 are polylogarithmic, except one (which is however polylogarithmic but up to a finite covering).
In particular, $\boldsymbol{\mathcal W}_{\mathscr C}$ carries two linearly independent polylogarithmic ARs of weight 3.
\end{itemize}}
 \end{quote}

We see some significant differences between the properties $\boldsymbol{\mathcal W}_{\mathscr C}$ satisfies and those enjoyed  by the 27-web  $\boldsymbol{\mathcal W}_{{\rm dP}_3}$ associated to `the' smooth cubic surface ${\rm dP}_3$: the latter carries a weight 4 hyperlogarithmic AR (namely ${\bf HLog}^4$) but does not have maximal rank whereas the former only carries trilogarithmic ARs and is exceptional.  It is not clear whether the properties of $\boldsymbol{\mathcal W}_{\mathscr C}$ can be deduced from those satisfied by $\boldsymbol{\mathcal W}_{{\rm dP}_3}$ via a degeneration process. This would be worth studying.\sk

As is well-known, the number of lines on a singular cubic surface in $\mathbf P^3$ with isolated singular points can be any one of the following integers 
1, 2, 3, 4, 5, 6, 7, 8, 9, 10, 11, 12, 15, 16, 21, or 27 and any one of these cases indeed occurs (see \cite[\S3]{BW} for instance). We thus get as many webs by conics on the corresponding cubic surfaces. We have already verified that many of these webs have interesting web-theoretic properties. 
\sk 

In a future work, we plan to study systematically and thoroughly webs by conics on del Pezzo surfaces with finitely many singular points.

%
%

%
%

%
%
\section{\bf Questions and perspectives}
\label{S:Questions}
We list below some questions or research projects that we find interesting and that the discovery of the hyperlogarithmic identities ${\bf HLog}^{r-2}$ 
suggested to us.

\subsection{\bf The case of $ {\rm dP}_1$}
As mentioned above in \S\ref{SS:r=8-HLog6}, we do not know whether the element ${\bf hlog}^{6}$ defined in 
\eqref{Eq:hero} gives rise to a hyperlogarithmic abelian relation for the 2160-webs $\boldsymbol{\mathcal W}_{ {\rm dP}_1}$. We will prove that this is indeed the case   in a forthcoming joint work with A.-M. Castravet.

\subsection{\bf Behavior of the ${\mathcal W}_{{\bf dP}_d}$'s in families and degeneracies}
\label{S:degen}
For $d\in \{2,3,4\}$ there is a $2(5-d)$-dimensional algebraic family 
$d{\mathcal P}_d$
of degree $d$ del Pezzo surfaces  giving rise to a family of webs 
${\mathscr W}_{{\rm dP}_d} \rightarrow d{\mathcal P}_d$ over it.  
First, it is interesting to ask how the webs $\boldsymbol{\mathcal W}_{{\rm dP}_d}$'s behave when ${\rm dP}_d$ varies in $d{\mathcal P}_d$.
\sk

Another kind of natural questions is  how the 
$\boldsymbol{\mathcal W}_{{\rm dP}_d}$'s may degenerate: there are modular compactifications 
$\overline{d{\mathcal P}}_d$ of the 
 moduli space $d{\mathcal P}_d$
 and it is natural to wonder first whether ${\mathscr W}_{{\rm dP}_d} \rightarrow d{\mathcal P}_d$ can be extended over $\overline{d{\mathcal P}}_d$, then if it is the case, what are the  properties (in particular those concerning its  ARs) of a web $\boldsymbol{\mathcal W}_{{\rm dP}'_d}$ on a degenerate del Pezzo surface 
 ${\rm dP}'_d\in \overline{d{\mathcal P}}_d \setminus d{\mathcal P}$. Since the 
 singular del Pezzo surfaces considered in \S\ref{S:singular}
are such degenerations, questions asked in the current subsection are generalizations of those of raised in the above subsection.

 In view of considering a very concrete example,  let us consider the webs on 
$\mathbf P^2$  
 defined by the 10 rational first integrals 
\eqref{Eq:WdP4-Ui},  denoted by 
$\boldsymbol{\mathcal W}_{{\rm dP}_4(\gamma,\pi)}$. Assuming that \eqref{Eq:PG-pi-gamma} is satisfied, each such web is a planar model of the 10-web by conics on a smooth del Pezzo quartic surface
which will be denoted by ${\rm dP}_4(\gamma,\pi)$.  Let us consider the naive specialization $\boldsymbol{\mathcal W}_{{\rm dP}_4,-1}
=\lim_{\pi,\gamma\rightarrow -1}\boldsymbol{\mathcal W}_{{\rm dP}_4(\gamma,\pi)}$
of the webs $\boldsymbol{\mathcal W}_{{\rm dP}_4(\gamma,\pi)}$'s: it is the web whose first integrals are the 
limits of all the $U_i$'s 
 in \eqref{Eq:WdP4-Ui}  when both parameters $\pi$ and $\gamma$ go to -1.
It is no longer a 10-web but the following 8-web 
$$\boldsymbol{\mathcal W}_{{\rm dP}_4,-1}
 =\boldsymbol{\mathcal W}\left(\, 
x\, , \,  
\frac{1}{y} \, , \,
 \frac{y}{x} \, , \,
  \frac{x - y}{x - 1}\, , \,
   \frac{x + 1}{x - y} \, , \,
    \frac{x - y}{(x - 1)(y + 1)} \, , \,
    \frac{y(x + 1)}{x(y + 1)} \, , \,
     \frac{x(y - 1)}{y(x - 1)}
  \,\right)\, . 
$$
One  verifies that this web has maximal rank 21 with all its ARs being generalized hyperlogarithms of weight 1 or 2.\footnote{Here, the adjective phrase `{\it generalized hyperlogarithmic'} refers to the notion of `{\it generalized iterated integrals'} considered in \cite[\S1.5.3]{ClusterWebs}.} 
   Denote by ${\bf Hlog}^3_{\gamma,\pi}$ the weight 3 hyperlogarithmic  functional identity carried by  $\boldsymbol{\mathcal W}_{{\rm dP}_4(\gamma,\pi)}$ for
$\gamma$ and $\pi$  generic (that is, satisfying \eqref{Eq:PG-pi-gamma}). From the description of the ARs of $\boldsymbol{\mathcal W}_{{\rm dP}_4,-1}$ just mentioned, it comes that ${\bf Hlog}^3_{\gamma,\pi}$ somehow disapears under the specialization $(\gamma,\pi)\rightarrow (-1,-1)$. It would be interesting to understand this phenomenon better and more generally to see how  the ARs of the generic 
web $\boldsymbol{\mathcal W}_{{\rm dP}_4(\gamma,\pi)}$ and those of the specialization $\boldsymbol{\mathcal W}_{{\rm dP}_4,-1}$ are related.

\subsection{\bf Webs by conics on real del Pezzo surfaces}
\label{S:singular}
Let $S$ be a  real del Pezzo surface with smooth complexification ${\rm dP}_d\subset \mathbf P^d$ (for $d\in \{2,3,4\}$ say).  
Then it is known that for some $S$, not all the lines contained in its complexification are defined over $\mathbf R$. In such a case, it is natural to consider the web 
$\boldsymbol{\mathcal W}_S$ 
by real pencils of conics on $S$, whose complexification is a proper subweb of $\boldsymbol{\mathcal W}_{{\rm dP}_d}$.  
 \begin{quote}
 {\it When the number of real lines contained in $S$ is strictly less than $l_r=\lvert \mathcal L_r\lvert$ (with $r=9-d$), what can be said about the real web $\boldsymbol{\mathcal W}_S$? In particular, does this web carry interesting (hyperlogarithmic?) abelian relations?}
\end{quote}

When $d=3$, it is known that the number  of real lines contained in $S$ can be anyone of the following numbers 3, 7, 15 or 27 and that each of them indeed occurs. This gives as many subcases of possibly interesting real webs by conics which would be worth studying.

\subsection{\bf Global single-valued versions}
\label{S:single-valued}
An interesting well-known feature of Abel's identity $\boldsymbol{\big(\mathcal Ab\big)}$
is that there is a global single-valued but real analytic version of it. More precisely, 
let $D$ stand for the {\it `Bloch-Wigner dilogarithm'}, which is  defined 
for any $z\in \mathbf P^1\setminus\{0,1,\infty\}$ 
by   
\begin{equation}
\label{Eq:Bloch-Wigner}
D(z)={\rm Im}\Big( {\bf L}{\rm i}_2(z) \Big)+{\rm Arg}\big(1-z\big)\cdot {\rm Log}\,\lvert \, z\, \lvert\, 
\end{equation}
where ${\rm Arg} : \mathbf C^*\rightarrow  ]-\pi, \pi]$ denotes   
the main branch of the complex argument.

The function $D$ is real-analytic and extends continuously to the whole Riemann sphere. Denoting again by $D$ this extension, one has $D(0)=D(1)=D(\infty)=0$  and the most remarkable of its features is that it satisfies the following global version of the identity $\boldsymbol{\big(\mathcal Ab\big)}$: one has 
 $$
D(x)-D(y)-D\bigg(\frac{x}{y}\bigg)-D\left(\frac{1-y}{1-x}\right)
+D\left(\frac{x(1-y)}{y(1-x)}\right)=0\,
$$
for any $x,y\in \mathbf P^1$ such that none of the five arguments of $D$ in this identity be indeterminate. \sk 

As we have considered them in this text, the hyperlogarithmic identities ${\bf HLog}^{r-2}$ $(r=4,\ldots,7$) are holomorphic, because the functions $AI^{r-2}_i$ involved in it are holomorphic.  On the other hand ${\bf HLog}^{r-2}$ is only locally satisfied, that because the $AI^{r-2}$'s extend to global but multivalued functions on the whole projective line.  
Remark that  ${\bf HLog}^{2}$ is a holomorphic version of Rogers dilogarithm $R$ which admits $D$ as a single-valued global cousin.  The point is that it has been proved in
 \cite[\S7]{Brown2004} (see also \cite[\S2.5]{VanhoveZerbini2018} or 
 \cite[\S3]{CDG2021}) that the most general hyperlogarithm $I$ admits a single-valued global version $I^{\rm sing}$.  Specializing this to the case of our antisymmetric hyperlogaritms $AI^{r-2}_i$ involved in 
 ${\bf HLog}^{r-2}$ gives 
 global single-valued functions $\widetilde{AI}^{\, r-2}_i$'s (for $i=1,\ldots, \kappa_r$) regarding which the following questions immediately arise for each $r=4,\ldots,7$: 
\begin{itemize}
\item[] ${}^{}$ \hspace{-0.8cm}1.  {\it can one give an explicit formula for $\widetilde{AI}^{r-2}_i$?
 }
\sk 
\item[] ${}^{}$ \hspace{-0.8cm}2. {\it do the $\widetilde{AI}^{\, r-2}_i$'s satisfy the global univalued version of ${\bf HLog}^{r-2}$, {\it i.e.}\,using the same notations  \\
${}^{}$ \hspace{-0.5cm}
 than in  Theorem \ref{Thm:main}, do we have 
$\sum_{i=1}^{\kappa_r} \epsilon_i \, \widetilde{AI}^{r-2}_i\big( U_i\big)=0$ identically 
on the whole $X_r$ ?}
\sk 
\end{itemize}
 
We believe that  for any $r\in \{4,\ldots,7\}$, the answer to 2. is positive, possibly up to considering suitable 
 single-valued versions of the antisymmetric hyperlogarithms $AI_i^{r-2}$. An approach for answering the questions might be to use Theorem 1.1 of  \cite[\S3]{CDG2021} but this would require first to build a `motivic lift' of the identity ${\bf HLog}^{r-2}$.

\subsection{\bf Motivic lifts}
In \cite{CDG2021}, the authors discuss motivic avatars of hyperlogarithms, some single-valued (still motivic) versions of these, classical realisations of them, as well as a motivic approach of the functional identities satisfied by the hyperlogarithms.  
Thanks to its algebraic nature ({\it cf.}\,\S\ref{SS-Key-Clef} where it is identified to a generator of the 
signature subrepresentation of the $W(E_r)$-module $\oplus_{\mathfrak c \in {\mathcal K}_r} \wedge^{r-2} \mathcal U_{\mathfrak c}$), we expect that ${\bf HLog}^{r-2}$  indeed can  be lifted into the `motivic world'. 
 It would be interesting to investigate this more rigourously. 

\subsection{\bf An approach 
 \textit{\textbf{\`a la}}
 Gelfand and MacPherson}
\label{SS:A conceptual interpretation?}

From our point of view, the representation-theoretic proof we give in sub-section 
\S\ref{SS-Key-Clef} that identity ${\bf HLog}^{r-2}$ holds true is somehow conceptual and uniform in $r$ but actually does not explain much about why this identity exists. 
\sk 

A remarkably nice construction of (the real form) of Abel's identity $\boldsymbol{\big(\mathcal Ab\big)}$ has been given by Gelfand and MacPherson in their wonderful paper \cite{GM}. In their approach:\sk
\begin{itemize}
\vspace{-0.3cm}
\item[$(i).$]
Bol's web is obtained as the quotient 
of a $H_5$-equivariant web on the grassmannian $G_2(\mathbf R^5)$ induced by 
five natural rational maps $\tilde f_i : G_2(\mathbf R^5)\dashrightarrow G_2(\mathbf R^4)$ 
of linear origin, where $H_5$ stands for the (diagonal) Cartan torus of ${\rm SL}_5(\mathbf R)$ (see \cite[\S1.1.3 and \S2.2.3]{GM}); 
\sk 
\item[$(ii).$] Abel's identity follows from several geometrical or homological facts: \sk
\begin{enumerate}
\item[1.] the closure of a generic $H_5$-orbit in  $G_2(\mathbf R^5)$ can be identified to a certain polytope,  the so-called hypersimplex $\Delta^2_3$ (see \cite[Thm.\,2.3.4]{GM});
\sk
\item[2.] a certain homological relation is satisfied between the class of the boundary of $\Delta^2_3$ and 
the homological classes of its facets when these are intrinsically considered (see \cite[Prop.\,2.1.3]{GM});\sk 
\item[3.]  the homological relation referred to above plus the version of Stokes' theorem for fibers integration give rise to a differential relation between the push-forwards under the action of the suitable Cartan tori, of the invariant 4-forms on $G_2(\mathbf R^5)$ and on $G_2(\mathbf R^4)$ representing the first Pontryagin class of the tautological bundles of these two grassmannian varieties (see \cite[Thm.\,1.2.2]{GM});
;\sk 
\item[4.] in the case under scrutiny, one of the members of the differential identity mentioned in 3.\,vanishes ({\it cf.}\,\cite[Thm.\,1.2.3]{GM}), which gives us after integration and normalization that the
sum of the five dilogarithmic terms in the left hand-side of $\boldsymbol{\big(\mathcal Ab\big)}$ vanishes identically.
\end{enumerate}
\end{itemize}
\sk

In \cite{WdP4}, we show that  all of the ingredients used by Gelfand and MacPherson to describe Bol's web $\boldsymbol{\mathcal B}\simeq \boldsymbol{\mathcal W}_{{\rm dP}_5}$ as a torus quotient of a $H_5$-equivariant web admit counterparts for the web $\boldsymbol{\mathcal W}_{{\rm dP}_4}$ (case $r=5$).  It is then natural to wonder whether 
the construction of $\boldsymbol{\big(\mathcal Ab\big)}\simeq {\bf HLog}^{2}$ mentioned in $(ii).$ just above can be generalized to the case of  ${\bf HLog}^{3}$ or not: 
\begin{itemize}
\item[] ${}^{}$ \hspace{-0.5cm} $\bullet$ {\it
does the identity ${\bf HLog}^{3}$, or a real version of it, can be constructed by means of the \\ Gelfand-MacPherson theory of `generalized dilogarithmic forms' developed in \cite{GM}?
 }
\end{itemize}

We believe that this question can be answered by the affirmative. If indeed true, we expect that this would furnish a conceptual explanation of why ${\bf HLog}^{3}$ holds true: in the case of ${\bf HLog}^{2}$, a key result is the homological identity about the hypersimplex 
$\Delta_m^n$ referred to in $(ii).2$. As mentioned in \cite[\S2.1.3]{GM}, this formula `{\it 
may be interpreted as computing the connecting
homomorphism for the triple $(\Delta_m^n, \partial \Delta_m^n, c)$ where $c$ is the codimension two skeleton of $\Delta_m^n$}'. We believe that something similar holds true for identity  ${\bf HLog}^{3}$, but in this case with respect to the polytope involved in the description {\it \`a la} Gelfand and MacPherson of $\boldsymbol{\mathcal W}_{ {\rm dP}_4}$, which is the $D_5$-moment polytope $\Delta_{D_5}$\footnote{This convex polytope is known by many other names: the weight polytope of type $D_5$, the 5-dimensional demihypercube or half measure polytope, Gosset's polytope  $1_{21}$ or even the demipenteract.}: we expect that in its algebraic (tensorial) form ({\it cf.}\,\S\ref{SS-Key-Clef}),  ${\bf HLog}^{3}$ may be interpreted as a kind of dual of a homological identity computing the connecting homomorphism for the triple $(\Delta_{D_5}, \partial \Delta_{D_5}, c_{D_5})$ where $c_{D_5}$ is the codimension two skeleton of $\Delta_{D_5}$. If this is indeed true, this  possibly could be extended to the other identities ${\bf HLog}^{r-2}$ for $r=4,\ldots,7$ as well.  We hope to clarify this in a future work.

\vspace{-0.2cm}
 \subsection{\bf Combinatorial characterization}
 \label{SS:Combinatorial-characterization}
In \S\ref{S:Web-WdP4} we discussed some nice properties of the webs $\boldsymbol{\mathcal W}_{{\rm dP}_5}$ and $\boldsymbol{\mathcal W}_{{\rm dP}_4}$, in particular the fact that both are characterized by their hexagonal 3-subwebs. But we did not explain the precise meaning of this. In this subsection, we consider this more generally and more rigorously and ask whether the characterization just mentioned extends to the other webs $\boldsymbol{\mathcal W}_{{\rm dP}_d}$'s.\sk

\vspace{-0.1cm}
Let us introduce general notions which we believe are relevant 
for studying webs. Let $\boldsymbol{\mathcal W}$ be a planar $d$-web, formed by $d$ distinct foliations $\mathcal F_1,\ldots,\mathcal F_d$ on its definition domain in $\mathbf C^2$.

For any $k=3,\ldots, d$, one sets/defines
\begin{itemize}
\vspace{-0.15cm}
\item ${\rm SubW}_k(\boldsymbol{\mathcal W})$ is the set of $k$-subwebs of  $\boldsymbol{\mathcal W}$;
\item ${\rm SubW}(\boldsymbol{\mathcal W})$ is the set of subwebs of  $\boldsymbol{\mathcal W}$ \big({\it i.e.}\,${\rm SubW}(\boldsymbol{\mathcal W})=\cup_{k=3}^d {\rm SubW}_k(\boldsymbol{\mathcal W})$\big);
\item $\boldsymbol{AR}_3(\boldsymbol{\mathcal W})$  denotes the subspace of $\boldsymbol{AR}(\boldsymbol{\mathcal W})$ spanned by the ARs of the 3-subwebs of $\boldsymbol{\mathcal W}$;  
\item ${r}_3(\boldsymbol{\mathcal W})$ stands for the dimension of $\boldsymbol{AR}_3(\boldsymbol{\mathcal W})$;  
\item  $r_{ \boldsymbol{\mathcal W}} : {\rm SubW}(\boldsymbol{\mathcal W}) \rightarrow \mathbf N$ is the function associating $r_3(\boldsymbol{W})$ to any subweb $\boldsymbol{W}$ of 
$\boldsymbol{\mathcal W}$;
\item $r_{ \boldsymbol{\mathcal W},k} : {\rm SubW}_k(\boldsymbol{\mathcal W}) \rightarrow \mathbf N$ is the   restriction of $r_{ \boldsymbol{\mathcal W}}$ to ${\rm SubW}_k(\boldsymbol{\mathcal W})$.
\end{itemize}

The functions $r_{ \boldsymbol{\mathcal W},k}$'s are combinatorial objects invariantly attached to  $\boldsymbol{\mathcal W}$. They allow to state in a nice and concise way some properties of webs.  For instance, let us consider the interesting case of Bol's web $\boldsymbol{\mathcal B}\simeq 
\boldsymbol{\mathcal W}_{{\rm dP}_5}$: since it is hexagonal, one has identically $r_{ \boldsymbol{\mathcal B},3}=1$ and $r_{ \boldsymbol{\mathcal B},4} =3$ and one verifies that  $r_{ \boldsymbol{\mathcal B},5}\equiv 5$.  On the other hand,  if $\boldsymbol{\mathcal L}$ is a planar  web formed by 5 pencils of lines, then $r_{ \boldsymbol{\mathcal L},3}\equiv 1$ (hexagonality), 
$r_{ \boldsymbol{\mathcal L},4} \equiv 3$ and $r_{ \boldsymbol{\mathcal L},5}\equiv 6$. Hence Bol's characterization of $\boldsymbol{\mathcal B}$ can be stated nicely as follows: 

\noindent {\bf Theorem.} (Bol \cite{Bol})
{\it Bol's web is characterized by $r_{ \boldsymbol{\mathcal B}}$: 
any 5-web $\boldsymbol{\mathcal W}$ such that $r_{ \boldsymbol{\mathcal W},3}\equiv 1$
and $r_{ \boldsymbol{\mathcal W},5}\leq 5$ is equivalent to Bol's web.}

This result shows that the  $r_{ \boldsymbol{\mathcal W},k}$'s can be useful to study webs up to equivalence. Given two planar $d$-webs $ \boldsymbol{\mathcal W}'$ and $ \boldsymbol{\mathcal W}''$ as above, let us say that the
functions 
 $r_{\boldsymbol{\mathcal W}'}$ and $r_{\boldsymbol{\mathcal W}''}$ (or 
$r_{\boldsymbol{\mathcal W}',3}$ and $r_{\boldsymbol{\mathcal W}'',3}$) are equivalent, denoted by $r_{\boldsymbol{\mathcal W}'}\sim r_{\boldsymbol{\mathcal W}''}$ (and similarly for $r_{\boldsymbol{\mathcal W}',3}$ and $r_{\boldsymbol{\mathcal W}'',3}$) if both coincide,  
possibly up to a relabeling of the foliations composing one of them.

In \cite{Burau}, Burau first determines $r_{\boldsymbol{\mathcal W}_{{\rm dP}_3},3}$  and then proves the following characterization result:

\noindent
 {\bf Theorem.} (Burau \cite{Burau})
{\it The class of 27-webs $\boldsymbol{\mathcal W}_{{\rm dP}_3}$'s is characterized by $r_{\boldsymbol{\mathcal W}_{{\rm dP}_3},3}$: any 27-web 
$\boldsymbol{\mathcal W}$ such that $r_{ \boldsymbol{\mathcal W},3}\sim r_{\boldsymbol{\mathcal W}_{{\rm dP}_3},3}$
 is equivalent to  the 27-web in conics on a cubic surface in $\mathbf P^3$.}

\noindent
Considering the two theorems above, one is naturally led to wonder if such results would not hold for all del Pezzo webs $\boldsymbol{\mathcal W}_{{\rm dP}_d}$. Here are a few questions regarding this that we find interesting: 
\begin{enumerate}
\item[1.] {\it A 3-subweb of $\boldsymbol{\mathcal W}_{{\rm dP}_d}$ corresponds to a triple
$\boldsymbol{\mathfrak c}$ of pairwise distinct elements of $ \boldsymbol{\mathcal K}_r$. Does the hexagonality of the associated 3-web $\boldsymbol{\mathcal W}(\boldsymbol{\mathfrak c})$ 
only depend on the $W_r$-orbit of $\boldsymbol{\mathfrak c}$ 
in $\big(\boldsymbol{\mathcal K}_r\big)^3$?}
\sk
\vspace{-0.4cm}
\item[2.] {\it Determine the functions $r_{ \boldsymbol{\mathcal W}_{{\rm dP}_d},k}$ for  $k$ small. When there are moduli for the considered del Pezzo surfaces ({\it i.e.}\,when $d\leq 4$), show that all of the $r_{ \boldsymbol{\mathcal W}_{{\rm dP}_d},k}$'s do not depend on ${\rm dP}_d$.} 
\sk
\item[3.] {\it Does $r_{ \boldsymbol{\mathcal W}_{{\rm dP}_d},3}$ determine the class of webs $ \boldsymbol{\mathcal W}_{{\rm dP}_d}$'s, {\it i.e.}\,is the following statement true:  `any $\kappa_r$-web $\boldsymbol{\mathcal W}$ such that 
$r_{ \boldsymbol{\mathcal W},3}\sim r_{ \boldsymbol{\mathcal W}_{{\rm dP}_d},3}$ is 
necessarily equivalent to a web  $\boldsymbol{\mathcal W}_{{\rm dP}_d}$'?} 
\end{enumerate}
The answers to these questions are known when $d=5$ (not to mention the trivial case $d=6$). Burau's theorem above gives an answer to 3.\,in the case when $d=3$.
The statement {\bf [\,Hexagonality \& Characterization\,]} 
mentioned in \S\ref{S:Web-WdP4-properties} means that we have verified that the answer to the third question for  $ \boldsymbol{\mathcal W}_{{\rm dP}_4}$ is affirmative as well. 
More generally, we expect that the answers to 1.\,and 3.\,are affirmative for any $d$. 
For the moment, the considered cases have been solved by ad hoc arguments. It would be interesting to have a uniform approach (independent on $d$).

 \subsection{\bf Other surfaces}
The setting we are working with in this paper can be very roughly described as follows: we deal with some surfaces (del Pezzo's in this case), each one carrying a finite number of fibrations of a fixed type, with the property that any component (a line in the case under scrutiny) of a non-irreducible fiber of one of these fibrations is also a component of a fiber of the same kind of another of the fibrations considered.
Stating things in that way makes us realise that such situations can be encountered for other kinds of algebraic surfaces than del Pezzo's.  We will discuss in a rather vague way 
the case of some elliptic projective surfaces. 
\sk 

Thanks to Kodaira, there is a classification of the analytic type 
of the possible singular fibers of an elliptic fibration on a projective surface $S$. In particular, 
there is only a finite number of possible singular fibers and all of these have rational  irreducible components. Then given a web $\boldsymbol{\mathcal W}_S$ formed by a finite number of elliptic fibrations  on $S$, all over $\mathbf P^1$ say, the components of their singular fibers form a finite set $\mathcal L_{\boldsymbol{\mathcal W}_S}$ of rational curves in $S$ and if  $L_{\boldsymbol{\mathcal W}_S}$ stands for the reduced effective divisor in $S$ whose components are the elements of $L_{\boldsymbol{\mathcal W}_S}$, it follows that 
$\boldsymbol{\mathcal W}_S$ is singularity free on the complement of 
$L_{\boldsymbol{\mathcal W}_S}$ in 
$S$. Given an elliptic fibration $\pi : S\rightarrow \mathbf P^1$ in $\boldsymbol{\mathcal W}_S$, let 
$\sigma_\pi$ be the set of its singular values ({\it i.e.} $\sigma_\pi=\{\, \lambda \in \mathbf P^1\, \lvert \, \pi^{-1}(\lambda) \mbox{  is singular}\,\}$).  
We denote by $H_\pi$ the space of global logarithmic 1-forms on $\mathbf P^1$ with poles in $\sigma_\pi$ and we define $H_{\boldsymbol{\mathcal W}_S}$ as the space of rational 1-forms on $S$ spanned by the pull-backs $\pi^* (H_\pi)$ for all $\pi$ in 
$\boldsymbol{\mathcal W}_S$. We can play now the same game as the one played successfully in this paper with del Pezzo's surfaces: for a given weight $w>0$, we are interested in  the kernel of the linear map $\oplus_{\pi \in \boldsymbol{\mathcal W}_S} \pi^* \big((H_\pi)^{\otimes w}\big)\rightarrow (H_{\boldsymbol{\mathcal W}_S})^{\otimes w}$ since any element in it will give rise to a weight $w$ hyperlogarithmic AR for 
$\boldsymbol{\mathcal W}_S$.

An interesting class of surfaces to which the approach just sketched could apply is that of projective $K3$ surfaces with a finite automorphism group.
Surfaces of this kind have been classified by different authors (see \cite{Roulleau} and the reference therein) and any of them admits only a finite number of elliptic fibrations (by \cite[Cor.\,2.7]{Sterk}).
Given such a $K3$ surface $X$, 
 the finite set of elliptic fibrations on it (up to $\mathcal J_0(X)$-equivalence, see \cite{BKW}) gives rise to 
the `{\it elliptic web}' of $X$, denoted by $\boldsymbol{\mathcal E \hspace{-0.05cm}\mathcal W}_X$ which may carry interesting hyperlogarithmic abelian relations.  \sk

Note that K3 surfaces share with del Pezzo surfaces similar features relatively to their Picard lattice: in both cases, there is a notion of `root', of `root system', with an associated Weyl group, etc. ({\it e.g.}\,see \cite[\S4]{BL}).  
 All this suggests that elliptic webs on K3 surfaces, or possibly just some of them, may carry interesting ARs. We are not aware of any results in this direction for now, everything remains to be done.

\subsection{\bf The hyperlogarithms $AI^{r-2}$ and antisymmetization in the realm of polylogarithmic functions.}
As far as we know, the totally antisymmetrized hyperlogarithms $AI^{r-2}$ (defined for any $r\geq 4$) have  already been  considered  in the mathematical literature
only in the case when $r=4$, in which case one recovers (a holomorphic version of)  Rogers' dilogarithm (see \S\ref{SSS:lalala}  above).
Indeed,  $AI^{2}$ is nothing but a holomorphic version (localized at the chosen base point)  of Rogers' dilogarithm $R$ hence it is polylogarithmic in the 
most basic way. It follows from 
\eqref{F:Asym-s} that $AI^{3}$ and $AI^{4}$ can be expressed in terms of (holomorphic) polylogarithmic functions (see also \eqref{Eq:AI-z-3} for $AI^{3}$).  These remarks lead to wonder about the other cases: 
\begin{quote}
{\it  $\bullet $\,  for $r\geq 5$, can $AI^{r-2}$ be expressed by means of classical polylogarithms?} 
\end{quote}
\sk

We are aware of only a few examples where  antisymmetrizations do occur 
in the realm of  (multivariable) polylogarithms:\footnote{Antisymmetrizations  also occur in the realm of Nielsen polylogarithms, see \cite{CGR}.}
\sk

\begin{itemize}
\item[$-$] first in  \cite{Goncharov} (for $n=3$) then in 
\cite{GoncharovSimple}  for any $n\geq 3$, Goncharov defines   what he calls the   $n$-th {\it grassmannian polylogarithm} $L_n^G$ by means of antisymmetrizations (see Definition 4.1 of \cite{GoncharovSimple}).  For any $n\geq 3$, the function 
$L_n^G$ is a holomorphic but multi-valued function defined on 
the moduli space ${\rm Conf}_{2n}^0(\mathbf P^{n-1})$
of generic configurations of $2n$ points in $\mathbf P^{n-1}$. For suitable choices of the branches of $L_n^G$ involved,  the 
following identity is (locally) 
satisfied   for 
any $\boldsymbol{y}=[y_1,\ldots,y_n]\in {\rm Conf}_{2n+1}^0(\mathbf P^{n-1})$: 
$$
\sum_{i=1}^{2n+1} (-1)^i L_n^G\big(  \boldsymbol{y}_{\hat \imath}
\big) =0
$$
where $\boldsymbol{y}_{\hat \imath}$ stands for $[y_1,\ldots, \widehat{y_i},\ldots , y_{2n}]\in {\rm Conf}^0_{2n}(\mathbf P^{n-1})$ for any $i=1,\ldots,2n+1$;\sk 
\item[$-$]  in the recent preprints  \cite{Rudenko} and \cite{MR}, the authors define what they call `{\it quadrangular polylogarithms}' ${\rm QLi}_n$ and   `{\it cluster grasmmannian polylogarithms}' ${\rm GLi}_n$ respectively, again using some antisymmetrizations, but  partial ones now (see formulas (5.1)  and (4.12) in \cite{Rudenko} and \cite{MR} respectively).  From the point of view we are interested in in this text, the  cluster grassmannian polylogarithms are most interesting since 
for any $n\geq 2$, ${\rm GLi}_n$ satisfies an interesting functional equation (see \cite[Theorem 1.5.(4)]{MR}). 
\end{itemize}

These results suggest (at least to the author of these lines) the following question: 

\begin{quote}
{\it  $\bullet $\,  for $n\geq 2$,  are Goncharov's grassmannian polylogarithm $L_n^G$, 
 Matveiakin-\\ ${}^{}$ \hspace{0.1cm} Rudenko cluster grassmannian polylogarithm ${\rm GLi}_n$ and the single-variable\\ 
 ${}^{}$ \hspace{0.1cm} hyperlogarithm $AI^{n}$ considered in this text, related in any way? } 
\end{quote}
The answer to this question being obvious (and affirmative) for $n=2$, the first case to be considered is the one when $n=3$.  For instance, one can ask whether $AI^{3}$ can be expressed by means of $L_3^G$ or of ${\rm GLi}_3$ or not.

\subsection{\bf Curvilinear webs on Fano blow-ups of projective spaces.}
 The planar webs considered in this paper arise quite naturally when considering blow-ups of the projective plane which are Fano. If one is interested by possible higher dimensional generalizations of these webs, it is natural, in a first step at least,  to consider the blow-ups of projective spaces of dimension bigger than 2 which are Fano manifolds again. 
 
 In this subsection, we would like to discuss two cases, one which is classical and has been studied recently, for which most of things are clear, a second one which looks very similar but which has not been considered beyond the two dimensional case yet.
 The former case is that of the blow-ups of $\mathbf P^n$ in $n+2$ points, the latter in $n+3$ points, this for any $n\geq 2$. \sk 
 
We fix $n\geq 2$ and a  subset $P\subset \mathbf P^n$ of  $N$ points $p_1,\ldots,p_N$ in general position, with $N=n+2$ or $n+3$.  We denote by $\boldsymbol{\mathcal W}_P$ the web by rational curves on $\mathbf P^n$ formed 
by the following foliations: 
\begin{enumerate}
\item[$-$] the linear radial foliations $\mathcal L_{p_i}$ of lines passing through $p_i$, for $i=1,\ldots,N$;
\sk
 \item[$-$] the congruences $\mathcal C_J$ of degree $n$ rational normal curves (RNCs) passing through the $p_j$'s for $j\in J$, for any $J\subset \{1,\ldots,N\}$ of cardinality $n+2$. 
\sk
\end{enumerate}
 
 When $N=n+2$, then 
 $$\boldsymbol{\mathcal W}_P=\boldsymbol{\mathcal W}\Big(\mathcal L_{p_1},\ldots,\mathcal L_{p_{n+2}}, \mathcal C \Big)$$
  where $\mathcal C$ stands here for the congruence of degree $n$ RNCs passing through all the $p_i$'s. In particular, 
 $\boldsymbol{\mathcal W}_P$ is a $(n+3)$-web for $n$ arbitrary and  it is equivalent to 
 $\boldsymbol{\mathcal W}_{ {\rm dP}_5}\simeq 
 \boldsymbol{\mathcal B}
 $  when $n=2$. 
 
 In case $N=n+3$, let $\mathcal C_{\widehat \imath}$ stand for the congruence 
 associated to $\{1,\dots,n+3\}\setminus \{ i\}$ for $i=1,\ldots,n+3$. Then one has 
$$\boldsymbol{\mathcal W}_P=\boldsymbol{\mathcal W}\Big(\, \mathcal L_{p_1},\ldots,\mathcal L_{p_{n+2}}, \mathcal C_{\widehat 1},  \ldots,  \mathcal C_{\widehat{n+3}} \,  \Big)$$
 which thus is a $2(n+3)$-web. In particular when $n=2$, one recovers (a birational model of) 
  $\boldsymbol{\mathcal W}_{ {\rm dP}_4}$.

\subsubsection{The curvilinear webs $\boldsymbol{\mathcal W}_P$ when $N=n+2$}
When $N=n+2$ (for any $n\geq 2$), then the web $\boldsymbol{\mathcal W}_P$ is a birational model of the web $\boldsymbol{\mathcal W}_{\mathcal M_{0,n+3}}$ defined on the moduli space 
$\mathcal M_{0,n+3}$ by the $n+3$ maps $f_i:\mathcal M_{0,n+3} \rightarrow \mathcal M_{0,n+2}$ induced by forgetting the $i$-th point (for $i=1,\ldots,n+3$). 

These webs have been studied by a few classical authors and more recently by Damiano and in the paper \cite{PirioM0n+3} (see the references therein).  Building on Gelfand-MacPherson theory of `generalized dilogarithmic forms', Damiano investigated the $(n-1)$-abelian relations of $\boldsymbol{\mathcal W}_{\mathcal M_{0,n+3}}$ for any $n\geq 2$, and claimed that the structure of the corresponding space of ARs $\boldsymbol{AR}^{(n-1)}
\big( \boldsymbol{\mathcal W}_{\mathcal M_{0,n+3}} \big) $ 
was for $n$ arbitrary the most straightforward generalization one can think of that of $\boldsymbol{\mathcal W}_{\mathcal M_{0,5}}\simeq \boldsymbol{\mathcal B}$ ({\it cf.}\,\S\ref{S:Web-WdP5-properties} above). In \cite{PirioM0n+3} it is proved that Damiano's arguments and claims are flawed when $n$ is odd and a correct way to handle this case is given.

The situation for $n\geq 2$ arbitrary is similar by many aspects to that of del Pezzo's surfaces considered in this paper: if $X_P$ denotes the blow-up of $\mathbf P^n$ at the $p_i$'s, then ${\bf Pic}(X_P)\simeq \mathbf Z^{n+3}$ and one can define `roots'  which are `$(-1)$-divisors' and which form a root system of Dynkin type $A_{n+2}$ in the orthogonal 
 $(K_{X_P})^\perp \subset {\bf Pic}(X_P)$ ({\it cf.}\,Section 2 of \cite{CT}).  The associated  Weyl group $W_P\simeq W(A_{n+2})$  in this case is naturally isomorphic 
 to ${\rm Aut}\big( {\mathcal M_{0,n+3}}\big) \simeq \mathfrak S_{n+3}$ and acts naturally  on  $\boldsymbol{AR}^{(n-1)}
\big( \boldsymbol{\mathcal W}_{\mathcal M_{0,n+3}} \big) $.  And one of the main result about the latter space concerns its structure as a $\mathfrak S_{n+3}$-module.\footnote{See \cite{PirioM0n+3} for more details.}  
One denotes by $\boldsymbol{AR}^{(n-1)}_C
\big( \boldsymbol{\mathcal W}_{\mathcal M_{0,n+3}} \big) $ the subspace generated by the 
$(n-1)$-ARs of the $(n+1)$-subwebs of $\boldsymbol{\mathcal W}_{\mathcal M_{0,n+3}}$. 
It is clearly a subrepesentation of the whole space $\boldsymbol{AR}^{(n-1)}
\big( \boldsymbol{\mathcal W}_{\mathcal M_{0,n+3}} \big) $. Using Gelfand-MacPherson theory and following Damiano, one constructs a special abelian relation $\boldsymbol{\mathcal E}_n \in \boldsymbol{AR}^{(n-1)}
\big( \boldsymbol{\mathcal W}_{\mathcal M_{0,n+3}} \big) $ from the ($n$-th wedge power of the) ${\rm SO}_{n+3}(\mathbf R)$-invariant representative of the Euler class of the tautological bundle on the oriented grassmannian $G_2^{or}\big(\mathbf R^{n+3}\big)$. 
Then the following dichotomy holds true, according to the parity of $n$: 
\sk
\begin{itemize}
\item[$\bullet$] when $n$ is odd, one has $\boldsymbol{\mathcal E}_n \in \boldsymbol{AR}^{(n-1)}
\big( \boldsymbol{\mathcal W}_{\mathcal M_{0,n+3}} \big) =
\boldsymbol{AR}^{(n-1)}_C
\big( \boldsymbol{\mathcal W}_{\mathcal M_{0,n+3}} \big) 
$ and the latter space is an irreducible  $\mathfrak S_{n+3}$-representation, with Young symbol $\big[31^n\big]$;
\sk
\item[$\bullet$] when $n$ is even
\begin{itemize}
\item there is a decomposition in direct sum of 
$\mathfrak S_{n+3}$-irreps 
$$ \boldsymbol{AR}^{(n-1)}
\big( \boldsymbol{\mathcal W}_{\mathcal M_{0,n+3}} \big) =
\boldsymbol{AR}^{(n-1)}_C
\big( \boldsymbol{\mathcal W}_{\mathcal M_{0,n+3}} \big) 
\oplus \big\langle  \boldsymbol{\mathcal E}_n  \big\rangle 
$$ with corresponding Young symbols $\big[221^{n-1}\big]$ and $\big[1^{n+3}\big]$;
\sk 
\item at least for $n\leq 12$ (but conjecturally for any $n$ even) there is an explicit formula for the component of $\boldsymbol{\mathcal E}_n$, involving only rational expressions and logarithms of rational expressions ({\it cf.}\,
 \cite[Prop.\,5.28]{PirioM0n+3}). In short: $\boldsymbol{\mathcal E}_n$ is a  polygarithmic AR of weight 1.\footnote{
 \label{foofoo}When $n=2$,  the weight 2 polylogarithmic identity ${\bf HLog}^2\simeq \boldsymbol{\big(\mathcal Ab\big)}$ and the weight 1 abelian relation $\boldsymbol{\mathcal E}_2$ are related as follows: the latter corresponds to the differential identity obtained by taking the total derivative of the former functional equation.}
\end{itemize} 
\mk
\item[$\bullet$] in both cases ({\it i.e.}\,regardless of the parity of $n$), the $(n-1)$-rank of $  \boldsymbol{\mathcal W}_{P} \simeq \boldsymbol{\mathcal W}_{\mathcal M_{0,n+3}}$ is equal to $(n+1)(n+2)/2$ hence this web has maximal rank.
\end{itemize}
One of the key ingredients to obtain the results above is the action of the Weyl group $W(A_{n+2})$ on the space of ARs of the web under scrutiny. All this makes  of the 
$\boldsymbol{\mathcal W}_{\mathcal M_{0,n+3}}$'s natural and meaningful generalisations of the del Pezzo conical web $\boldsymbol{\mathcal W}_{{\rm dP}_5}$, especially when $n$ is even.

\subsubsection{The curvilinear webs $\boldsymbol{\mathcal W}_P$ when $N=n+3$}
Considering the previous discussion about the case when $N=n+2$, one can wonder about the case when $N+3$, and more precisely whether the $\boldsymbol{\mathcal W}_P$'s would not satisfy natural generalizations of (some of ) the nice properties 
enjoyed by $\boldsymbol{\mathcal W}_{{\rm dP}_4}$ listed in \S\ref{S:Web-WdP4-properties}. 
\sk

The setting for any $n\geq 2$ appears as a natural generalisation of that of ${\rm dP}_4$ (which corresponds to the case when $n=2$).  As in the previous case, 
setting $X_P={\bf Bl}_P(\mathbf P^n)$, the $(-1)$-divisors give rise to a 
root system, now of Dynkin type $D_{n+3}$, in the orthogonal of the canonical class in 
 ${\rm Pic}(X_P)\simeq \mathbf Z^{n+4}$. Moreover, there is a natural action of the Weyl group $W(D_{n+3})$ on  the Picard lattice of 
  $X_P$ (see \cite{CT} and more specifically 
\cite{SturmfelsVelasco}).  It is then natural to ask the following questions: 
\begin{itemize}
\item[1.] {\it Does the Weyl group $W(D_{n+3})$ act naturally on the space $ \boldsymbol{AR}^{(n-1)}
\big( \boldsymbol{\mathcal W}_{P} \big)$ of $(n-1)$-ARs of 
$\boldsymbol{\mathcal W}_{P}$ (or otherwise on a certain subspace of it) ?} 
\sk
\item[2.] {\it If the answer to the first question is affirmative, what is the decomposition 
of 
$ \boldsymbol{AR}^{(n-1)}
\big( \boldsymbol{\mathcal W}_{P} \big)$
into irreducible $W(D_{n+3})$-representations?}
\sk
\item[3.] {\it What is the dimension of $ \boldsymbol{AR}^{(n-1)}
\big( \boldsymbol{\mathcal W}_{P} \big)$ ? Does this web has maximal $(n-1)$-th rank ?}
\sk
\end{itemize}

We do not have answers to these questions for $n\geq 3$ yet.  

In this case (namely $N=n+3$) as well, there might exist a dichotomy depending on the parity of $n$. This is suggested by the very nice results of \cite{AC}, which concern the case when $n$ is even.  In this case, we set $m=n/2\geq 1$ and consider   a projective parametrization  $\Lambda: \mathbf P^1\rightarrow \mathbf P^n$  of the unique degree $n$ rational normal curve passing  through $p_1,\ldots,p_{n+3}$. 
Let $\mathbf C=\mathbf P^1\setminus \{\infty \}$ be an affine chart 
such that $P\subset \Lambda(\mathbf C)$ and set $\lambda_i=\Lambda^{-1}(p_i)\in \mathbf C$ for $i=1,\ldots,n+3$. Let $Z$ be the complete intersection 
in $\mathbf P^{n+2}$ of the two quadrics cut out by the equations $\sum_{i=1}^{n+3} x_i^2=0$ and $\sum_{i=1}^{n+3} \lambda_i \, x_i^2=0$ and consider the variety $G=F_{m-1}(Z)\subset G_{m-1}\big( \mathbf P^{n+2}\big)$ of $(m-1)$-planes contained in $Z$.   

In \cite{AC}, the following statements have been proved to be true: 
\begin{enumerate}
\item[$(i).$] there are pseudo-isomorphisms $f: G\dashrightarrow X_P$, that is birational maps inducing isomorphisms in codimension 1 (and there exist precisely $2^{n+2}$ such pseudo-isomorphisms); 
\sk
\item[$(ii).$] $G$ contains $2^{n+2}$ subspaces of dimension $m$ which are linear (that is give rise to $m$-planes in the image) relatively to the 
Pl\"ucker embedding 
 $G\hookrightarrow  \mathbf P\big(  \wedge^m \mathbf C^{n+3}\big)$;
\sk
\item[$(iii).$] given a pseudo-isomorphism $f:  G\dashrightarrow X_P$, the pull-back 
$f^*\big( \boldsymbol{\mathcal W}_{P} \big)$  admits as first integrals
$2(n+3)$ morphisms onto rational varieties $\varphi_i: G\rightarrow T_{\varphi_i}$  and $\psi_i: G\rightarrow T_{\psi_i}$ (with $i=1,\ldots,n+3$) which all satisfy the following properties: 
\begin{itemize}
\item[$-$] any smooth fiber is isomorphic to $\mathbf P^1$; 

\item[$-$] there are exactly  $2^{n}$ singular fibers, each of which has two irreducible components, namely two 
$m$-planes in $G$ ({\it cf.}\,$(ii)$ above) intersecting transversely in one point. 
\end{itemize}
\sk
\end{enumerate}

All the properties above together 
are generalizations in higher but even dimension of the properties of ${\rm dP}_4$ which were useful regarding  the identity ${\bf HLog}^3$.  As the identity ${\bf HLog}^2\simeq \boldsymbol{\big(\mathcal Ab\big)}$ admits a generalization in any even dimension (the so-called Euler's abelian relation $(\boldsymbol{\mathcal E}_n)$, see \cite{PirioM0n+3} and footnote \footref{foofoo} above), we find natural to ask the following: 
\begin{itemize}
\item[4.] {\it Can ${\bf HLog}^3$ be generalized to the case of 
$\boldsymbol{\mathcal W}_{P}$ when $N=n+3$ is odd? 
More precisely, in this case, does 
this web admit a $(n-1)$-AR  whose components are weight 2 hyperlogarithms? If yes, does this AR  corresponds to a symbolic tensorial identity which 
is stable up to sign under the action of $W(D_{n+3})$ (hence 
spans 
 the $W(D_{n+3})$-signature representation)?} 
\sk
\end{itemize}

\vspace{-0.6cm}
\subsection{\bf Curvilinear webs on Mori dream spaces.}
The considerations above lead us to notice that any Mori dream space (ab.\,MDS) 
naturally carries a curvilinear web (by rational curves?). One can ask about the properties of such webs. 

Among the simplest examples of non Fano Mori dream spaces obtained by blowing-up projective spaces, one can mention the following ones:

\begin{itemize}
\item  the blow-up of $\mathbf P^2$ at nine points lying on a smooth conic;
\sk
\item the blow-up of $\mathbf P^3$ at seven points in general position; 
\sk
\item  the Losev-Manin space obtained by  two successive blow-ups: first  four points in general position in $\mathbf P^3$, then  the  strict transforms of the six lines passing through two of the considered points. 
\end{itemize}
As references about these examples, see \cite{CT} for the first two and \cite[\S8]{Castravet} for the third. 

Let $X$ be one of the three MDS above and let $\boldsymbol{\mathcal W}_X$ be a natural web by rational  curves on it.\footnote{We are quite vague about what has to be considered as `natural' here. These are  the webs on $X$ the foliations of which are families of (necessarily rational?) curves corresponding to generators of extremal classes in the cone of movable classes on $X$.
We hope to come back to this  with more details in a future work.} 
For any $k=0,\ldots,\dim(X)-1$, one can ask the following question:
\begin{itemize}
\item[$\bullet$] {\it What can be said about the  $k$-abelian relations of 
$\boldsymbol{\mathcal W}_{X}$? Does this web carry hyperlogarithmic or even better, polylogarithmic, $k$-ARs?} 
\end{itemize}
Curvilinear webs on Fano manifolds or more generally on MDS may be very interesting webs. It is our opinion  that they would deserve being investigated more in depth.

\section*{\bf Appendix: the structure of $\mathbf C^{{\mathcal L}_r}$ as a $W(E_r)$-module}
\label{Appendix}
In this appendix we briefly explain the computations we used to establish some representation-theoretic facts about the 
$W(E_r)$-module $\mathbf C^{{\mathcal L}_r}$. 
These computations were 
mainly performed using the software GAP3 \cite{GAP3} which has the advantage of being available online in CoCalc.\footnote{One can run GAP3 
by typing the command {\ttfamily  /ext/bin/gap3} in a \href{https://cocalc.com/features/terminal}{CoCalc terminal window}.}  
For the sake of clarity and conciseness, we will essentially discuss the case $r=5$, as all the other ones can be treated in a completely analogous way.

\subsection*{The case $r=5$}
Our aim here is to explain how, using GAP,  it is easy  
 to determine the character $\chi_5$ of the $W(D_5)$-module $\mathbf C^{{\mathcal L}_5}$ explicitly and then to perform the computations used in Section \S\ref{S:Main}.  
\sk

\vspace{-0.4cm}
First, in order to define $W(D_5)$ and its character table in GAP, we run the following commands: 

\begin{lstlisting}[language=GAP]
gap> WD5:=CoxeterGroup("D",5);;
gap> TableWD5:=CharTable(WD5);
gap> Display(TableWD5);
\end{lstlisting}

From the outcome of the last command, one can extract the  character \ref{Table:CharTableWD5} (in which the dot $\cdot$ is used instead of $0$ to make its reading easier). 

\begin{table}[!h]
\resizebox{5.8in}{1.5in}
{
\begin{tabular}{|l||c|c|c|c|c|c|c|c|c|c|c|c|c|c|c|c|c|c|}
\hline
         & \scalebox{0.8}{$\boldsymbol{\big(1^5.\big)}$} &  \scalebox{0.8}{$\boldsymbol{(1^3.1^2)}$}  & 
          \scalebox{0.8}{$\boldsymbol{\big(1.1^4\big)}$ } & 
       \scalebox{0.8}{   $\boldsymbol{\big(21^3.\big)}$ } & 
       \scalebox{0.8}{   $\boldsymbol{\big(1^2.21\big)}$} & 
       \scalebox{0.8}{   $\boldsymbol{\big(21.1^2\big)}$} & 
       \scalebox{0.8}{   $\boldsymbol{\big(.21^3\big)}$ } & 
      \scalebox{0.8}{    $\boldsymbol{\big(221.\big)}$}  &
      \scalebox{0.8}{    $ \boldsymbol{\big(1.22\big)}$ } &  
     \scalebox{0.8}{     $\boldsymbol{\big(2.21\big)}$} &  
     \scalebox{0.8}{     $\boldsymbol{\big(311.\big)}$ }  & 
     \scalebox{0.8}{     $\boldsymbol{\big(1.31\big)}$} & 
      \scalebox{0.8}{    $\boldsymbol{\big(3.11\big)}$}  & 
     \scalebox{0.8}{     $\boldsymbol{\big(32.\big)}$}  & 
    \scalebox{0.8}{      $\boldsymbol{\big(.32\big)}$} & 
    \scalebox{0.8}{      $\boldsymbol{\big(41.\big)}$} & 
    \scalebox{0.8}{      $\boldsymbol{\big(.41\big)}$} &   
    \scalebox{0.8}{      $ \boldsymbol{\big(5.\big)}$}
  \\
      \hline \hline 
$\boldsymbol{[1^2.1^3]}$     &   10   &  -2   &   2  &   -4   &   2   &   .  &   -2  &    2  &   -2&  .    &  1 &    -1  &    1  &  -1  &   1  &  .  &  .   &   . \\
 \hline
$\boldsymbol{[1.1^4]}$    &     5    &  1  &   -3  &   -3   &  -1   &   1   &   3   &   1   &   1&   -1    &  2  &    .     &-2   &  .  &   .  & -1 &   1   &   .   \\ \hline
$\boldsymbol{[.1^5 ]}$   &     1  &    1  &    1   &  -1  &   -1  &   -1  &   -1   &   1  &    1 & 1 &     1   &   1   &   1    &-1   & -1  & -1  & -1   &   1 \\ \hline
$\boldsymbol{[1^3.2]}$    &     10 &    -2   &   2   &  -2   &   .    &  2 &    -4  &   -2  &    2  &  .  &    1  &   -1  &    1   &  1  &  -1  &  .  &  .  &    .
\\ \hline
$\boldsymbol{[1^2.21]}$    &     20   &  -4    &  4   &  -2    &  2  &   -2    &  2   &   .   &   .  &  .   &  -1   &   1  &   -1  &   1  &  -1  &  .  &  .  &    .
\\ \hline
$\boldsymbol{[1.21^2]}$    &     15  &    3  &   -9    & -3    & -1   &   1    &  3  &   -1   &  -1  & 1  &    .   &   .   &   .   &  . &    .  &  1 &  -1  &    .
\\ \hline
$\boldsymbol{\big[.21^3\big]}$    &      4  &    4   &   4  &   -2  &   -2  &   -2  &   -2 &     .   &   .   &   .   &   1    &  1      & 1   &  1   &  1  &  . &   .    & -1 
\\ \hline
$\boldsymbol{\big[1.2^2\big]}$      &    10  &    2   &  -6   &   .   &   .     & .  &    .   &   2    &  2   &   -2 &    -2   &   . &     2   &  . &    . &   . &   . &     .
\\ \hline
$\boldsymbol{\big[2.21\big]}$      &    20  &   -4  &    4   &   2  &   -2  &    2   &  -2 &     .  &    .  &  .   &  -1   &   1   &  -1  &  -1    & 1  &  .  &  .   &   . \\
 \hline
$\boldsymbol{\big[.2^21\big]}$    &       5  &    5    &  5  &   -1  &   -1   &  -1  &   -1   &   1   &   1 &    1  &   -1  &   -1   &  -1  &  -1   & -1  &  1  &  1  &    . \\
 \hline
$\boldsymbol{\big[1^2.3\big]}$   &      10  &   -2  &    2   &   2   &   .  &   -2  &    4  &   -2  &    2&    .  &    1   &  -1   &   1 &   -1  &   1  &  . &   .  &    .
\\
 \hline
$\boldsymbol{\big[1.31\big]}$     &     15   &   3   &  -9   &   3   &   1  &   -1  &   -3   &  -1  &   -1&1   &   .   &   .    &  .    & .   &  . &  -1 &   1  &    . 
\\
 \hline
$\boldsymbol{\big[.31^2\big]}$      &     6  &    6  &    6   &   .    &  .    &  .   &   .  &   -2   &  -2&    -2   &   .   &   .    &  .  &   .  &   .   & . &   .  &    1
\\
 \hline
$\boldsymbol{\big[2.3\big]}$       &    10  &   -2  &    2   &   4   &  -2  &    .  &    2  &    2   &  -2&  .   &   1  &   -1   &   1   &  1 &   -1   & .  &  .   &   .
\\
 \hline
$\boldsymbol{\big[.32\big]}$    &        5   &   5  &    5   &   1   &   1    &  1   &   1   &   1  &    1&      1   &  -1  &   -1 &    -1  &   1  &   1 &  -1  & -1  &    .
\\ \hline
$\boldsymbol{\big[1.4\big]}$     &       5   &   1  &   -3    &  3   &   1  &   -1   &  -3   &   1  &    1&  -1   &   2   &   .     &-2  &   .  &   .  &  1 &  -1  &    .
\\ \hline
$\boldsymbol{\big[.41\big]}$   &         4  &    4  &    4    &  2    &  2   &   2  &    2  &    .    &  .&  .    &  1  &    1   &   1  &  -1  &  -1  &  .  &  .  &   -1
\\ \hline
$\boldsymbol{\big[.5\big]}$     &        1   &   1   &   1    &  1   &   1     & 1     & 1     & 1   &   1&
 1   &   1  &    1  &    1 &    1     &1   & 1  &  1    &  1
 \\ \hline 
\end{tabular}}
\bk
\caption{Character table of the Weyl group of type $D_5$.}
\label{Table:CharTableWD5}
\end{table}

A few explanation are in order regarding this table. As is well-known ({\it e.g.} see \cite[\S5.6]{GP}), for any odd integer $n\geq 4$,\footnote{The case when $n$ is even is a bit more involved but fully understood as well.} the characters as well as the conjugacy classes of the Weyl group of type $D_n$ can be labelled by means of the `bipartitions  of $n$', that is pairs $(\lambda,\lambda')$ of partitions such that 
$\lvert \lambda\lvert+\lvert \lambda'\lvert=n$. 
In Table \ref{Table:CharTableWD5},  we use  
$[\lambda.\lambda']$ for labelling the line associated to the corresponding character  while 
$(\lambda.\lambda')$ serves to label the column associated to the corresponding conjugacy class. When $\lambda $ is the empty partition, we just write $[.\lambda']$  and $(.\lambda')$ respectively and similarly when $\lambda'=\emptyset$. 

In view of determining $\chi_5$ as explained in the proof of Proposition \ref{Prop:Key}, we need to construct explicit representatives of the conjugacy classes in $W(D_5)$. This can be achieved in GAP3 by means of the following command: 
\begin{lstlisting}[language=GAP]
gap> List(ConjugacyClasses(WD5),x->CoxeterWord(WD5,Representative(x)));
\end{lstlisting}
%
%
which returns the following string of 18 sequences: 
{\begin{lstlisting}[language=GAP]
[ [  ], [ 1, 2 ], [ 1, 2, 3, 1, 2, 3, 4, 3, 1, 2, 3, 4 ], [ 1 ], [ 1, 2, 3 ], 
  [ 1, 2, 4 ], [ 1, 2, 3, 1, 2, 3, 4, 3, 1, 2, 3, 4, 5 ], [ 1, 4 ], 
  [ 1, 3, 1, 2, 3, 4 ], [ 1, 2, 3, 5 ], [ 1, 3 ], [ 1, 2, 3, 4 ], 
  [ 1, 2, 4, 5 ], [ 1, 3, 5 ], [ 1, 3, 1, 2, 3, 4, 5 ], [ 1, 4, 3 ], 
  [ 1, 2, 3, 4, 5 ], [ 1, 4, 3, 5 ] ]
\end{lstlisting}}
The $k$-th element of the above list corresponds to an explicit representative of the $W(D_5)$-conjugacy class associated to the $k$-th column of Table \ref{Table:CharTableWD5} in terms of the generators of {\ttfamily  WD5} the software GAP is dealing with. More precisely, if this string is $[i_1,\ldots,i_m]$, the  corresponding representative is $\zeta_{i_1}\cdots \zeta_{i_m}$, where $\zeta_1,\ldots,\zeta_5$ stand for the generators of  {\ttfamily  WD5} implicitly defined when the latter group was defined (by using the GAP command {\ttfamily CoxeterGroup("D",5)}. \sk

In view of relating GAP generators $\zeta_i$'s with the $s_i$'s considered in the proof of 
Proposition \ref{Prop:Key}, one makes explicit the labelling implemented in GAP of the vertices of the Dynkin diagram $D_5$ via the command 
{\ttfamily PrintDiagram(CoxeterGroup("D",5))} which returns the  diagram 
$$
  \xymatrix@R=0.1cm@C=0.4cm{ 
  1\ar@{-}[rd]   & &   &
 \\
  & 3  \ar@{-}[r] & 4   \ar@{-}[r]  & 5\, . \\
   2\ar@{-}[ru]   & &   &
  }
$$
Comparing it with the diagram for $D_5$ in Figure \ref{Fig:DynkinDiagram}, one can easily  express GAP generators of {\ttfamily WD5} in terms of those we considered in 
Section \S\ref{S:Main}:\footnote{Several  (two in case of $D_5$) equivalent choices are possible for expressing the $\zeta_i$'s in terms of the $s_i$'s;  all give rise to the same results of course.}  
one has 
$$
\zeta_1=s_4\, , \qquad \zeta_2=s_5
\, , \qquad \zeta_3=s_3
\, , \qquad \zeta_4=s_2
\qquad \mbox{ and } 
 \qquad \zeta_5=s_1\, .
$$
We denote by $\sigma$ the permutation such that $\zeta_{i}=s_{\sigma(i)}$ for $i=1,\ldots,5$. 

At this point, it is straightforward to compute the value of the character $\chi_5$ evaluated on any 
conjugacy class. If $[i_1,\ldots,i_m]$ stands for the string given by GAP which encodes a representative of a conjugacy class 
$\Gamma\subset W(D_5)$, then one has 
$$
\chi_5(\Gamma)={\rm Tr}\Big(  M_{\sigma(i_1)}\cdots  M_{\sigma(i_m)}
\Big) 
$$
where the $M_i$'s $(i=1,\ldots,5$) stand for the $16\times 16$ matrices defined in \eqref{Eq:Mi}. 
   Since these matrices can be computed explicitly, its is straighforward to compute the trace above hence to build the 18-tuple of values corresponding to the character 
$ \chi_5$ (for the same labelling as the one used to label the column of the character table above). Identifying $ \chi_5$ with this 18-tuple, one has 
   $$ \chi_5=\big(16, 0, 0, 8, 0, 0, 0, 4, 0, 0, 4, 0, 0, 2, 0, 2, 0, 1\big).$$
Elementary linear algebra computations give us that 
$$ \chi_5= \boldsymbol{\big[.5\big]}+ 
\boldsymbol{\big[1.4\big]}+
\boldsymbol{\big[2.3\big]}   $$ 
where we use their labels (of the form $\boldsymbol{\big[\lambda.\lambda'\big]}$) for denoting the correspond line in the character table. Since  $\boldsymbol{\big[.5\big]}={\bf 1}$, this gives us the decomposition for $\mathbf C^{{\mathcal L}_5}$ in \eqref{Eq:Decomposition-Wr}.

Once $\chi_5$ is explicitly known, it is not difficult to compute the character of 
$\wedge^3 \mathbf C^{{\mathcal L}_5}$, denoted by $\wedge^3\chi_5$, thanks to the following formula which holds true for any element $g$ of $W(D_5)$: 
\begin{equation}
\wedge^3\hspace{-0.1cm}\chi_5\big(g\big)=
\frac{1}{3!}\, \bigg(\, \chi_5\big(g\big)^3-3 \,\chi_5\Big(g^2\Big) \,\chi_5\big(g\big)+2 \, \chi\Big(g^3\Big)
\, 
\bigg)\, .
\end{equation}

Using the same notations and conventions as above, we then get that 
$$
\wedge^3\hspace{-0.1cm}\chi_5=\big( 560, 0, 0, 24, 0, 0, 0, -20, 0, 0, 8, 0, 0, 0, 0, -2, 0, 0 \Big)\, .$$
Denoting now by $\chi^{s}$ the character of $W(D_5)$ corresponding to the $s$-th line of Table \ref{Table:CharTableWD5}, we have $\wedge^3 \chi_5=\sum_{s=1}^{18} w_{s}\,\,  \chi^s$  with 
$$
 \big(  w_{s}\big)_{s=1}^{18}= \big(1, 1, 0, 4, 5, 4, 1, 1, 6, 0, 5, 6, 3, 3, 1, 2, 2, 0\big)\, . 
$$
%
%

In particular, since $w^3=0$ we get that $\chi^{3}=\boldsymbol{[.1^5]}={\bf sign}$ does not appear in the decomposition of 
$\wedge^3 \mathbf C^{{\mathcal L}_5}$ into $W(D_5)$-irreducibles, which is the content  of \eqref{m-signa} in the case under scrutiny. 
\mk 

All the computations above can be performed quite easily within GAP.  First one constructs  $ \chi_5$ and  $\wedge^3 \chi_5$ as characters of $W(D_5)$ by means of the following two commands: 
\begin{lstlisting}[language=GAP]
gap>chi5:=TableWD5.irreducibles[14]+TableWD5.irreducibles[16]
                                         +TableWD5.irreducibles[18];;
gap>Wedge3chi5:=AntiSymmetricParts(TableWD5,[chi5],3); 
\end{lstlisting}
And one computes the multiplicity of the signature representation 
as an irreducible component of 
 $\wedge^3 \mathbf C^{{\mathcal L}_5}$ by computing the scalar product 
\begin{lstlisting}[language=GAP]
gap>ScalarProduct(TableWD5,Wedge3Chi5[1],TableWD5.irreducibles[3]);
\end{lstlisting}
which turns out to be 0 as instantly returned by GAP.

\subsection*{A few words about the other cases}
All that has been discussed above can be adapted quite straightforwardly for any other  
$r\in \{4,\ldots,8\}$. Once $\chi_r$ has been determined in terms of the character table of the associated Weyl group $W(E_r)$, 
one computes its $(r-2)$-th wedge product either using the general determinental formula 
expressing  $\wedge^{r-2} \chi_r$ in terms of $\chi_r$\footnote{This classical formula can found 
in completely explicit form in 
\cite[Exercise 2.9.13.(a)]{GR}.} or the GAP command {\ttfamily AntiSymmetricParts}.  Then the multiplicity $m_{r}^{\bf sign}$ of ${\bf sign}$ in the decomposition of $\wedge^{r-2} \chi_r$ into irreducibles is given by their scalar product which we compute in GAP using {\ttfamily ScalarProduct}.\mk 

We obtain that ${\bf sign}$ does not appears in the decomposition of $\wedge^{r-2} \chi_r$ for $r=4,\ldots,7$, while its multiplicity $m_{8}^{\bf sign}$ as a component of $\wedge^{6} \chi_8$ can be computed to be 5.


\bigskip\bigskip

\vfill 
{\small  ${}^{}$ \hspace{-0.6cm} {\bf Luc Pirio} (\href{mailto:luc.pirio@uvsq.fr}{\tt  luc.pirio@uvsq.fr})\\
Laboratoire de Math\'ematiques de Versailles\\
 Universit\'e Paris-Saclay, UVSQ \& CNRS  (UMR 8100)

\end{document}